%% file: main.tex
\documentclass[12pt, reqno]{amsart}
\usepackage{xcolor}
\input{preamble}

\title{Rank stability of elliptic curves over $\mathbb{F}_q(t)$ in residue classes}
\author{Steve Fan}
\address{Steve Fan, Department of Mathematics\\ University of Georgia\\ Athens, GA 30602}
\email{Steve.Fan@uga.edu}
\author{Sun Woo Park}
\address{Sun Woo Park, Max Planck Institute for Mathematics,  Vivatsgasse 7, 53111 Bonn, Germany}
\email{s.park@mpim-bonn.mpg.de}
\begin{document}

\begin{abstract}
Fix a prime $\ell$. Let $K = \mathbb{F}_q(t)$ be a global function field such that $\gcd(q,6) = 1$ and $q \equiv 1 \pmod \ell$. Let $E$ be a non-isotrivial elliptic curve over $K$. Given a fixed monic polynomial $Q$ over $\mathbb{F}_q$, and assuming some mild conditions on $E$, we show that the rank of $E$ does not change with respect to a positive proportion of $\mathbb{Z}/\ell \mathbb{Z}$ extensions $K(\sqrt[\ell]{f})/K$, as $f$ varies over the set of monic polynomials over $\mathbb{F}_q$ such that $f \equiv A \pmod Q$ for any given polynomial $A$. We obtain this result by combining analytic and probabilistic techniques to study the distribution of certain prime Selmer groups of auxiliary abelian varieties constructed from these polynomials $f$.
\end{abstract}
\maketitle
% \tableofcontents

\section{Introduction}

Let $E$ be an elliptic curve over a global field $K$. By the Mordell--Weil theorem, its set of $K$-rational points, denoted by $E(K)$, is a finitely generated abelian group. As a consequence, we have an isomorphism $E(K) \cong \mathbb{Z}^{\oplus r} \oplus T$, where $T$ is a finite abelian group. We call $r$ the rank of $E(K)$ and write $r=\mathrm{rank}(E/K)$.

Given a field extension $L/K$, it is natural to ask how the rank of the elliptic curve $E$ varies with respect to base change from $K$ to $L$. In particular, one can ask for the distribution of the quantity $\mathrm{rank}(E/L) - \mathrm{rank}(E/K)$ as $L$ varies over a family $\mathcal{F}$ of some field extensions of $K$. Even the case when $\mathcal{F}$ is a family of quadratic extensions over $K$ seems highly non-trivial. Goldfeld's conjecture (or the minimalist conjecture) predicts the following behavior of differences in ranks:
\[
    \lim_{X \to \infty} \mathbb{P}[\mathrm{rank}(E/L) - \mathrm{rank}(E/K) = j : \mathrm{Gal}(L/K) \cong \mathbb{Z}/2\mathbb{Z}, \mathrm{Disc}(L) \leq X] = \begin{cases}
        \frac{1}{2} &\text{ if } j = 0, 1, \\
        0 &\text{ otherwise}.
    \end{cases}
\]
Significant progress has been made in supporting this conjecture. The state-of-the-art works include recent series of published papers by Alexander Smith \cite{Sm22_01, Sm22_02} when $K$ is a number field and those by Jordan Ellenberg, Aaron Landesman, and Ishan Levy \cite{EL23, LL25} when $K$ is a global function field. Together, they show that in both settings the Birch and Swinnerton--Dyer conjecture implies Goldfeld's conjecture, modulo some mild technical conditions on the elliptic curve $E$.

The statistics of rank growths of elliptic curves over \textit{proper subsets} of field extensions over $K$ also has many interesting applications in number theory. Research on this topic has mostly focused on the case where $K$ is a number field. One such example concerns the collection of field extensions of form $\mathbb{Q}(\sqrt{f(x_1, x_2 ,\cdots, x_n)})$, where $f$ is some multivariate polynomial over $\mathbb{Z}$, and each $x_k$ vary over some interval $I_k \subseteq \mathbb{Q}$. The recent breakthrough made by Peter Koymans and Carlo Pagano \cite{KP27} shows that Hilbert's 10th problem over number rings has a negative answer, by constructing elliptic curves $E: f(a)y^2 = f(x)$ over any number field $K$ and some $a \in K$ such that its algebraic rank over $K(\sqrt{-1})$ and $K$ are identical and positive. Another relevant work of theirs, jointly with Efthymios Sofos \cite{KPS24}, concerns boundedness of exponential moments of ranks of specializations of elliptic fibrations over $\mathbb{A}^n_{\mathbb{Q}}$. In addition, recent work by Peter Koymans and Alex Smith \cite{KS26} provides criteria for boundedness of moments of Selmer groups of quadratic twists of abelian varieties using Tamagawa ratios. Finally, the work by Alex Bartel and Adam Morgan \cite{BM25} considers rank growths of Jacobians of hyperelliptic curves over proper subsets of quadratic extensions over $\mathbb{Q}$ which satisfy a predetermined set of Chebotarev conditions.

In the spirit of these previous studies, we study rank growths of elliptic curves over a certain subset of cyclic prime Galois extensions over a global function field satisfying some congruence relations. Fix a prime number $\ell$ and a global function field $K = \mathbb{F}_q(t)$ with a characteristic coprime to $6$, satisfying $q \equiv 1 \pmod \ell$. Let $Q$ be a polynomial over $\mathbb{F}_q$, and let $E$ be a non-isotrivial elliptic curve over $K$, i.e., its $j$-invariant is a non-constant rational function in $t$. Our goal is to understand how the rank of $E$ varies with respect to base change over \textit{subsets} of $\mathbb{Z}/\ell \mathbb{Z}$ Galois extensions of the form $K(\sqrt[\ell]{f})/K$ as $f$ varies within a residue class modulo $Q$. We prove the following rank stability result under some mild assumptions on $E$.
\begin{theorem} \label{theorem:main1}
    Let $\ell$ be a prime number. Let $K = \mathbb{F}_q(t)$ be a global function field of characteristic coprime to $6$ such that $q \equiv 1 \pmod \ell$. Suppose $E/K$ is an elliptic curve satisfying the following three conditions.
    \begin{itemize}
        \item $E/K$ is a non-isotrivial elliptic curve.
        \item $\mathrm{Gal}(K(E[\ell])/K) \cong \mathrm{SL}_2(\mathbb{F}_\ell)$.
        \item $E/K$ has a place of split multiplicative reduction.
    \end{itemize}
    Let $Q$ be a fixed monic polynomial over $\mathbb{F}_q$, and let $A$ be any polynomial over $\mathbb{F}_q$. Denote by $\mathcal{M}(n;Q,A)$ the set of monic polynomials $f$ of degree $n$ over $\mathbb{F}_q$ such that $f \equiv A \pmod Q$. Then 
    \[
        \liminf_{n \to \infty} \frac{\#\{f \in \mathcal{M}(n;Q,A) : \mathrm{rank}(E/K(\sqrt[\ell]{f})) = \mathrm{rank}(E/K) \}}{\# \mathcal{M}(n;Q,A)} \geq \prod_{i=0}^\infty \frac{1}{1+\ell^{-i}}.
    \]
\end{theorem}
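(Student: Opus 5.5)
The plan is to reduce the rank statement to a Selmer statement for an auxiliary abelian variety, and then show that the relevant Selmer group is trivial with at least the stated probability.

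\textbf{Step 1: the auxiliary abelian variety.} For $L_f=K(\sqrt[\ell]{f})$ we have
\[
\mathrm{rank}(E/L_f)-\mathrm{rank}(E/K)=\mathrm{rank}\, A_f(K),
\]
where $A_f$ is the $(\ell-1)$-dimensional twist of $E^{\ell-1}$ cut out by the restriction of scalars. Concretely, $A_f$ is the kernel of the trace map $\mathrm{Res}_{L_f/K}E\to E$. It carries an action of $\mathbb{Z}[\zeta_\ell]$, and the prime $\pi=1-\zeta_\ell$ satisfies $A_f[\pi]\cong E[\ell]$ as Galois modules. Since $\mathrm{rank}\,A_f(K)\le \dim_{\mathbb{F}_\ell}\mathrm{Sel}_\pi(A_f/K)$ up to a bounded torsion contribution, it suffices to prove
\[
\liminf_{n\to\infty}\mathbb{P}\bigl[\mathrm{Sel}_\pi(A_f/K)=0 : f\in\mathcal{M}(n;Q,A)\bigr]\ge \prod_{i\ge0}(1+\ell^{-i})^{-1}.
\]
The torsion contribution is controlled as follows. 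The hypothesis $\mathrm{Gal}(K(E[\ell])/K)\cong\mathrm{SL}_2(\mathbb{F}_\ell)$ gives $E[\ell](K)=0$, and more generally $A_f[\pi](K)=0$. The right-hand side is exactly the Poonen--Rains/Cohen--Lenstra-type probability that a random alternating or orthogonal-type Selmer structure is trivial.

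\textbf{Step 2: local conditions as $f$ varies.} The local conditions defining $\mathrm{Sel}_\pi(A_f)$ inside $H^1(K,E[\ell])$ agree with those of $E$ at places not dividing $f$ or the conductor of $E$. At a prime $p\mid f$, the twist is totally ramified, and the local condition is computed from $E[\ell]$ over $K_p$. Its shape depends on the Frobenius class of $p$ in $\mathrm{Gal}(K(E[\ell])/K)$ and on $\mathrm{ord}_p f \bmod \ell$.

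I would then set up a Markov-chain model on Selmer ranks, in the style of Swinnerton-Dyer, Klagsbrun--Mazur--Rubin and Kane. In this model one adds prime factors of $f$ one at a time and tracks the Selmer rank. The transition probabilities are governed by Chebotarev densities in $K(E[\ell],\mu_\ell)$, which are computable using $\mathrm{SL}_2(\mathbb{F}_\ell)$ and $q\equiv1\pmod\ell$. The split multiplicative place is used to ensure that the parity or Tamagawa-ratio contribution does not force positive rank. Concretely, it lets us vary the root number and local Tamagawa ratio so that both parities of the Selmer rank occur with the right frequency; one then passes to the lower bound by keeping only the even-parity part.

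\textbf{Step 3: equidistribution in residue classes.} I need the Chebotarev conditions on the prime factors of $f$ to be independent of the congruence $f\equiv A\pmod Q$ as $n\to\infty$. I would prove this analytically. Write the indicator of the residue class using Dirichlet characters modulo $Q$, and count polynomials with prescribed factorization types and Frobenius data using the Riemann Hypothesis for $L$-functions over $\mathbb{F}_q(t)$ twisted by these characters. The point is that $K(E[\ell])/K$ is geometric of large conductor and linearly disjoint from the ray class fields modulo $Q$, apart from constant-field issues that are handled by $q\equiv1\pmod\ell$. Hence the character sums are negligible and the Erdős--Kac-type statistics of $\omega(f)$ persist in residue classes.

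\textbf{Step 4: the expected main obstacle.} The hardest step is making the Markov chain converge on an honest density. The number of prime factors of $f$ is only about $\log n$, so one needs enough Chebotarev-generic primes, with an effective error term. One must also show that the stationary distribution of the chain assigns mass at least $\prod_{i\ge0}(1+\ell^{-i})^{-1}$ to the zero state. My first attempt would be a Smith/Kane-style argument: condition on the factorization type, apply a large-sieve bound to show that the Legendre-symbol (here $\ell$-th power residue) matrices governing the global duality are equidistributed, and then apply the known limiting distribution of kernels of random matrices over $\mathbb{F}_\ell$. The $\liminf$ is taken to avoid having to control the upper tail.
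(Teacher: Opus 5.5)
Your outline has the paper's skeleton: the twist $E^{\chi_f}$, its $1-\sigma_f$ Selmer group, a Klagsbrun--Mazur--Rubin chain, Chebotarev over the compositum with $K_Q$, and Tur\'an-type control of $\omega$ in progressions. However, the mechanism that actually produces $\prod_{i\ge0}(1+\ell^{-i})^{-1}$ is misdescribed.

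\textbf{Transition probabilities.} These are not Chebotarev densities in $K(E[\ell])$. Whether a new prime $\mathfrak v\in\mathcal P_1$ lowers the current rank $r$ (probability $1-\ell^{-r}$) or raises it depends on whether the current Selmer classes localize nontrivially at $\mathfrak v$. That is a Frobenius condition in the governing field $F_{\sigma,\chi}$, where $\mathrm{Gal}(F_{\sigma,\chi}/K(E[\ell]))\cong E[\ell]^{r}$, and here it must be applied in $F_{\sigma,\chi}K_Q$ (Proposition~\ref{prop:tchiv_density}).

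\textbf{Split multiplicative place and parity.} The split multiplicative place is used precisely to show that $F_{\sigma,\chi}$ has constant field $\mathbb F_q$, so that effective Chebotarev applies to primes of a fixed degree. It plays no role in varying root numbers, and inside a fixed class modulo $Q$ you could not vary local data at places dividing $Q$ anyway. No parity bookkeeping is needed. The densities $1-\frac{\ell}{\ell^2-1},\frac1\ell,\frac1{\ell^3-\ell}$ of $\mathcal P_0,\mathcal P_1,\mathcal P_2$ assemble into $M=(1-\frac{\ell}{\ell^2-1})I+\frac1\ell M_L+\frac1{\ell^3-\ell}M_L^2$. This operator is aperiodic with stationary law $\rho$ on both parities, and Lemma~\ref{corollary:uniform_markov} over $w'\asymp\log n$ steps gives the limit. ``Keeping only the even part'' returns $\rho(0)$ only after you prove parity is equidistributed in the class, which you do not do.

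\textbf{Step 4.} The tool you propose is unavailable here. A Kane-type description of the Selmer group as the kernel of a matrix of power-residue symbols needs rational structure on $E[\ell]$ (e.g.\ full rational $2$-torsion), which $\mathrm{Gal}(K(E[\ell])/K)\cong\mathrm{SL}_2(\mathbb F_\ell)$ excludes. The paper instead reaches the natural ordering through anatomy: Lemma~\ref{Lem:TuranAP}, Theorem~\ref{thm:prop4.13_park}, splitting partitions and configuration sets. It gets independence of the local characters at the remaining places, i.e.\ of the symbols $(v/h_j)_\ell$, from Chebotarev in $K_Q(E[\ell],\sqrt[\ell]{h_1},\dots,\sqrt[\ell]{h_w})$. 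One auxiliary $\mathcal P_0$-factor $f_a$ absorbs the reciprocity constraint.

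\textbf{Linear disjointness for small $\ell$.} The disjointness you assert in Step 3 holds only when $\mathrm{SL}_2(\mathbb F_\ell)$ is perfect, i.e.\ $\ell\ge5$. This is exactly what the paper invokes (no normal subgroup of index $\ell$), and its Condition~\ref{condition} takes $\ell\nmid 6q$. For $\ell=2$ the failure can be fatal, by the following chain.
\begin{enumerate}
\item A good prime $g$ lies in $\mathcal P_1$ exactly when $(\Delta_E/g)=-1$.
\item If $\Delta_E\mid Q$ and $\gcd(A,\Delta_E)=1$, quadratic reciprocity in $\mathbb F_q[t]$ makes $(\Delta_E/f)$ constant on $\mathcal M(n;Q,A)$. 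Hence the parity of the number of $\mathcal P_1$-factors of odd multiplicity is constant there.
\item Each such factor moves the Selmer rank by $\pm1$ (Proposition~\ref{prop:changes_local_rank}), and every other contribution is even or fixed by $(n,A)$.
\end{enumerate}
So the parity of $\dim\mathrm{Sel}_2(E^f)$ is constant on the class. In classes where it is odd the Selmer group never vanishes, so no Selmer-group argument can give the bound there. For $\ell=3$ disjointness can also fail, though there it only perturbs the transition operators. You need $\ell\ge5$ or $\gcd(Q,\Delta_E)=1$.
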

This theorem generalizes the case $Q = 1$ due to Park \cite{Park2025}. Our result in particular shows that regardless of the imposed congruence relation, a positive proportion of such subsets of $\mathbb{Z}/\ell \mathbb{Z}$ extensions $L$ over $K$ satisfy $\mathrm{rank}(E/L) = \mathrm{rank}(E/K)$.

\subsection*{Strategy and Organization}
The proof rests upon computing the distribution of prime Selmer groups of auxiliary abelian varieties, whose rank of $K$-rational points detect the difference between $\mathrm{rank}(E/K(\sqrt[\ell]{f}))$ and $\mathrm{rank}(E/K)$. We start by constructing an auxiliary abelian variety for each $f \in \mathcal{M}(n; Q, A)$ by following the work by Mazur, Rubin, and Silverberg \cite{MR07, MRS07}:
\begin{equation*}
    E^{\chi_f} := \mathrm{Ker}(\mathrm{Res}_K^{K(\sqrt[\ell]{f})} E \to E).
\end{equation*}
Here, $\mathrm{Res}_K^{K(\sqrt[\ell]{f})} E$ is the Weil restriction of scalars of $E$ with respect to field extension $K(\sqrt[\ell]{f})/K$. There is a natural permutation action of the generator $\sigma_f \in \mathrm{Gal}(K(\sqrt[\ell]{f})/K)$ on $E^{\chi_f}$. Using this generator, we can define an auxiliary $\mathbb{F}_\ell$-vector space, called the $1-\sigma_f$ Selmer group of $E^{\chi_f}$ and denoted by $\mathrm{Sel}_{1-\sigma_f}(E^{\chi_f}/K)$, and obtain
\begin{equation*}
    \mathrm{rank}(E/K(\sqrt[\ell]{f})) - \mathrm{rank}(E/K) \leq (\ell - 1) \dim_{\mathbb{F}_\ell} \mathrm{Sel}_{1-\sigma_f}(E^{\chi_f}/K).
\end{equation*}
In the aforementioned work by Park \cite{Park2025} (and the work \cite{KMR14} by Klagsbrun, Mazur, and Rubin for the number field case), the distribution of such Selmer groups as $f$ varies over the full set of monic degree $n$ polynomials over $\mathbb{F}_q$ can be obtained using a combination of analytic and probabilistic techniques, including effective Chebotarev density theorems, equidistribution of restriction of global characters to Cartesian products of local characters, and stochastic properties of associated Markov operators. However, these results do not automatically transfer when $f$ is restricted to a subset of polynomials subject to a given congruence relation. To overcome this problem, we require additional non-trivial ingredients, including a function-field analogue of Tur\'an's theorem and a new framework for anatomy of polynomials satisfying any given congruence relation. We also need to ensure that the distribution of these Selmer groups can still be obtained after these new ingredients are incorporated. We demonstrate how these two goals can be achieved, thereby proving that the same lower bound obtained in \cite{Park2025} also holds for rank stability of elliptic curves with respect to base change over subsets of $\mathbb{Z}/\ell \mathbb{Z}$ extensions subject to a fixed congruence relation.

The paper is organized as follows. In Section \ref{sec:preliminary}, we discuss the function field analogue of Tur\'an's theorem mentioned above, as well as the anatomy of polynomials satisfying congruence relations. Some additional analytic inputs such as effective versions of Chebotarev density theorem are also recalled. In Section \ref{sec:changes_rank}, we prove our main theorem by constructing auxiliary abelian varieties for each polynomial $f \in \mathcal{M}(n;Q, A)$ using Weil restrictions, introduce Selmer groups of such abelian varieties, gather stochastic properties of associated Markov operators, and compute the distribution of such Selmer groups.

\subsection*{Acknowledgements}
We would like to thank the Max Planck Institute for Mathematics for its hospitality and support, where this project started. We would also like to thank Peter Koymans for sharing his joint works with Carlo Pagano and Efthymios Sofos \cite{KPS24} and with Alexander Smith \cite{KS26}, from which we took inspiration for our project on global function fields.
% interest on this problem over global function fields started.

\section{Preliminaries} \label{sec:preliminary}

This section introduces relevant analytic inputs and results on anatomy of polynomials over finite fields. We will use these ingredients to understand changes in algebraic ranks of elliptic curves with respect to cyclic prime extensions over $K$.

\subsection{Analytic inputs}

Fix a prime power $q$. For any polynomial $f\in\mathbb{F}_q[t]$, define $|f|:=q^{\deg f}$. We denote by $\M(n)$ the set of monic polynomials of degree $n$ over $\mathbb{F}_q$. We denote by $\mathcal{P}$ the set of monic irreducible polynomials over $\mathbb{F}_q$, and by $\mathcal{P}(n)$ the subset of $\mathcal{P}$ consisting of monic irreducible polynomials of degree $n$. Writing $\pi(n):=\#\PP(n)$, we have by the Prime Number Theorem \cite[Theorem 2.2]{Rosen02} that
\begin{equation}\label{eq:PNT}
\pi(n)=\frac{q^n}{n}+O_q\left(\frac{q^{n/2}}{n}\right)
\end{equation}
for all $n\ge1$. By induction, one sees that
\begin{align}\label{eq:HR}
\pi_k(n):&=\#\{f\in\M(n): f=g_1\cdots g_k~\text{with}~g_1,...,g_k\in\PP~\text{distinct}\}\nonumber\\
&=\frac{1}{(k-1)!}\cdot\frac{q^n}{n}(\log(2n))^{k-1}+O_{k,q}\left(\frac{q^n}{n}((\log(2n))^{k-2}\right)  
\end{align}
for all $n\ge1$ and every fixed $k\in\N$. A more precise asymptotic formula for $\pi_k(n)$ which holds uniformly in the range $1\le k\ll \log(2n)$ has been established by Afshar and Porritt \cite{AP19}, but the special case $k=2$ of \eqref{eq:HR} will be sufficient for our applications.

Given $f \in \M(n)$ and $m\in\Z_{\ge0}$, we denote by $\omega(f)$ the number of distinct monic irreducible factors of $f$ and by $\omega_{\le m}(f)$ the number of distinct monic irreducible factors of $f$ of degree at most $m$. For any $Q\in\M(m)$ and $A\in\mathbb{F}_q[t]$, recall that
\begin{equation*}
    \M(n;Q,A) := \{ f \in \M(n) : f \equiv A \pmod Q \}.
\end{equation*}
Evidently, $\#\M(n;Q,A)=q^{n-m}$ when $n\ge m$ and $\#\M(n;Q,A)\le1$ when $n<m$. Likewise, we denote by $\mathcal{P}(n;Q,A)$ the subset of $\mathcal{P}(n)$ defined as
\begin{equation*}
    \mathcal{P}(n;Q,A) := \{g \in \mathcal{P}(n) : g \equiv A \pmod Q\}.
\end{equation*}

% We write $\pi(n):=\#\PP(n)$ and $\pi(n;Q,A):=\#\mathcal{P}(n;Q,A)$ and $\pi(n):=\pi(n;1,1)$. The Prime Number Theorem for arithmetic progressions asserts that
% \begin{equation}\label{eq:PNTAP}
% \pi(n;Q,A)=\frac{1}{\varphi(Q)}\cdot\frac{q^n}{n}+O\left(\frac{q^{n/2}m}{n}\right)
% \end{equation}
% uniformly for all $Q\in\M(m)$ and $A\in\mathbb{F}_q[t]$ such that $\gcd(Q,A)=1$, where 
% \[\varphi(Q):=q^{m}\prod_{\substack{g\in\PP\\g\mid Q}}\left(1-\frac{1}{q^{\deg g}}\right)\]
% is the Euler totient function. This estimate follows from the Generalized Riemann Hypothesis for $\mathbb{F}_q[t]$ proved by A. Weil in 1948.

Our first analytic input is the following function-field analogue of Tur\'{a}n's theorem for $\omega$ over $\M(n;Q,A)$. The proof is straightforward and is included here for the sake of completeness.
\begin{lemma}[Tur\'{a}n's theorem for $\omega$ on $\M(n;Q,A)$]\label{Lem:TuranAP}
Let $q$ be a prime power and let $n\ge m\ge1$ be integers. Then  
\begin{equation}\label{eq:varomega}
\frac{1}{q^{n-m}}\sum_{f\in\M(n;Q,A)}\left(\omega(f)-\log n\right)^2\ll_{m,q}\log(2n)   
\end{equation}
for all $Q\in\M(m)$ and $A\in\mathbb{F}_q[t]$. Consequently, we have
\[\frac{\#\{f\in\M(n;Q,A):|\omega(f)-\log n|\ge B\log n\}}{q^{n-m}}\ll_{m,q}\frac{1}{B^2\log(2n)}\]
for any $B>0$. 
\end{lemma}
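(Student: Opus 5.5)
We expand the square and reduce everything to computing the first two moments of $\omega$ on $\M(n;Q,A)$:
\[
\sum_{f\in\M(n;Q,A)}(\omega(f)-\log n)^2=\sum\omega(f)^2-2\log n\sum\omega(f)+(\log n)^2 q^{n-m}.
\]
Write $\omega(f)=\sum_{P\in\PP,\,P\mid f}1$. The key observation is a counting fact. If $D$ is monic of degree $d\le n-m$, then the number of $f\in\M(n;Q,A)$ divisible by $D$ is at most $q^{n-m-d}$, with equality when $\gcd(D,Q)=1$. Indeed, $f=Dg$ with $g$ monic of degree $n-d$, and the condition $Dg\equiv A \pmod Q$ restricts $g$ modulo $Q/\gcd(D,Q)$. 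So it is either empty or a coset whose size is at most $q^{n-d-m}\cdot|\gcd(D,Q)|$. We avoid the factor $|\gcd(D,Q)|$ by treating primes dividing $Q$ separately: there are at most $m$ of them, so they change $\omega(f)$ by at most $m$. We therefore replace $\omega$ by $\omega'(f)=\#\{P\mid f: P\nmid Q\}$, at a cost of $O_m(1)$ in $\omega-\log n$, which is harmless.

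\textbf{First moment.} We have $\sum_f\omega'(f)=\sum_{P\nmid Q,\ \deg P\le n}\#\{f: P\mid f\}$. For $\deg P\le n-m$ the count is exactly $q^{n-m-\deg P}$. For $n-m<\deg P\le n$ the cofactor $g$ has degree $<m$, so there are at most $q^{m}$ choices of $g$, hence $O_{m,q}(1)$ such $f$ for each $P$. Since $f$ has degree $n$, it has at most $m$ prime factors of degree $>n-m$, so the total contribution of these large primes is $O_m(q^{n-m})$ anyway. For the main part, using \eqref{eq:PNT} we get
\[
\sum_{d\le n-m}\pi(d)q^{-d}=\sum_{d\le n-m}\Big(\frac1d+O(q^{-d/2}/d)\Big)=\log n+O_{m,q}(1).
\]
Hence $\sum_f\omega'(f)=q^{n-m}(\log n+O(1))$.

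\textbf{Second moment.} Write $\omega'(f)^2=\omega'(f)+\sum_{P_1\ne P_2}1_{P_1P_2\mid f}$. For $\deg P_1+\deg P_2\le n-m$ the inner count is exactly $q^{n-m}|P_1P_2|^{-1}$, and the resulting sum is at most $\big(\sum_{\deg P\le n}|P|^{-1}\big)^2=(\log n+O(1))^2$. The terms with $\deg P_1+\deg P_2>n-m$ must be bounded by $O(\log n)$ relative to $q^{n-m}$. This is the main point to check. Whenever $\deg P_1+\deg P_2\ge n-m$, at least one of the two primes has degree $\ge (n-m)/2$. A polynomial of degree $n$ has at most $2+O(m/n)$ prime factors of degree $\ge(n-m)/2$. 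So the number of such ordered pairs dividing a given $f$ is $\ll\omega'(f)$, and summing over $f$ gives $\ll q^{n-m}\log n$ by the first-moment bound. It would also be fine to use a cruder bound there, since only $O(\log n)$ is needed. Thus $\sum\omega'^2\le q^{n-m}\big((\log n)^2+O(\log n)\big)$.

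\textbf{Conclusion.} Substituting the two moments into the expansion, the $(\log n)^2$ terms cancel, leaving $\ll q^{n-m}\log(2n)$. Passing from $\omega'$ back to $\omega$ costs $O_m(1)$ per term, and the cross term is controlled by Cauchy--Schwarz. For small $n$ the bound is trivial. The tail bound then follows by Chebyshev's inequality: the proportion is at most $\log(2n)/(B\log n)^2\ll 1/(B^2\log(2n))$ for $n\ge2$. For $n=1$ we adjust the constant, noting that $\log n=0$ forces the event to be vacuous or handled by the implied constant.
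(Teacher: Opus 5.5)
Your argument is correct and follows the same overall plan as the paper: compute the first moment, bound the second moment, expand the square, then apply Chebyshev. The difference is in how you bound the second moment.

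\textbf{What the paper does.} It computes $\mathbb{E}[\omega^2]$ asymptotically. It evaluates the sum over ordered pairs of distinct primes with $\deg(g_1g_2)\le n-m$ as $2\sum_{k\le n-m}\pi_2(k)q^{-k}=(\log n)^2+O(\log n)$, using the formula \eqref{eq:HR} for $\pi_2(k)$. It then controls the pairs with $n-m<\deg(g_1g_2)\le n$ by counting them via $\pi_2(k)$, each such pair contributing $O(1)$ polynomials $f$.

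\textbf{What you do instead.} You observe that only an upper bound on the second moment is needed, alongside the two-sided first-moment estimate. So the trivial inequality $\sum_{P_1\ne P_2}|P_1P_2|^{-1}\le\bigl(\sum_{\deg P\le n}|P|^{-1}\bigr)^2$ suffices. You dispose of the large-degree pairs by noting that each $f$ has $O(1)$ prime factors of degree $\ge(n-m)/2$, so those pairs contribute $\ll\sum_f\omega'(f)$.

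\textbf{What each route buys.} Yours is the classical Tur\'an argument. It needs only \eqref{eq:PNT}, in the form $\sum_{\deg P\le n}|P|^{-1}=\log n+O(1)$, and dispenses with \eqref{eq:HR} entirely. The paper's route gives a slightly stronger but unneeded asymptotic for $\mathbb{E}[\omega^2]$. Your removal of all primes dividing $Q$ via $\omega'$ is also a little cleaner than the paper's bookkeeping with $D=\gcd(Q,A)$.

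\textbf{Minor slips.}
\begin{enumerate}
\item The first sentence of your counting fact (``at most $q^{n-m-d}$'') is false when $\gcd(D,Q)\ne1$. The count is $q^{n-m-d}|\gcd(D,Q)|$ or $0$, as you then say. You only use the coprime case, so nothing breaks.
\item Your bound ``$2+O(m/n)$'' on the number of large prime factors is really $2n/(n-m)$. This is $O(1)$ only for $n\ge 2m$, which is fine given your remark that $n<2m$ is trivial.
\item Your closing remark about $n=1$ is wrong. There $\log n=0$, so the event $|\omega(f)|\ge 0$ holds for every $f$ and the ratio equals $1$. No constant makes $1\ll_{m,q}1/(B^2\log 2)$ hold for all $B>0$. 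The tail bound genuinely requires $n\ge2$, and for such $n$ your Chebyshev computation is fine. The paper's statement has the same harmless edge-case defect.
\end{enumerate}
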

\begin{proof}
It suffices to prove \eqref{eq:varomega}. Without loss of generality, we may assume that $n$ is sufficiently large in terms of $m$ and that $\deg A<m$. We begin by computing the mean value of $\omega$ on $\M(n;Q,A)$ defined by 
\[\E[\omega]:=\frac{1}{q^{n-m}}\sum_{f\in\M(n;Q,A)}\omega(f).\]
Let $D=\gcd(Q,A)$. By \eqref{eq:PNT} we have
\begin{align}\label{eq:1stomega}
\E[\omega]=\frac{1}{q^{n-m}}\sum_{\substack{g\in\PP\\\deg g\le n}}\sum_{\substack{f\in\M(n;Q,A)\\g\mid f}}1&=\frac{1}{q^{n-m}}\sum_{\substack{g\in\PP\\\deg g\le n\\ g\nmid D}}\sum_{\substack{f\in\M(n;Q,A)\\g\mid f}}1+O\left(\omega(D)\right)\nonumber\\
&=\frac{1}{q^{n-m}}\sum_{\substack{g\in\PP\\\deg g\le n-m}}q^{n-m-\deg g}+ O\left(\omega(D)+\frac{q^m}{n}\right)\nonumber\\
&=\frac{1}{q^{n-m}}\sum_{k=1}^{n-m}q^{n-m-k}\left(\frac{q^k}{k}+O\left(\frac{q^{k/2}}{k}\right)\right)+O_{m,q}(1)\nonumber\\
&=\log n + O_{m,q}(1).
\end{align}
Next, we compute the second moment of $\omega$ on $\M(n;Q,A)$ defined by 
\[\E[\omega^2]:=\frac{1}{q^{n-m}}\sum_{f\in\M(n;Q,A)}\omega(f)^2.\]
We start by observing that
\[\E[\omega^2]=\frac{1}{q^{n-m}}\sum_{f\in\M(n;Q,A)}\omega(f)+\frac{1}{q^{n-m}}\sum_{\substack{g_1\ne g_2\in\PP\\\deg(g_1g_2)\le n}}\sum_{\substack{f\in\M(n;Q,A)\\g_1g_2\mid f}}1.\]
The double sum above can be estimated analogously. Note that the contribution from those $g_1,g_2$ at least one of which divides $Q$ is 
\begin{align*}
&=\frac{2}{q^{n-m}}\sum_{\substack{g_1\ne g_2\in\PP\\\deg(g_1g_2)\le n\\g_1\mid D,g_2\nmid D}}\sum_{\substack{f\in\M(n;Q,A)\\g_1g_2\mid f}}1+ \frac{1}{q^{n-m}}\sum_{\substack{g_1\ne g_2\in\PP\\\deg(g_1g_2)\le n\\g_1, g_2\mid D}}\sum_{\substack{f\in\M(n;Q,A)\\g_1g_2\mid f}}1\\
&\le\frac{2}{q^{n-m}}\sum_{\substack{g_1\ne g_2\in\PP\\\deg(g_1g_2)\le n\\\deg g_2\le n-m\\g_1\mid D,g_2\nmid D}}q^{n-m-\deg g_2}+\frac{2}{q^{n-m}}\sum_{\substack{g_1\ne g_2\in\PP\\\deg(g_1g_2)\le n\\\deg g_2>n-m\\g_1\mid D,g_2\nmid D}}1+\omega(D)^2\\
&\le2\omega(D)\sum_{\substack{g_2\in\PP\\\deg g_2\le n-m}}q^{-\deg g_2}+\frac{2\omega(D)}{q^{n-m}}\sum_{\substack{g_2\in\PP\\\deg g_2\le n}}1+\omega(D)^2\\
&\ll_{m,q}\log n.
\end{align*}
It follows that
\[\frac{1}{q^{n-m}}\sum_{\substack{g_1\ne g_2\in\PP\\\deg(g_1g_2)\le n}}\sum_{\substack{f\in\M(n;Q,A)\\g_1g_2\mid f}}1=\frac{1}{q^{n-m}}\sum_{\substack{g_1\ne g_2\in\PP\\\deg(g_1g_2)\le n\\g_1,g_2\nmid Q}}\sum_{\substack{f\in\M(n;Q,A)\\g_1g_2\mid f}}1+O_{m,q}(\log n).\]
But \eqref{eq:HR} implies
\begin{align*}
\frac{1}{q^{n-m}}\sum_{\substack{g_1\ne g_2\in\PP\\\deg(g_1g_2)\le n\\g_1,g_2\nmid Q}}\sum_{\substack{f\in\M(n;Q,A)\\g_1g_2\mid f}}1&=\frac{1}{q^{n-m}}\sum_{\substack{g_1\ne g_2\in\PP\\\deg(g_1g_2)\le n-m}}q^{n-m-\deg(g_1g_2)}+O\left(\frac{1}{q^{n-m}}\sum_{\substack{g_1\ne g_2\in\PP\\n-m<\deg(g_1g_2)\le n}}1\right)\\
&=2\sum_{k=2}^{n-m}\frac{\pi_2(k)}{q^k}+O\left(\frac{1}{q^{n-m}}\sum_{k=2}^{n}\pi_2(k)\right)\\
&=2\sum_{k=2}^{n-m}\frac{1}{k}\left(\log k+O(1)\right)+O\left(\frac{q^m\log n}{n}\right)\\
&=(\log n)^2+O_{m,q}(\log n).
\end{align*}
Collecting the estimates above, we obtain
\[\E[\omega^2]=\frac{1}{q^{n-m}}\sum_{f\in\M(n;Q,A)}\omega(f)+(\log n)^2+O_{m,q}(\log n)=(\log n)^2+O_{m,q}(\log n).\]
Together with \eqref{eq:1stomega} this yields
\[\frac{1}{q^{n-m}}\sum_{f\in\M(n;Q,A)}\left(\omega(f)-\E[\omega]\right)^2=\E[\omega^2]-\E[\omega]^2\ll_{m,q}\log n.\]
Finally, \eqref{eq:varomega} follows from this estimate in conjunction with \eqref{eq:1stomega} and the inequality $(a+b)^2\le 2(a^2+b^2)$ for any $a,b\in\R$.
\end{proof}
%\textbf{Goal}: It would be great to prove the following result: Show the analogue of Turan's theorem or Hardy-Ramanujan theorem for the set $\M(n;Q,A)$. In particular, it would be great to show that a density $1$ subset of $\M(n;Q,A)$ (even with explicit upper bound on the complement of such a density $1$ subset would be great, but not necessary) satisfies
%    \begin{equation*}
%        C_1 \log n \leq \omega(f) \leq C_2 \log n
%    \end{equation*}
%for any constant $C_1 \in (0,1)$ and $C_2 \in (1,\infty)$. The following reference by Afshar and Porritt \cite{AP19} (which seems to prove the Sathe-Selberg analogue for function fields in arithmetic progressions) could be useful for us for square-free polynomials. It is also fine to restrict our set $\M(n)$ to be the set of monic square-free polynomials of degree $n$ over $\mathbb{F}_q$, if it makes the exposition easier.

The next result \cite[Proposition 4.13]{Park2025} ensures that most of the irreducible factors of $f$ belong to $\cup_{k\ge\mathfrak{n}}\M(k;Q,A)$ with $\mathfrak{n} := \lfloor 4 (\log n)^2 \rfloor$. 
% (One can take it to be any fixed power of $\log n$).
\begin{theorem}[{\cite[Theorem 2.11]{Park2026}} and {\cite[Proposition 4.13]{Park2025}}] \label{thm:prop4.13_park}
    Let $\mathfrak{n} := \lfloor 4 (\log n)^2 \rfloor$. Then 
    \begin{equation*}
        \#\Biggl\{f \in \M(n;Q,A) : \omega_{\le \mathfrak{n}}(f) < \left\lfloor \frac{\log n}{\log \log \log n} \right\rfloor \Biggr\} \ll_{Q} \frac{q^n}{n^{2.9}}
    \end{equation*}
for every $n > \exp(\exp(\exp(e)))$.
\end{theorem}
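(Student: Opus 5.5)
The plan is to bound the exceptional set by the set of $f$ with few small prime factors, without any congruence restriction, and then count that larger set via a factorization $f = gh$ where $g$ is the part of $f$ built from primes of degree at most $\mathfrak{n}$. The congruence condition only enters through the trivial bound $\#\M(n;Q,A)\le q^{n-m}$, which explains why the implied constant depends on $Q$. In fact the bound $q^n/n^{2.9}$ is stated without the factor $q^{-m}$. So it should suffice to prove
\[
\#\{f\in\M(n): \omega_{\le\mathfrak n}(f)<K\}\ll \frac{q^n}{n^{2.9}},\qquad K:=\left\lfloor \frac{\log n}{\log\log\log n}\right\rfloor,
\]
and then intersect with $\M(n;Q,A)$. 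If one insists on a bound of the shape $q^{n-m}/n^{2.9}$, the same argument runs inside the residue class. The only change is that the count of $h$ in a fixed class modulo $Q$ picks up a factor $q^{-m}$, up to boundary terms, whenever $\deg h\ge m$; this is where the dependence on $Q$ enters.

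Write $f = g h$, where every prime factor of $g$ has degree $\le\mathfrak n$ and every prime factor of $h$ has degree $>\mathfrak n$.

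Step 1. Handle the smooth parts $g$ that are large. Rankin's trick gives
\[
\#\{g \text{ is } \mathfrak n\text{-smooth}, \deg g = d\}\le q^{d(1-\sigma)}\prod_{\deg P\le\mathfrak n}(1-|P|^{\sigma-1})^{-1}.
\]
Choosing $\sigma\asymp (\log\mathfrak n)/\mathfrak n$ makes this negligible once $d$ exceeds a suitable threshold $D$. Alternatively, one can discard the $f$ whose smooth part has large degree by a direct smooth-number estimate, since $\mathfrak n=4(\log n)^2$ is small compared with $n$.

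Step 2. Handle the remaining $f$, which have $\omega_{\le \mathfrak n}(f)=j<K$. Count them by summing over $g$ with $\omega(g)=j$ of the number of $h\in\M(n-\deg g)$ free of primes of degree $\le\mathfrak n$. A sieve bound, namely the function-field fundamental lemma or simply the Rankin/Buchstab upper bound, gives
\[
\#\{h\in\M(N)\text{ rough}\}\ll \frac{q^N}{\mathfrak n}\quad (N\ge \mathfrak n),
\]
up to the usual constant. Hence the total contribution is
\[
\ll \frac{q^n}{\mathfrak n}\sum_{j<K}\sum_{\omega(g)=j,\ \mathfrak n\text{-smooth}} |g|^{-1}\le \frac{q^n}{\mathfrak n}\sum_{j<K}\frac{1}{j!}\Big(\sum_{\deg P\le \mathfrak n}|P|^{-1}\Big)^j.
\]
Since $\sum_{\deg P\le\mathfrak n}|P|^{-1}=\log\mathfrak n+O(1)$, this is $\ll \frac{q^n}{\mathfrak n}\,K\frac{(\log \mathfrak n+C)^K}{K!}$.

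This crude bound is far too weak: $1/\mathfrak n$ only saves $(\log n)^{-2}$. The key point must therefore be that $h$ being rough of degree about $n$ is much rarer than $1/\mathfrak n$. This fails, however, since the density of such $h$ is indeed about $1/\mathfrak n$. So I would reconsider what the statement actually measures.

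Main obstacle. The real saving $n^{-2.9}$ must come from requiring few prime factors of degree $\le\mathfrak n$ while a typical $f$ has about $\log\mathfrak n\approx 2\log\log n$ of them. The saving from this constraint is then only polylogarithmic, so a pure Poisson-tail argument cannot give $n^{-2.9}$. I therefore expect the cited argument of \cite[Proposition 4.13]{Park2025} to rely on the function $\omega_{\le\mathfrak n}$ being read differently, presumably as counting factors of degree roughly $\ge\mathfrak n$ in their normalization, together with a large-deviation bound of Chernoff type, $\mathbb{E}[z^{\omega}]$ with $z$ small, to reach a power saving.

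Concretely, I would compute the moment generating function
\[
\sum_{f\in\M(n)} z^{\omega_{\le\mathfrak n}(f)} \asymp q^n\prod_{\deg P\le\mathfrak n}\Big(1+\frac{z-1}{|P|}\Big)\approx q^n\mathfrak n^{z-1}
\]
via the function-field Selberg--Delange method, and apply Markov's inequality with $z$ chosen optimally. The delicate step is whether the exponent reaches $2.9$, which depends on the exact normalization. To pass to $\M(n;Q,A)$ I would twist by Dirichlet characters modulo $Q$, which changes only the constants by factors depending on $Q$.
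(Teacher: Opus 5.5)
\textbf{There is a genuine gap.} Your reduction to $\mathcal{M}(n)$ via $\mathcal{M}(n;Q,A)\subseteq\mathcal{M}(n)$ is fine. It is essentially all the paper does here, since it imports the unrestricted estimate from \cite[Proposition 4.13]{Park2025}, and no twisting by characters modulo $Q$ is needed. The problem is that you never establish any bound, and your diagnosis of why Step 2 stalls points the wrong way.

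\emph{The printed statement is false.} With the inequality as printed, the exceptional set is essentially all of $\mathcal{M}(n;Q,A)$. Write $S:=\sum_{\deg P\le\mathfrak{n}}|P|^{-1}\le\log\mathfrak{n}+1\approx 2\log\log n$. The average of $\omega_{\le\mathfrak{n}}$ over $\mathcal{M}(n;Q,A)$ is at most $S+m$, whereas $K=\lfloor\log n/\log\log\log n\rfloor$ is far larger. By Markov's inequality, all but $O_m(q^{n-m}\log\log n\,\log\log\log n/\log n)$ elements satisfy $\omega_{\le\mathfrak{n}}(f)<K$, and $q^{n-m}$ is not $\ll_Q q^n n^{-2.9}$.

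Your own Step 2 already shows this: $\sum_{j<K}(\log\mathfrak{n}+C)^j/j!\approx e^{C}\mathfrak{n}$ cancels the factor $1/\mathfrak{n}$ completely. In your ``main obstacle'' paragraph you treat $K$ as if it lay below the typical value $2\log\log n$, when it lies far above it.

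Your proposed reinterpretation also fails. That is, counting factors of degree $\ge\mathfrak{n}$ and using a lower-tail Chernoff bound at small $z$ cannot work. Every irreducible $f\in\mathcal{M}(n;Q,A)$ has exactly one such factor, and there are $\gg_Q q^n/n$ of them when $\gcd(A,Q)=1$.

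\emph{The missing idea is that the inequality must be reversed.} The paper's own description supports this: the sentence introducing the theorem says most irreducible factors of $f$ have degree at least $\mathfrak{n}$. So does its use in Theorem \ref{theorem:decomposition}, where $w-w'=\omega(f_*)=\omega_{\le\mathfrak{n}}(f)$ is only allowed up to about $w/\log\log\log n$. What is needed is therefore the upper-tail bound $\#\{f:\omega_{\le\mathfrak{n}}(f)\ge K\}\ll q^n n^{-2.9}$.

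That bound is elementary. Such an $f$ is divisible by a squarefree $g$ made of $K$ distinct primes of degree $\le\mathfrak{n}$. Hence the count is at most $q^n\sum_g|g|^{-1}\le q^nS^K/K!\le q^n(eS/K)^K$, where $S\le\log\mathfrak{n}+1$ follows from $\pi(k)\le q^k/k$. The resulting bound is uniform in $q$ and $Q$ and equals $q^n n^{-(1+o(1))\log\log n/\log\log\log n}$. Its exponent is about $-3.5$ already at $n=\exp(\exp(\exp(e)))$ and only decreases after that.

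Equivalently, your moment-generating-function idea works if you take $z=K/S$ \emph{large} rather than small. Expanding $z^{\omega_{\le\mathfrak{n}}(f)}=\sum_{g}(z-1)^{\omega(g)}$ over squarefree $\mathfrak{n}$-smooth $g\mid f$ gives $\sum_{f\in\mathcal{M}(n)}z^{\omega_{\le\mathfrak{n}}(f)}\le q^n e^{(z-1)S}$ for every $z\ge1$, with no Selberg--Delange input. Markov's inequality then returns the same bound.
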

\begin{proof}
 This follows from \cite[Proposition 4.13]{Park2025} with $n + \deg Q$ in place of $n$.
\end{proof}

The final analytic tool needed is the following effective Chebotarev density theorem.
\begin{theorem}[c.f. {\cite[Corollary 3.2]{Park2025}}] \label{thm:effective_chebotarev}
    Let $L/K$ be a Galois extension such that the constant field of $L$ is $\mathbb{F}_q$. Let $G := \mathrm{Gal}(L/K)$, and let $S, S' \subseteq G$ be unions of conjugacy classes of $G$. Let $g_L$ be the genus of the field $L$. Suppose $q$ satisfies
    \begin{equation*}
        q^{n/2} - q^{n/4} > 2(\#G + g_L).
    \end{equation*}
    If $n \geq 2(\log 8 + \log(\# G + g_L))$, then 
    \begin{equation*}
        \left| \frac{\#\{g \in \mathcal{P}(n) : \mathrm{Frob}_g \in S\}}{\#\{g \in \mathcal{P}(n) : \mathrm{Frob}_g \in S'\}} - \frac{\# S}{\# S'} \right| < 16 \frac{\# S}{\# S'} (\#G + g_L) q^{-n/2}.
    \end{equation*}
\end{theorem}

\subsection{Conditions on elliptic curves}

We impose the following conditions on the elliptic curve $E$ over $K = \mathbb{F}_q(t)$. These conditions and definitions appear in \cite[Section 4.1]{Park2025}, but slight modifications are needed here.
\begin{condition}[c.f. {\cite[Condition 4.1]{Park2025}}] \label{condition}
  Fix a prime $\ell$ coprime to $6q$ and assume the following conditions hold for the elliptic curve $E/K$.
    \begin{itemize}
        \item $E$ is non-isotrivial, i.e. its $j$-invariant is a non-constant rational function in $\mathbb{F}_q(t)$.
        \item $E$ has a place of split multiplicative reduction.
        \item $(q,6) = 1$ and $q \equiv 1 \pmod \ell$. In particular, the set $\mu_\ell$ of primitive $\ell$-th roots of unity is contained in $K$.
        \item $\mathrm{Gal}(K(E[\ell])/K) \cong \mathrm{SL}_2(\mathbb{F}_\ell)$.
    \end{itemize}
\end{condition}

\begin{definition}[c.f. {\cite[Definition 4.2]{Park2025}}]
    Let $Q\in\M(m)$ and fix a prime $\ell$ and an elliptic curve $E$ satisfying Condition \ref{condition}. We adopt the following notation.
    \begin{itemize}
        \item $\Delta_E$: the discriminant of $E$.
        \item $\Sigma_E$: the set of elements in $\mathcal{P}$ dividing $\Delta_E$.
        \item $\sigma$: a square-free product of all elements in some finite subset of $\mathcal{P} \setminus \Sigma_E$.
        \item $\Sigma_E(\sigma)$: the set of elements that divide $\sigma$ or are in $\Sigma_E$.
        \item $d_{\Sigma_E(\sigma)}$: the sum of degrees of elements in $\Sigma_E(\sigma)$. 
        \item $K_g$: the local field of $K$ at $g$ for each $g \in \mathcal{P}$.
        \item For each $0 \leq k \leq 2$, we denote by $\mathcal{P}_k(n)$ the set
        \begin{equation*}
            \mathcal{P}_k(n) := \{g \in \mathcal{P}(n) \setminus \Sigma_E : \dim_{\mathbb{F}_\ell} E[\ell](K_g) = k\}.
        \end{equation*}
        \item For each polynomial $R$ of degree less than $\deg Q$, and any $0 \leq k \leq 2$, we define the set
        \begin{equation*}
            \mathcal{P}_k(n;Q, R) := \{g \in \mathcal{P}(n; Q,R) \setminus \Sigma_E : \dim_{\mathbb{F}_\ell} E[\ell](K_g) = k\}.
        \end{equation*}
    \end{itemize}
\end{definition}

\begin{remark}
    Let $L$ be the compositum of the ray class field $K_Q/K$ with modulus $Q + \infty$ (considered as a divisor in $K$, see \cite[Exercise 9.18, p. 147]{Rosen02}) and $K(E[\ell])$. By \cite[Proposition 4.3, Theorem 9.25]{Rosen02}, the constant field of $L$ is equal to the constant field of $K$. Hence, we can compute the density $\#\mathcal{P}_k(n; Q, R)/\#\mathcal{P}(n;Q, R)$ using Theorem \ref{thm:effective_chebotarev}. For example, if $(Q, \Delta_E) = 1$ or $\ell \geq 5$, then there exists an explicit constant $B_{Q,E}$ depending on $Q$ and $E$ such that
    \begin{equation*}
        \max \left\{\left| \frac{\# \mathcal{P}_0(n; Q, R)}{\# \mathcal{P}(n;Q, R)} - \left(1 - \frac{\ell}{\ell^2-1} \right) \right|, \left| \frac{\# \mathcal{P}_1(n; Q, R)}{\# \mathcal{P}(n;Q, R)} - \frac{1}{\ell} \right|, \left| \frac{\# \mathcal{P}_2(n; Q, R)}{\# \mathcal{P}(n;Q, R)} - \frac{1}{\ell^3 - \ell} \right|\right\} < B_{Q,E} q^{-n/2}.
    \end{equation*}
\end{remark}

\subsection{Splitting partitions}

To obtain the desired results on rank growths of elliptic curves with respect to subsets of cyclic prime extensions over $K$ subject to congruence conditions, we make a slight modification to the definitions and notation appearing in \cite[Sections 4.2, 5.2]{Park2025} regarding the anatomy of monic polynomials of degree $n$.

\begin{definition}[c.f. {\cite[Definition 4.5]{Park2025}}]
    Let $Q\in\M(m)$. Fix a prime $\ell$ and an elliptic curve $E$ satisfying Condition \ref{condition}. Given a positive integer $n$, put
    % denote by $\mathcal{I}(n,Q)$ the set of 4-tuple of indices
    \begin{equation*}
        \mathcal{I}(n,Q) := \Biggl\{(i,j,k,R) \in \mathbb{Z}_{\geq 0}^{\oplus 3} \oplus \mathbb{F}_q[t] : {1 \leq i \leq n, 1 \leq j \leq n, 0 \leq k \leq 2, \deg R < \deg Q} \Biggr\}
    \end{equation*}
    % Given a positive integer $n$, we denote by $\lambda_{n,Q}$ a finite set of $5$-tuples of elements defined as
    and
    \begin{equation*}
        \lambda_{n,Q} := \{(\lambda_{i,j,k,R}, i,j,k,R) : \lambda_{i,j,k,R} \geq 0, (i,j,k,R) \in \mathcal{I}(n,Q)\}.
    \end{equation*}
\end{definition}

\begin{definition}[c.f. {\cite[Definition 4.7]{Park2025}}]
    Given positive integers $n, w$, we say that $\lambda_{n,Q}$ is a splitting partition with respect to $(n,w)$ if it satisfies the following two conditions.
    \begin{itemize}
        \item $\displaystyle{\sum_{i,j=1}^n \sum_{k=0}^2 \sum_{\substack{R \in \mathbb{F}_q[t] \\ \deg R < \deg Q}} \lambda_{i,j,k,R} \cdot i \cdot j = n}$,
        \item $\displaystyle{\sum_{i,j=1}^n \sum_{k=0}^2 \sum_{\substack{R \in \mathbb{F}_q[t] \\ \deg R < \deg Q}} \lambda_{i,j,k,R} = w}$.
    \end{itemize}

    Suppose a monic polynomial $f \in \mathcal{M}(n;Q,A)$ admits an irreducible factorization
    \begin{equation*}
        f = \prod_{m=1}^w g_m^{j_m},
    \end{equation*}
    where $g_m \in \mathcal{P}$ and $j_m > 0$ for every $1 \leq m \leq w$. Then such an $f \in \mathcal{M}(n;Q,A)$ admits a splitting partition $\lambda_{n,Q}$ with respect to $(n,w)$ if it satisfies the following equation for all indices $(i,j,k,R) \in \mathcal{I}(n,Q)$:
    \begin{equation*}
        \lambda_{i,j,k,R} = \#\{g \in \mathcal{P}_k(i ; Q, R) : g^j \mid f \text{ but } g^{j+1} \nmid f\}.
    \end{equation*}
    In particular, for each index $(i,j,k,R) \in \mathcal{I}(n,Q)$, $\lambda_{i,j,k,R}$ is the number of distinct irreducible factors of $f$ of degree $i$ and multiplicity $j$ such that $\dim_{\mathbb{F}_\ell} E[\ell](K_g) = k$ and $g \equiv R \pmod Q$.
\end{definition}

\begin{definition} [c.f. {\cite[Definition 4.10]{Park2025}}] 
    Given a monic polynomial $f \in \mathcal{M}(n; Q, A)$ and an irreducible polynomial $g \in \mathcal{P}$, we denote by $v_g(f)$ the multiplicity of $g$ dividing $f$. Then we denote by $f_*$ and $f^*$ the polynomials defined as
    \begin{equation*}
        f_* := \prod_{i=1}^{\mathfrak{n}}\prod_{\substack{g \in \mathcal{P}(i) \\ g \mid f}} g^{v_g(f)}, \hspace{15pt} f^* := \prod_{i=\mathfrak{n}+1}^{n}\prod_{\substack{g \in \mathcal{P}(i) \\ g \mid f}} g^{v_g(f)}.
    \end{equation*}
\end{definition}

By using splitting partitions, we can give a combinatorial description of irreducible factors of $f_*$ and $f^*$. This amounts to imposing additional conditions on the splitting partitions admitted by $f_*$ and $f^*$.
\begin{definition}[c.f. {\cite[Definition 4.9]{Park2025}}]
    We define the following set of splitting partitions.
    \begin{itemize}
        \item $\Lambda_{n,w,Q}^{\text{for}}$: the set of splitting partitions $\lambda_{n,Q}$ with respect to $(n,w)$ such that $\lambda_{i,j,k,Q} = 0$ whenever $i > \mathfrak{n}$. We use elements $\eta \in \Lambda_{n,w,Q}^{\text{for}}$ to describe splitting partitions that $f_*$ admit.
        \item $\Lambda_{n,w,Q}^{\text{la}}$: the set of splitting partitions $\lambda_{n,Q}$ with respect to $(n,w)$ such that 
        \begin{itemize}
            \item $\lambda_{i,j,k,R} = 0$ whenever $i \leq \mathfrak{n}$
            \item $\lambda_{i,j,0,R} \neq 0$ for some $i > \mathfrak{n}$ and $j \not\equiv 0 \pmod \ell$.
        \end{itemize}
        We use elements $\lambda \in \Lambda_{n,w,Q}^{\text{la}}$ to describe splitting partitions that $f^*$ admit.
    \end{itemize}
\end{definition}

Using two types of splitting partitions defined above, we can define the following subsets of $\mathcal{M}(n;Q,A)$ as follows.
\begin{definition} \label{defn:M(lambda,eta)}
    Let $n > N, w > w'$ be four positive integers.
    \begin{itemize}
        \item We denote by $\mathcal{M}(n,w:N,w'; Q, A)$ the set of polynomials in $\mathcal{M}(n; Q, A)$ satisfying the following two conditions:
        \begin{itemize}
            \item $\deg f = n$ and $\omega(f) = w$.
            \item $\deg f^* = N$, $\omega(f^*) = w'$, and $f^*$ is $\ell$-th power free.
        \end{itemize}
        \item Given two partitions $\lambda \in \Lambda_{N,w',Q}^{\text{la}}$ and $\eta \in \Lambda_{n-N,w-w',Q}^{\text{for}}$, we denote by $\mathcal{M}(\lambda, \eta; Q, A)$ the set of polynomials in $\mathcal{M}(n,w:N,w'; Q, A)$ such that
        \begin{itemize}
            \item $f^*$ admits a splitting partition $\lambda$ with respect to $(N,w')$.
            \item $f_*$ admits a splitting partition $\eta$ with respect to $(n-N,w-w')$.
        \end{itemize}
        \item We note that the following relation holds:
        \begin{equation*}
            \mathcal{M}(n,w:N,w'; Q, A) = \bigsqcup_{\lambda \in \Lambda_{N,w',Q}^{\emph{la}}} \bigsqcup_{\eta \in \Lambda_{n-N,w-w',Q}^{\emph{for}}} \mathcal{M}(\lambda, \eta ; Q, A).
        \end{equation*}
    \end{itemize}
\end{definition}

The theorem below shows that we can obtain the desired changes in algebraic ranks of elliptic curves with respect to subsets of cyclic prime extensions over $K$ by observing these changes over the sets $\mathcal{M}(\lambda,\eta; Q, A)$.
\begin{theorem}[c.f. {\cite[Proposition 4.14]{Park2025}}] \label{theorem:decomposition}
    Let $B \in (0,1)$. Suppose $n$ is a positive number such that 
    \begin{equation*}
        n > \max\{\exp(\exp(\exp(e))), 6 \cdot (\ell^3 + g_{E[\ell]})\}.
    \end{equation*}
    Given a monic polynomial $Q$ and a polynomial $A$ of degree less than $\deg Q$, we have
    \begin{align}
    \begin{split}
        & \hspace{15pt} \#\mathcal{M}(n ; Q, A) - \sum_{w = \lfloor(1-B) \log n \rfloor}^{\lceil(1+B) \log n \rceil} \hspace{5pt} \sum_{w' = \lfloor(1 - \frac{1}{\log \log \log n}) w \rfloor}^{w} \hspace{5pt} \sum_{N = w' \mathfrak{n}}^n \#\mathcal{M}(n,w:N,w'; Q, A) \\
        &\ll_{Q,q} \frac{\# \mathcal{M}(n ; Q, A)}{B^2 \log(2n)}.
    \end{split}
    \end{align}
\end{theorem}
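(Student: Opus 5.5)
The plan is to bound the complement of the triple union by the sum of three exceptional sets and to control each of them separately. Every $f \in \mathcal{M}(n;Q,A)$ has a well-defined $w=\omega(f)$, $N=\deg f^*$ and $w'=\omega(f^*)$. Hence $f$ is counted in the triple sum exactly when four conditions hold: (i) $\lfloor(1-B)\log n\rfloor \le w \le \lceil(1+B)\log n\rceil$; (ii) $w' \ge \lfloor(1-\frac{1}{\log\log\log n})w\rfloor$; (iii) $f^*$ is $\ell$-th power free; and (iv) $N \ge w'\mathfrak{n}$. Condition (iv) holds automatically, since every irreducible factor of $f^*$ has degree greater than $\mathfrak{n}$, so $\deg f^* \ge \omega(f^*)\,\mathfrak{n}$. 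The sets $\mathcal{M}(n,w:N,w';Q,A)$ are disjoint for distinct triples, so the left-hand side equals the number of $f$ violating (i), (ii) or (iii).

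\emph{Step 1 (condition (i)).} By Lemma \ref{Lem:TuranAP}, the number of $f$ with $|\omega(f)-\log n| \ge B\log n$ is $\ll_{m,q} q^{n-m}/(B^2\log(2n))$. The discrepancy caused by the floor and ceiling is harmless here. This step gives the main term of the bound.

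\emph{Step 2 (condition (ii), given (i)).} Since $w' = w - \omega_{\le\mathfrak{n}}(f)$, condition (ii) fails only if $\omega_{\le \mathfrak{n}}(f) > w/\log\log\log n$, up to rounding. I expect this to be the main obstacle. Theorem \ref{thm:prop4.13_park} as stated is a \emph{lower} bound on $\omega_{\le\mathfrak{n}}$, whereas here I need an upper bound. I will therefore prove the upper bound directly with a first-moment computation, as in \eqref{eq:1stomega}. For $n$ large we have $\mathfrak{n} \le n-m$, so
\[
\frac{1}{q^{n-m}}\sum_{f\in\mathcal{M}(n;Q,A)}\omega_{\le\mathfrak{n}}(f) \le \sum_{\deg g\le\mathfrak{n}} q^{-\deg g} + O_{m,q}(1) = \log\mathfrak{n} + O_{m,q}(1) \ll \log\log n.
\]
The $O_{m,q}(1)$ term absorbs the primes dividing $D=\gcd(Q,A)$. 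Given (i), the threshold $w/\log\log\log n$ is at least about $(1-B)\log n/\log\log\log n$. Markov's inequality then bounds this exceptional set by
\[
q^{n-m}\cdot\frac{\log\log n\,\log\log\log n}{(1-B)\log n}.
\]
The factor $(1-B)^{-1}$ could be a problem, but if $B$ is close to $1$ the claimed bound is at least a constant times $q^{n-m}/\log(2n)$, so such $B$ can be absorbed. What remains is to compare this Markov bound with $1/(B^2\log 2n)$. The bound $\log\log n\cdot\log\log\log n/\log n$ is slightly weaker than $1/\log n$. If the comparison is genuinely needed, I would instead use a second-moment (Turán-type) bound for $\omega_{\le\mathfrak{n}}$, whose variance is $\ll \log\log n$. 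Chebyshev's inequality then gives $\ll \log\log n\,(\log\log\log n)^2/(\log n)^2$, which is $o(1/\log n)$ and is absorbed.

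\emph{Step 3 (condition (iii)).} If $f^*$ is not $\ell$-th power free, then $g^\ell \mid f$ for some $g\in\mathcal{P}$ with $\deg g>\mathfrak{n}$. For $\ell\deg g \le n-m$ and $g\nmid Q$, the number of such $f$ is $q^{n-m-\ell\deg g}$. Summing over $g$ gives
\[
\ll q^{n-m}\sum_{k>\mathfrak{n}} q^{-(\ell-1)k} \ll q^{n-m-\mathfrak{n}},
\]
which is negligible. For $g$ of larger degree, at most one multiple of $g^\ell$ lies in each residue class, and there are at most $q^{n/\ell}$ such $g$, so this contribution is also negligible. Adding the three estimates proves the theorem, with constants depending on $Q$ and $q$.
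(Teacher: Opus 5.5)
Your argument is correct. It agrees with the paper on condition (i), but it takes a different, self-contained route for condition (ii) and makes the $\ell$-th-power-free count explicit.

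The paper's proof is just a line-by-line adaptation of \cite[Proposition 4.14]{Park2025}. Lemma~\ref{Lem:TuranAP} replaces the large-deviation estimate for $\omega$, which is your Step 1. Theorem~\ref{thm:prop4.13_park} supplies a strong tail bound, of relative size about $n^{-2.9}$, on the number of small prime factors. Power-freeness of $f^*$ is left to the cited argument.

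You are right that Theorem~\ref{thm:prop4.13_park} as printed is a lower-tail statement and cannot be what is used. Your own bound $\mathbb{E}[\omega_{\le\mathfrak{n}}]\ll\log\log n$ plus Markov shows that the printed set has density tending to $1$, so the intended statement must be the upper tail. Even corrected, its threshold $\lfloor\log n/\log\log\log n\rfloor$ exceeds the threshold $w/\log\log\log n$ you actually need when $w<\log n$.

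Your replacement only has to beat $1/\log n$, because the Tur\'an term already costs $1/(B^2\log 2n)$. So a second-moment bound suffices, and it is insensitive to the exact threshold. The paper's citation proves far more than is needed here.

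Three points to tighten. First, resolve your hedge ``if the comparison is genuinely needed'': it is needed. Markov's inequality loses a factor $\log\log n\,\log\log\log n$ against the target, so the Chebyshev version is the actual proof. Its variance bound is immediate. For distinct $g_1,g_2\nmid Q$ with $\deg(g_1g_2)\le 2\mathfrak{n}\le n-m$, exactly $q^{n-m-\deg(g_1g_2)}$ elements of $\mathcal{M}(n;Q,A)$ are divisible by $g_1g_2$. Hence these divisibility events are pairwise independent, the primes dividing $Q$ contribute constants, and $\mathrm{Var}(\omega_{\le\mathfrak{n}})\le\sum_{\deg g\le\mathfrak{n}}q^{-\deg g}\ll\log\log n$.

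Second, your remark about $B$ near $1$ is best justified by monotonicity. The left-hand side is non-increasing in $B$. For $B\ge\frac12$ the right-hand side is at least a quarter of its value at $B=\frac12$. So you may assume $B\le\frac12$, where the Chebyshev threshold $(\frac12\log n-1)/\log\log\log n$ dwarfs the mean $\approx 2\log\log n$.

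Third, in Step 3, for $\ell\deg g>n-m$, read ``at most one element of $\mathcal{M}(n;Q,A)$ per $g$''. This holds because $\gcd(g,Q)=1$ and $\deg(g^\ell Q)>n$.
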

\begin{proof}
    The proof follows from a line by line adaptation of \cite[Proposition 4.14]{Park2025}, which uses large deviation estimates on number of distinct irreducible factors of monic polynomials and Theorem \ref{thm:prop4.13_park}. In place of large deviation estimates, we instead use Lemma \ref{Lem:TuranAP}, which is an analogue of Tur\`an's theorem for $\omega$ on $\mathcal{M}(n ; Q, A)$.
\end{proof}

\subsection{Auxiliary places}

Using Theorem \ref{theorem:decomposition}, we shall reduce the problem of obtaining statistics of rank growths of elliptic curves with respect to cyclic Galois extensions $K(\sqrt[\ell]{f})/K$ as $f$ varies over $\mathcal{M}(n ; Q, A)$ to the same type of problems where $f$ varies over $\mathcal{M}(n, w: N, w'; Q, A)$ given choices of $w, w',$ and $N$. We can specify our analysis further by considering the same type of problems where $f$ varies over $\mathcal{M}(\lambda, \eta; Q, A)$ with fixed choices of splitting partitions $\lambda \in \Lambda_{N,w',Q}^{\emph{la}}$ and $\eta \in \Lambda_{n-N, w-w',Q}^{\emph{la}}$. However, as we will demonstrate (much as in \cite{Park2025}), we require additional notions in order to effectively quantify such changes in ranks. The following subsection, which makes modifications to several notations appearing in \cite[Section 5.2]{Park2025}, aims to introduce these desired additional notions which will allow us to understand rank growths of elliptic curves with respect to cyclic Galois extensions $K(\sqrt[\ell]{f})/K$ as $f$ varies over $\mathcal{M}(\lambda, \eta; Q, A)$.

\begin{definition}
    Given a set $X$, we denote by $\mathrm{PConf}_n(X)$ the ordered configuration set of $n$ elements in $X$:
    \begin{equation*}
        \mathrm{PConf}_n(X) := \{(x_1,\cdots,x_n) \in X^{n} : x_i \neq x_j \text{ for all } 1 \leq i < j \leq n\}.
    \end{equation*}
    Note that $\mathrm{PConf}_n(X)$ admits a transitive permutation action by the symmetric group $S_n$. We write $\mathrm{Conf}_n(X)$ for the quotient set (or the unordered configuration set of $n$ elements in $X$):
    \begin{equation*}
        \mathrm{Conf}_n(X) := \mathrm{PConf}_n(X)/S_n.
    \end{equation*}
\end{definition}

The following lemma shows that the set $\mathcal{M}(\lambda,\eta; Q, A)$ introduced in Definition \ref{defn:M(lambda,eta)} can be identified with a Cartesian product of unordered configuration sets.
\begin{lemma} \label{lemma:characterization_M(lambda,eta)}
    Let $n > N$, $w > w'$ be four integers. Choose splitting partitions $\lambda \in \Lambda_{N,w',Q}^{\emph{la}}$ and $\eta \in \Lambda_{n-N,w-w',Q}^{\emph{for}}$. Then either $\mathcal{M}(\lambda,\eta; Q, A)$ is empty, or there exists a bijection of sets
    \[
        \mathcal{M}(\lambda, \eta; Q, A) \cong \left(\prod_{i,j,k,R} \mathrm{Conf}_{\lambda_{i,j,k,R}}(\mathcal{P}_k(i; Q,R)) \right) \times \left(\prod_{\hat{i}, \hat{j}, \hat{k}, \hat{R}} \mathrm{Conf}_{\eta_{\hat{i},\hat{j},\hat{k},\hat{R}}}(\mathcal{P}_{\hat{k}}(\hat{i}; Q,\hat{R})) \right).
    \]
\end{lemma}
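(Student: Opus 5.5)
The plan is to build the bijection explicitly through unique factorization in $\mathbb{F}_q[t]$. Fix $\lambda\in\Lambda^{\text{la}}_{N,w',Q}$ and $\eta\in\Lambda^{\text{for}}_{n-N,w-w',Q}$. Let $\mathcal{C}$ denote the product of configuration sets on the right-hand side. First I would define a map $\Phi:\mathcal{M}(\lambda,\eta;Q,A)\to\mathcal{C}$. For $f$ in the source, factor $f=f^*f_*$. For each index $(i,j,k,R)$, collect the set of irreducible $g\mid f^*$ with $\deg g=i$, $v_g(f)=j$, $\dim_{\mathbb{F}_\ell}E[\ell](K_g)=k$ and $g\equiv R\pmod Q$. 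Because $f$ admits $\lambda$, this is an unordered set of exactly $\lambda_{i,j,k,R}$ distinct elements of $\mathcal{P}_k(i;Q,R)$, i.e.\ a point of $\mathrm{Conf}_{\lambda_{i,j,k,R}}(\mathcal{P}_k(i;Q,R))$. The same construction applied to $f_*$ and $\eta$ gives the remaining coordinates. Here every irreducible factor of $f$ lies outside $\Sigma_E$, since it lies in some $\mathcal{P}_k$ by the definition of admitting a partition. Each such factor is also reduced to a unique residue class $R$ with $\deg R<\deg Q$, so every factor is recorded exactly once.

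Next I would show $\Phi$ is injective. The data $\Phi(f)$ determine $f$ through
\[
f=\prod_{(i,j,k,R)}\ \prod_{g\in S_{i,j,k,R}} g^{j}\cdot\prod_{(\hat i,\hat j,\hat k,\hat R)}\ \prod_{g\in T_{\hat i,\hat j,\hat k,\hat R}} g^{\hat j},
\]
where $S$ and $T$ are the chosen configurations. Injectivity then follows from unique factorization.

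For the converse, take any tuple $(S,T)\in\mathcal{C}$ and form the product $F$ above. The sets for different indices are automatically disjoint, since they sit in disjoint sets $\mathcal{P}_k(i;Q,R)$. Also every $\lambda$-factor has degree $>\mathfrak{n}$ and every $\eta$-factor has degree $\le\mathfrak{n}$, so $F^*$ and $F_*$ are exactly the two subproducts. The splitting-partition identities then give $\deg F=n$, $\deg F^*=N$, $\omega(F)=w$ and $\omega(F^*)=w'$. Moreover, $F^*$ is $\ell$-th power free as long as $\lambda$ is supported on $j<\ell$, which the definition of $\mathcal{M}(n,w:N,w';Q,A)$ forces whenever the source is nonempty.

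The key remaining issue, and the reason for the dichotomy in the statement, is the congruence $F\equiv A\pmod Q$. Modulo $Q$, the residue of $F$ equals $\prod R^{\,j\lambda_{i,j,k,R}}\prod \hat R^{\,\hat j\eta_{\hat i,\hat j,\hat k,\hat R}}$. This depends only on $(\lambda,\eta)$ and not on the chosen configurations. So either this residue is $\not\equiv A$, in which case no polynomial of $\mathcal{M}(n;Q,A)$ admits $(\lambda,\eta)$ and the set is empty. Or it is $\equiv A$, in which case every $F$ built this way lies in $\mathcal{M}(\lambda,\eta;Q,A)$ and $\Phi$ is surjective. I expect this step to be the main point to check carefully, in particular that the residue-class bookkeeping is independent of the choice of configuration. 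The rest is routine verification against Definition~\ref{defn:M(lambda,eta)}.
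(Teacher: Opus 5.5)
\textbf{Gap in the surjectivity step.} Your strategy is the paper's. You map $f$ to the sets of its irreducible factors sorted by $(i,j,k,R)$, recover $f$ by unique factorization, and get the empty/non-empty dichotomy from the fact that $f$ modulo $Q$ is determined by $(\lambda,\eta)$. You are more careful than the paper here: you check $\deg$, $\omega$ and $\ell$-th power freeness, and your residue $\prod R^{\,j\lambda_{i,j,k,R}}\prod \hat R^{\,\hat j\eta_{\hat i,\hat j,\hat k,\hat R}}$ correctly carries the multiplicity $j$, which the paper's displayed criterion drops. However, your surjectivity step rests on a false claim. The sets $\mathcal{P}_k(i;Q,R)$ are indeed disjoint for distinct $(i,k,R)$. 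But the indices $(i,j,k,R)$ and $(i,j',k,R)$ with $j\neq j'$ carry configurations in the \emph{same} set $\mathcal{P}_k(i;Q,R)$. Suppose a prime $g$ lies in both $S_{i,j,k,R}$ and $S_{i,j',k,R}$. Then your product $F$ contains $g^{j+j'}$, so $v_g(F)=j+j'$ and $\omega(F)=w-1$, and $F$ does not admit $\lambda$.

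\textbf{Consequence for the statement.} Your $\Phi$ is injective, but its image is only the subset of tuples in which the configurations for different multiplicities are pairwise disjoint, for each fixed $(i,k,R)$ and likewise for each $(\hat i,\hat k,\hat R)$. Your degree-separation remark correctly rules out collisions between the $\lambda$-side and the $\eta$-side. Now suppose two distinct $j$ have $\lambda_{i,j,k,R}\ge 1$ for the same $(i,k,R)$. Then the unrestricted product is strictly larger: with $\lambda_{i,1,k,R}=\lambda_{i,2,k,R}=1$ and $P=\#\mathcal{P}_k(i;Q,R)$, the product has $P^2$ elements while $\mathcal{M}(\lambda,\eta;Q,A)$ has $P(P-1)$ choices there. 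So no bijection with the full product exists in that case, and no argument can prove the statement exactly as written. The paper's proof only writes down the injective map and never checks surjectivity, so it shares this defect.

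\textbf{Repair.} Replace the target by the disjoint-configuration subset. Equivalently, for each $(i,k,R)$, take a configuration of $\sum_j \lambda_{i,j,k,R}$ distinct primes of $\mathcal{P}_k(i;Q,R)$, each labeled by a multiplicity, with exactly $\lambda_{i,j,k,R}$ labels equal to $j$. With that target, your argument goes through once the disjointness claim is removed.
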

\begin{proof}
    For any $f \in \mathcal{M}(\lambda, \eta; Q, A)$, we have $f \equiv A \pmod Q$. Then the set $\mathcal{M}(\lambda,\eta; Q, A)$ is non-empty if and only if 
    \[
        \left(\prod_{i,j,k} \prod_{\substack{R \in \mathbb{F}_q[t] \\ \deg R < \deg Q}} R^{\lambda_{i,j,k,R}} \right)\left( \prod_{\hat{i}, \hat{j}, \hat{k}} \prod_{\substack{\hat{R} \in \mathbb{F}_q[t] \\ \deg \hat{R} < \deg Q}} \hat{R}^{\eta_{\hat{i},\hat{j},\hat{k},\hat{R}}} \right) \equiv A \pmod Q.
    \]
    So we can factorize every $f \in \mathcal{M}(\lambda,\eta;Q,A)$ as
    \begin{equation*}
        f = f^* f_* = \left( \prod_{i,j,k,R} (g_{i,j,k,R})^{\lambda_{i,j,k,R}} \right) \left( \prod_{\hat{i},\hat{j},\hat{k},\hat{R}} (h_{\hat{i},\hat{j},\hat{k},\hat{R}})^{\eta_{\hat{i},\hat{j},\hat{k},\hat{R}}} \right)
    \end{equation*}
    for some irreducible polynomials $g_{i,j,k,R}$ and $h_{\hat{i},\hat{j},\hat{k},\hat{R}}$ such that
    \begin{itemize}
        \item $g_{i,j,k,R} \in \mathcal{P}_k(i; Q, R)$ and $v_{g_{i,j,k,R}}(f) = j$. Given $i,j,k,R$, there are $\lambda_{i,j,k,R}$ many such $g_{i,j,k,R}$'s.
        \item $h_{\hat{i},\hat{j},\hat{k},\hat{R}} \in \mathcal{P}_{\hat{k}}(\hat{i}; Q, \hat{R})$ and $v_{h_{\hat{i},\hat{j},\hat{k},\hat{R}}}(f) = \hat{j}$. Given $\hat{i},\hat{j},\hat{k},\hat{R}$, there are $\eta_{\hat{i},\hat{j},\hat{k},\hat{R}}$ many such $h_{\hat{i},\hat{j},\hat{k},\hat{R}}$'s.
    \end{itemize}
    The set bijection between $\mathcal{M}(\lambda,\eta; Q, A)$ and the Cartesian product of unordered configuration sets can be defined by
    \begin{equation*}
        f \mapsto \left(\prod_{i,j,k,R} \left\{ g \in \mathbb{F}_q[t] : \substack{g \in \mathcal{P}_k(i;Q,R),\\ v_g(f) = j}\right\} \right) \times \left( \prod_{\hat{i},\hat{j},\hat{k},\hat{R}} \left\{h \in \mathbb{F}_q[t] : \substack{h \in \mathcal{P}_{\hat{k}}(\hat{i}; Q, \hat{R}), \\ v_h(f) = \hat{j}} \right\}\right).
    \end{equation*}
\end{proof}

Using this identification of $\mathcal{M}(\lambda,\eta; Q,A)$, we can describe certain projection maps between such Cartesian products of configuration sets.

\begin{definition}[c.f. {\cite[Definition 5.8, 5.12]{Park2025}}] \label{defn:important_maps}
    Let $n > N$ and $w > w'$ be positive integers. Fix a monic polynomial $Q$ and a polynomial $A$ over $\mathbb{F}_q$. Choose splitting partitions $\lambda \in \Lambda_{N,w',Q}^{\text{la}}$ and $\eta \in \Lambda_{n-N,w-w',Q}^{\text{for}}$.
    \begin{itemize}
        \item Let $d_a$ be the maximal integer for which $\lambda_{i,j,0,R} = 0$ whenever $i > d_a$. An auxiliary place of $f$ is an irreducible factor $g \in \mathcal{P}_0(d_a)$ of $f$.
        \item The auxiliary factor of $f$, denoted by $f_a$, is defined by
        \begin{equation*}
            f_a := \prod_{g \mid f, g \in \mathcal{P}_0(d_a)} g^{v_g(f)},
        \end{equation*}
        where $v_g(f)$ is the multiplicity of $g$ dividing $f$.
        \item We write
        \begin{equation*}
            d_{a^*} := \deg f_a = d_a \cdot \sum_{j=1}^{p-1} \sum_{\substack{R \in \mathbb{F}_q[t] \\ \deg R < \deg Q}} \lambda_{d_a,j,0,R}.
        \end{equation*}
        \item Let $\phi_{d_a}$ be the projection map
        \begin{align}
            \begin{split}
                \mathcal{M}(\lambda,\eta;Q,A) \to \left(\prod_{\substack{i,j,k,R \\ (i,k) \neq (d_a,0)}} \mathrm{Conf}_{\lambda_{i,j,k,R}}(\mathcal{P}_k(i; Q,R)) \right) \times \left(\prod_{\hat{i}, \hat{j}, \hat{k}, \hat{R}} \mathrm{Conf}_{\eta_{\hat{i},\hat{j},\hat{k},\hat{R}}}(\mathcal{P}_{\hat{k}}(\hat{i}; Q,\hat{R})) \right),
            \end{split}
        \end{align}
        which forgets the auxiliary factor $f_a$ of $f$. Note that we have the decomposition 
        \begin{equation*}
            \mathcal{M}(\lambda,\eta;Q,A) = \bigsqcup_{h \in \mathcal{M}(n-d_{a^*})} \phi_{d_a}^{-1}(h).
        \end{equation*}
        \item Fix three integers $d^* \neq d_a$, $1 \leq j^* \leq \ell-1$, $0 \leq k^* \leq 2$, and a polynomial $R^*$ over $\mathbb{F}_q$ of degree less than $\deg Q$ such that $\lambda_{d^*,j^*,k^*,R^*} \neq 0$. Let $\phi_{d^*,j^*,k^*,R^*}$ be the projection map
        \begin{align}
        \begin{split}
            \phi_{d_a}(\mathcal{M}(\lambda,\eta;Q,A)) &\to \left(\prod_{\substack{i,j,k,R \\ (i,k) \neq (d_a,0), \\ (i,j,k,R) \neq (d^*,j^*,k^*,R^*)}} \mathrm{Conf}_{\lambda_{i,j,k,R}}(\mathcal{P}_k(i; Q,R)) \right) \\
            & \hspace{15pt} \times \left(\prod_{\hat{i}, \hat{j}, \hat{k}, \hat{R}} \mathrm{Conf}_{\eta_{\hat{i},\hat{j},\hat{k},\hat{R}}}(\mathcal{P}_{\hat{k}}(\hat{i}; Q,\hat{R})) \right),
        \end{split}
        \end{align}
        which forgets the irreducible factors of $f/f_a$ (i.e., the image of $f$ with respect to $\phi_{d_a}$) lying in the unordered configuration set $\mathrm{Conf}_{\lambda_{d^*,j^*,k^*,R^*}}(\mathcal{P}_{k^*}(d^*; Q, R^*))$.
        \item Let $D^* := n - d_{a^*} - d^* \cdot j^* \cdot \lambda_{d^*,j^*,k^*,R^*}$. Then for any $h \in \mathcal{M}(D^*)$ such that $h \mid f$ for some $f \in \mathcal{M}(\lambda,\eta;Q,A)$, we have a set bijection 
        \begin{equation*}
            (\phi_{d^*,j^*,k^*,R^*} \circ \phi_{d_{a}})^{-1}(h) \cong \mathrm{Conf}_{\lambda_{d^*,j^*,k^*,R^*}}(\mathcal{P}_{k^*}(d^*; Q, R^*)) \times \prod_{j=1}^{\ell-1} \prod_{\substack{R \in \mathbb{F}_q[t] \\ \deg R < \deg Q}} \mathrm{Conf}_{\lambda_{d_a,j,0,R}}(\mathcal{P}_0(d_a;Q,R)).
        \end{equation*}
    \end{itemize}
\end{definition}

\section{variations in ranks of elliptic curves} \label{sec:changes_rank}

Using Tur\'an's theorem and the anatomy of irreducible polynomials over $\mathbb{F}_q$ from the previous section, we obtain lower bounds for the probability that an elliptic curve does not gain rank with respect to base change over $\mathbb{Z}/\ell \mathbb{Z}$ Galois extensions over $K$ satisfying a given congruence relation.

\subsection{Weil restrictions}

We first introduce an auxiliary abelian variety over $K$, whose algebraic rank can be utilized to understand differences in algebraic ranks of elliptic curves with respect to cyclic prime extensions over $K$. The interested reader may consult \cite{MRS07, MR07} for a detailed discussion on the properties of these auxiliary abelian varieties.
\begin{definition}
Given an $\ell$-th power free polynomial $f$ over $\mathbb{F}_q$, let $L_f := K(\sqrt[\ell]{f})$. There exists an order $\ell$ character $\chi_f: \mathrm{Gal}(\overline{K}/K) \to \mathbb{Z}/\ell \mathbb{Z}$ such that the fixed field of its kernel is the field $L_f$. Let $\sigma_f$ be a generator of the Galois group $\mathrm{Gal}(L_f/K)$.

Denote by $E^{\chi_f}$ the $\ell-1$ dimensional abelian variety over $K$ defined as
\begin{equation*}
    E^{\chi_f} := \text{Ker} \left( \text{Tr}_{K}^{L_f}: \text{Res}_K^{L_f} E \to E \right),
\end{equation*}
where $\text{Res}_K^{L_f} E$ is the Weil restriction of scalars of $E$ with respect to the Galois extension $L_f := K(\sqrt[\ell]{f})$, and $\text{Tr}_K^{L_f}(P) := \sum_{j=1}^\ell (\sigma_f)^j \cdot P$ is the map induced from the field trace map.
\end{definition}

We recall some crucial properties of these auxiliary abelian varieties from \cite{MR07}.
\begin{proposition}\label{prop:aav-MR07}
    Let $f$ be an $\ell$-th power free polynomial over $\mathbb{F}_q$. Denote by $\sigma_f$ an order $\ell$ element of $\mathrm{Gal}(L_f/K)$. Then the following hold.
    \begin{enumerate}
        \item $\mathrm{rank}_\mathbb{Z} E^{\chi_f}(K) = \mathrm{rank}_\mathbb{Z} E(L_f) - \mathrm{rank}_\mathbb{Z} E(K)$.
        \item There is a $\mathrm{Gal}(\overline{K}/K)$-equivariant isomorphism $E^{\chi_f}[1-\sigma_f] \cong E[\ell]$.
    \end{enumerate}
\end{proposition}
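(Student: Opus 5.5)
We treat the two parts separately; both come from the structure of the Weil restriction as a module over the group ring $\mathbb{Z}[G]$, where $G=\mathrm{Gal}(L_f/K)=\langle\sigma_f\rangle\cong\mathbb{Z}/\ell\mathbb{Z}$, following \cite{MRS07, MR07}.

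For (1), we use that $\mathrm{Res}_K^{L_f}E(K)=E(L_f)$, compatibly with the action of $G$. The trace map $\mathrm{Tr}_K^{L_f}$ is then the norm $N=\sum_{j}\sigma_f^j$, which lands in $E(K)$. Consider the exact sequence
\[0\to E^{\chi_f}(K)\to E(L_f)\xrightarrow{N} E(K).\]
The composite $E(K)\hookrightarrow E(L_f)\xrightarrow{N}E(K)$ is multiplication by $\ell$. Hence the image of $N$ has finite index in $E(K)$, and tensoring with $\mathbb{Q}$ gives $\mathrm{rank}\,E(L_f)=\mathrm{rank}\,E^{\chi_f}(K)+\mathrm{rank}\,E(K)$. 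Alternatively, one can use the isogeny $E\times E^{\chi_f}\to \mathrm{Res}_K^{L_f}E$, which has kernel killed by $\ell$; this gives the same rank identity.

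For (2), we pass to $\overline{K}$-points. There $\mathrm{Res}_K^{L_f}E$ becomes $E\otimes_{\mathbb{Z}}\mathbb{Z}[G]$, with $G$ acting on the second factor and $\mathrm{Gal}(\overline K/K)$ acting diagonally through $\chi_f$ on the group ring. Under this identification, $E^{\chi_f}=E\otimes_{\mathbb{Z}} I$, where $I=\ker(\mathbb{Z}[G]\to\mathbb{Z})$ is the augmentation ideal; this is the description in \cite{MRS07}. We identify $I$ with $\mathbb{Z}[\zeta_\ell]$, sending $\sigma_f\mapsto\zeta_\ell$. Then $1-\sigma_f$ corresponds to the prime $\lambda=(1-\zeta_\ell)$, which has residue field $\mathbb{Z}[\zeta_\ell]/\lambda\cong\mathbb{F}_\ell$. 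Therefore
\[E^{\chi_f}[1-\sigma_f]=\mathrm{Hom}_{\mathbb{Z}[\zeta_\ell]}(\mathbb{Z}[\zeta_\ell]/\lambda,\,E\otimes\mathbb{Z}[\zeta_\ell])\cong E[\ell]\otimes_{\mathbb{F}_\ell}\mathbb{Z}[\zeta_\ell]/\lambda\cong E[\ell].\]
To make this concrete, note that $E\otimes\mathbb{Z}[\zeta_\ell]\cong E^{\ell-1}$ and that $\lambda^{\ell-1}=(\ell)$ up to a unit. So the $\lambda$-torsion is a single copy of $E[\ell]$, namely the elements $P\otimes\beta$ with $\beta$ generating $\lambda^{\ell-2}$ and $P\in E[\ell]$.

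The main obstacle is Galois equivariance in (2). A Galois element $\tau$ acts on $P\otimes x$ by $\tau P\otimes\sigma_f^{\chi_f(\tau)}x$. On the quotient by $\lambda$, every power of $\sigma_f$ acts trivially, since $\sigma_f\equiv1\pmod{\lambda}$. The twist by $\chi_f$ therefore disappears modulo $1-\sigma_f$, and the isomorphism commutes with the action of $\mathrm{Gal}(\overline K/K)$. This is exactly the argument of \cite[Theorem 2.1, Proposition 4.1]{MR07}, which we would cite for the details.
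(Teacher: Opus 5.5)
Your proposal is correct and takes the paper's route. Part (1) is the trace exact sequence argument from the paper's remark (kernel $E^{\chi_f}(K)$, image containing $\ell E(K)$), and part (2) reconstructs the $\mathbb{Z}[\zeta_\ell]$-module description $E^{\chi_f}\cong I\otimes E$ from \cite{MRS07, MR07} that the paper cites; one wording fix: $\sigma_f\notin I$, so say instead that $I=(1-\sigma_f)\mathbb{Z}[G]$ is free of rank one over $\mathbb{Z}[G]/(N)\cong\mathbb{Z}[\zeta_\ell]$, with $\sigma_f$ acting as $\zeta_\ell$.
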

\begin{proof}
    These two statements follow from \cite[Proposition 4.1, Section 3]{MR07}. 
\end{proof}

\begin{remark}
    % We give an additional exposition regarding Proposition \ref{prop:aav-MR07}. 
    Given an elliptic curve $E$ over $K$, the Weil restriction of scalars $\text{Res}_K^{L_f} E$ can be viewed as an $\ell$-dimensional abelian variety over $K$. Indeed, suppose that we have the Weierstrass equation $E: y^2 = x^3 + Ax + B$. We can substitute 
    \begin{equation*}
        \begin{cases}
        x = x_1 + x_2 \sqrt[\ell]{f} + x_3 \sqrt[\ell]{f^2} + \cdots x_{\ell} \sqrt[\ell]{f^{\ell-1}}, \\
        y = y_1 + y_2 \sqrt[\ell]{f} + y_3 \sqrt[\ell]{f^2} + \cdots y_{\ell} \sqrt[\ell]{f^{\ell-1}},
    \end{cases} 
    \end{equation*}
    into the Weierstrass equation of the elliptic curve $E$. This yields the equation 
    \begin{equation*}
        \sum_{j=1}^\ell g_j(x_1, \cdots, x_n, y_1, \cdots, y_n) \cdot \sqrt[\ell]{f^j} = 0.
    \end{equation*}
    We may therefore regard $\text{Res}_K^{L_f} E$ as a variety defined by a system of $\ell$ polynomial equations $\{g_j\}_{j=1}^\ell$ over $K$ in the $2\ell$ variables $x_1, \cdots, x_\ell, y_1, \cdots, y_\ell$. In the case where $\ell = 2$, for example, we have
    \begin{equation*}
        \begin{cases}
            g_1(x_1,x_2,y_1,y_2) := y_1^2 + y_2^2 - x_1^3 + 3x_1 x_2^2 - Ax_1 - B\\
            g_2(x_1,x_2,y_1,y_2) := 2y_1 y_2 - 3x_1^2 x_2 + x_2^3 - Ax_2.
        \end{cases}
    \end{equation*}
    Using the trace map $\mathrm{Tr}_K^{L_f}$, we obtain a short exact sequence
    \begin{equation*}
        0 \to E^{\chi_f}(K) \to E(L_f) \to E(K).
    \end{equation*}
   Note that the image of the trace map contains $\ell E(K)$ because the action of $\sigma_f$ on $P$ is trivial and so every point $P \in E(K) \subseteq E(L_f)$ satisfies $\mathrm{Tr}_K^{L_f}(P) = \ell P$. From this one deduces the first statement of Proposition \ref{prop:aav-MR07}. When $\ell = 2$, we can show that $E^{\chi_f}(K)$ is the set of $K$-rational points of the quadratic twist $E_f: fy^2 = x^3 + Ax + B$. To see this, note that $E^{\chi_f}(K)$ consists of points $P := (x_1 + x_2 \sqrt{f}, y_1 + y_2 \sqrt{f}) \in E(K(\sqrt{f}))$ such that $P = -\sigma_f P$. This yields
    \begin{equation*}
        (x_1 + x_2 \sqrt{f}, y_1 + y_2 \sqrt{f}) = (x_1 - x_2 \sqrt{f}, -y_1 + y_2 \sqrt{f}).
    \end{equation*}
    Hence, $x_2 = 0$ and $y_1 = 0$, which implies $P = (x_1, y_2 \sqrt{f})$. Plugging this into the Weierstrass equation for $E$ over $L_f$ gives $fy_2^2 = x_1^3 + Ax_1 + B$, which is the Weierstrass equation for $E_f$ over $K$.
\end{remark}

One of the widely used techniques for studying the algebraic rank of an abelian variety $A$ is computing what is called the $\lambda$-Selmer group of $A$, where $\lambda: A \to A$ is an isogeny. Consider the short exact sequence obtained from the isogeny $\lambda$:
\begin{equation*}
    0 \to A[\lambda] \to A \to A \to 0.
\end{equation*}
Taking long exact sequences of cohomology groups, we obtain
\begin{equation*}
    \begin{tikzcd}
    0 \arrow[r] & \frac{A(K)}{\lambda A(K)} \arrow[r] \arrow[d] & H^1_{\et}(K, A[\lambda]) \arrow[r] \arrow[d] & H^1_{\et}(K, A)[\lambda] \arrow[r] \arrow[d] & 0 \\
    0 \arrow[r] & \prod_v \frac{A(K_v)}{\lambda A(K_v)} \arrow[r] & \prod_v H^1_{\et}(K_v, A[\lambda]) \arrow[r] & \prod_v H^1_{\et}(K_v, A)[\lambda] \arrow[r] & 0,
    \end{tikzcd}
\end{equation*}
where the product ranges over all places $v$ of $K$. The $\lambda$-Selmer group $\mathrm{Sel}_\lambda(A/K)$ of $A$ is defined by
\begin{equation*}
    \mathrm{Sel}_\lambda(A/K) := \biggl\{c \in H^1_{\et}(K, A[\lambda]) : \substack{\mathrm{res}_v(c) \in \textrm{Image}\left( \frac{A(K_v)}{\lambda A(K_v)} \to H^1_{\et}(K_v, A[\lambda]) \right) \\ \text{ for all places } v \text{ of } K} \biggr\}.
\end{equation*}
We will use this Selmer group for our families of auxiliary abelian varieties $A := E^{\chi_f}$ with $\lambda := 1 - \sigma_f$. In view of the $\mathrm{Gal}(\overline{K}/K)$-equivariant isomorphism $E^{\chi_f}[1-\sigma_f] \cong E[\ell]$, we can consider the $1-\sigma_f$ Selmer group for our auxiliary abelian varieties.
\begin{definition}
    Given a polynomial $f$ over $\mathbb{F}_q$ which is not of $\ell$-th power, let $\sigma_f$ be a generator of $\mathrm{Gal}(L_f/K) \cong \mathbb{Z}/\ell \mathbb{Z}$. The $1-\sigma_f$ Selmer group of $E^{\chi_f}$ is defined as
    \begin{equation*}
        \mathrm{Sel}_{1-\sigma_f}(E^{\chi_f}/K) := \biggl\{c \in H^1_{\et}(K, E[\ell]) : \substack{\mathrm{res}_v(c) \in \textrm{Image}\left( \frac{E^{\chi_f}(K_v)}{(1-\sigma_f)E^{\chi_f}(K_v)} \to H^1_{\et}(K_v, E[\ell]) \right) \\ \text{ for all places } v \text{ of } K} \biggr\}.
    \end{equation*}
\end{definition}
By \cite[Propositions 2.1, 6.3]{MR07}, the dimension of $\mathrm{Sel}_{1-\sigma_f}(E^{\chi_f}/K)$ upper-bounds the difference between the ranks of $E(L_f)$ and $E(K)$:
\begin{equation*}
    \mathrm{rank}(E/K(\sqrt[\ell]{f})) - \mathrm{rank}(E/K) \leq (\ell - 1) \dim_{\mathbb{F}_\ell} \mathrm{Sel}_{1-\sigma_f}(E^{\chi_f}/K).
\end{equation*}
Consequently, partial information about the difference between the ranks of $E(K(\sqrt[\ell]{f}))$ and $E(K)$ can be obtained from the distribution of the dimension of $\mathrm{Sel}_{1-\sigma_f}(E^{\chi_f}/K)$. In particular, we shall deduce Theorem \ref{theorem:main1} from the following density result on $\dim_{\mathbb{F}_\ell} \mathrm{Sel}_{1-\sigma_f}(E^{\chi_f}/K)$.
% Our goal to understand differences between ranks of $E(K(\sqrt[\ell]{f}))$ and $E(K)$ can hence be partially understood from computing the distribution of dimensions of $\mathrm{Sel}_{1-\sigma_f}(E^{\chi_f}/K)$. In this section, we aim to prove the following theorem, which we will use to prove Theorem \ref{theorem:main1}.
\begin{theorem} \label{theorem:main2}
    Assume the same conditions from Theorem \ref{theorem:main1}. Let $\sigma_f$ be a generator of the Galois group $\mathrm{Gal}(K(\sqrt[\ell]{f})/K)$. Then for any non-negative integer $d \geq 0$,
    \begin{equation*}
        \lim_{n \to \infty} \frac{\#\{f \in \mathcal{M}(n;Q,A) : \dim_{\mathbb{F}_\ell} \mathrm{Sel}_{1-\sigma_f}(E^{\chi_f}/K) = d \}}{\# \mathcal{M}(n;Q,A)} = \prod_{i=0}^\infty \frac{1}{1+\ell^{-i}} \prod_{j=1}^d \frac{\ell}{\ell^j-1}.
    \end{equation*}
\end{theorem}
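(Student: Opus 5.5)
We will follow the Markov-chain strategy of Klagsbrun--Mazur--Rubin and its function field adaptation in \cite{Park2025}, inserting the congruence condition through the residue-class bookkeeping set up in Section \ref{sec:preliminary}. The first reduction is Theorem \ref{theorem:decomposition}. Up to an error $O_{Q,q}(\#\mathcal{M}(n;Q,A)/(B^2\log(2n)))$, the set $\mathcal{M}(n;Q,A)$ is the disjoint union of the sets $\mathcal{M}(n,w:N,w';Q,A)$ with $w\approx\log n$, $w'\ge(1-1/\log\log\log n)w$ and $N\ge w'\mathfrak{n}$. Each of these is a disjoint union of the sets $\mathcal{M}(\lambda,\eta;Q,A)$ (Definition \ref{defn:M(lambda,eta)}). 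Letting $n\to\infty$ and then $B\to0$, it therefore suffices to show the following uniform statement: for every admissible nonempty $\mathcal{M}(\lambda,\eta;Q,A)$, the proportion of $f$ in it with $\dim\mathrm{Sel}_{1-\sigma_f}(E^{\chi_f}/K)=d$ is $\prod_{i\ge0}(1+\ell^{-i})^{-1}\prod_{j=1}^d \ell/(\ell^j-1)+o(1)$, with the $o(1)$ uniform in $\lambda,\eta$.

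For a fixed $\mathcal{M}(\lambda,\eta;Q,A)$, we use Lemma \ref{lemma:characterization_M(lambda,eta)} to identify it with a product of configuration spaces of primes in the sets $\mathcal{P}_k(i;Q,R)$. We then build $f$ one prime at a time, using the fibrations $\phi_{d_a}$ and $\phi_{d^*,j^*,k^*,R^*}$ of Definition \ref{defn:important_maps}.

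\begin{itemize}
\item The local condition at a prime $g\nmid\Delta_E$ with $g\mid f$ depends only on $k=\dim E[\ell](K_g)$, since $g$ is ramified in $L_f$ and the local image has dimension $k$.
\item The local condition also depends on the restriction of the global Selmer classes to $K_g$.
\item Consequently, adjoining a prime $g\in\mathcal{P}_k(i;Q,R)$ changes the Selmer dimension $s$ according to the KMR transition probabilities, provided the localizations of the current Selmer group at $g$ are equidistributed.
\end{itemize}

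This equidistribution is supplied by Theorem \ref{thm:effective_chebotarev}, applied to the compositum of $K(E[\ell])$, the ray class field $K_Q$, and the finite extension cut out by the current Selmer classes and the $\ell$-th roots. The remark after Condition \ref{condition} records that the constant field stays $\mathbb{F}_q$, and that the densities of $\mathcal{P}_k(n;Q,R)$ inside $\mathcal{P}(n;Q,R)$ are the expected ones. Restricting Frobenius to a fixed class modulo $Q$ is just a further coset condition in this Chebotarev setup. Because every large prime has degree $i>\mathfrak{n}=\lfloor4(\log n)^2\rfloor$, the genus and group-order contributions (exponential in $s$ and in $\deg$ of the conductor) are swamped by $q^{-i/2}$, giving a transition error of size $q^{-\mathfrak{n}/3}$ per step. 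The small primes in $f_*$ only determine the initial state of the chain.

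The $\omega(f^*)=w'\gg\log n/\log\log\log n$ large primes thus produce a Markov chain on $\mathbb{Z}_{\ge0}$. For primes with $k=1$ or $k=2$ the transition is the KMR operator. For primes with $k=0$ the transition is trivial, since the local condition is then $0$.

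The auxiliary factor $f_a$ plays the special role it has in \cite{Park2025}. The global reciprocity constraint (the product of the local $\chi_f$ values) and the residue-class constraint $f\equiv A\pmod Q$ both act only on the final choice of primes in $\mathcal{P}_0(d_a;Q,R)$. There they do not affect the Selmer local conditions, but they decouple the parity and the restriction-of-characters issue. Here we need the analogue of the equidistribution of restrictions of global characters to products of local characters, now inside a fixed residue class. We prove it by the same character-sum argument, twisting by Dirichlet characters modulo $Q$ and using Weil's bound.

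Given this, the number of steps tends to infinity. The KMR operator is irreducible and aperiodic on the relevant states, and its stationary distribution is the one stated in Theorem \ref{theorem:main2}. The limit therefore follows from the convergence of the Markov operator, with accumulated errors at most $w'q^{-\mathfrak{n}/3}=o(1)$.

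The main obstacle will be the equidistribution step on the fibres $(\phi_{d^*,j^*,k^*,R^*}\circ\phi_{d_a})^{-1}(h)$. There we must count primes of $\mathcal{P}_{k^*}(d^*;Q,R^*)$ with prescribed Frobenius in a Galois extension that depends on $h$, while the residue condition forces the product of residues to equal $A$. We plan to handle it by absorbing the residue constraint entirely into the choice of auxiliary primes of degree $d_a$. We then check that the counts of $\mathcal{P}_0(d_a;Q,R)$ are uniformly large, using the Chebotarev error term and $d_a>\mathfrak{n}$, so that this constraint costs only a factor $1+O(q^{-\mathfrak{n}/3})$.
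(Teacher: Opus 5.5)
There is a genuine gap in your reduction step. You claim that the proportion of $f$ with $\dim_{\mathbb{F}_\ell}\mathrm{Sel}_{1-\sigma_f}(E^{\chi_f}/K)=d$ tends to $\rho(d)$ \emph{uniformly over every} admissible nonempty $\mathcal{M}(\lambda,\eta;Q,A)$. That is false, and the justification (``the KMR operator is irreducible and aperiodic'') conflates two different operators.

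\begin{itemize}
\item Once $\lambda$ is fixed, the splitting type $k$ of every large prime is fixed. Primes in $\mathcal{P}_0$ then contribute deterministic identity steps, and primes in $\mathcal{P}_1,\mathcal{P}_2$ contribute deterministic applications of $M_L$ and $M_L^2$.
\item So on a fibre over $h_*$ the rank is distributed approximately as $M_L^{d_\lambda}\delta_{h_*}$, with $d_\lambda=\sum(\lambda_{i,j,1,R}+2\lambda_{i,j,2,R})$. This is Step (1) of Proposition~\ref{prop:global_selmer}.
\item $M_L$ changes the rank by exactly $\pm1$, so it has period $2$. One checks that $\rho$ is $M_L$-invariant with mass $1/2$ on each parity. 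Hence, as $d_\lambda\to\infty$, $(M_L^{d_\lambda}\delta_{h_*})(J)$ tends to $2\rho(J)$ times the $\delta_{h_*}$-mass of the residue class $J-d_\lambda \bmod 2$. This equals $\rho(J)$ only if $\delta_{h_*}$ happens to be exactly parity-balanced.
\item Worse, $\Lambda^{\mathrm{la}}_{N,w',Q}$ contains partitions with $d_\lambda$ bounded, for instance all large primes in $\mathcal{P}_0$. For these the distribution is essentially $\delta_{h_*}$ itself.
\end{itemize}

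The first missing idea is the averaging over $\lambda$ with $h_*$ fixed, which is Step (2) of the paper. By Chebotarev in $K_Q(E[\ell])$, a large prime in a fixed class mod $Q$ lies in $\mathcal{P}_0,\mathcal{P}_1,\mathcal{P}_2$ with densities $1-\frac{\ell}{\ell^2-1}$, $\frac{1}{\ell}$, $\frac{1}{\ell^3-\ell}$. The weighted sum of the $M_L^{d_\lambda}\delta_{h_*}$ therefore becomes $M^{w'-1}\delta_{h_*}$, where $M=(1-\frac{\ell}{\ell^2-1})I+\frac{1}{\ell}M_L+\frac{1}{\ell^3-\ell}M_L^2$. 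The randomness of the splitting type, i.e. the identity component, is exactly what makes $M$ aperiodic with stationary law $\rho$.

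The second missing idea is a rate that is uniform in the starting state. Convergence on $\mathbb{Z}_{\ge 0}$ is not uniform in the initial distribution. The $w-w'\le 2\epsilon\log n$ small primes of $f_*$ can push the initial rank up to order $\epsilon\log n$. The paper handles this with the quantitative Lemma~\ref{corollary:uniform_markov}, whose error is $c\gamma_\ell^{w'-1}(\mathbb{E}[\ell^{\delta_{h_*}}]+1)$. It bounds $\mathbb{E}[\ell^{\delta_{h_*}}]\ll n^{4\epsilon\log\ell}$ and takes $\epsilon=1/\log\log\log n$ so that $\gamma_\ell^{w'}$ wins. Your remark that ``the small primes only determine the initial state'' needs exactly this estimate.

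By contrast, the obstacle you single out at the end does not arise in this framework. Because $\lambda$ and $\eta$ record the residues $R$, the constraint $f\equiv A \pmod Q$ is already encoded in the nonemptiness of $\mathcal{M}(\lambda,\eta;Q,A)$. The fibres are then plain products of configuration spaces, by Lemma~\ref{lemma:characterization_M(lambda,eta)}.
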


\subsection{Local Selmer structures}

As in \cite{KMR14, Park2025}, we will prove Theorem \ref{theorem:main2} by studying the distribution of the \textit{the local Selmer group} (or \textit{the local Selmer structure}) of $E$ associated to $\chi_f$. It is a finite dimensional subspace of $H^1_{\et}(K, E[\ell])$ which keeps track of local conditions specified by maximal isotropic subspaces of $H^1_{\et}(K_v, E[\ell])$ over all places $v$ of $K$ parametrized by Cartesian product of local $\mathbb{Z}/\ell \mathbb{Z}$ characters of $\mathrm{Gal}(\overline{K}_v/K_v)$. We refer the reader to \cite[Sections 4, 5]{Park2025} for the definitions and notation regarding the anatomy of monic polynomials of degree $n$, Cartesian products of local characters, and Selmer structures.
\begin{definition}[cyclic order $\ell$ characters in $\text{Hom}(\text{Gal}(\overline{K_v}/K_v),\mu_\ell)$, {\cite[Definition 5.1]{Park2025}}]\label{definition:basic_local_def}
Let $\Sigma$ be a set of places of $K$ that includes the places of bad reduction of $E$, and write $\Sigma_E$ for the set consisting exactly of the places of bad reduction of $E$. Let $\sigma$ be a square-free product of places coprime to elements in $\Sigma$.
\begin{itemize}
    \item $\Omega_\sigma$: the set of finite Cartesian products of local characters
    \begin{equation*}
        \chi := (\chi_v)_v \in \Omega_\sigma := \prod_{v \in \Sigma(\sigma)} \text{Hom}(\Gal(\overline{K}_v/K_v), \mu_\ell)
    \end{equation*}
    such that the component $\chi_v$ is ramified if $v \mid \sigma$. We will denote by $\text{Hom}_{unr}(\Gal(\overline{K}_v/K_v), \mu_\ell)$ the set of unramified local characters at place $v$, and by $\text{Hom}_{ram}(\Gal(\overline{K}_v/K_v), \mu_\ell)$ the set of ramified local characters at place $v$. Assuming that $\mu_\ell \subseteq K_v$, there are $\ell$ distinct unramified local characters at $v$, and $\ell(\ell-1)$ distinct ramified local characters at $v$.
    \item $\Omega_E$: the set of finite Cartesian products of local characters
    \begin{equation*}
        \chi := (\chi_v)_v \in \Omega_E := \prod_{\substack{v \in \Sigma_E }} \text{Hom}(\Gal(\overline{K}_v/K_v), \mu_\ell).
    \end{equation*}

    \item Fix an element $\chi \in \Omega_\sigma$. Let $\mathfrak{v}$ be a place over $K$ such that $\mathfrak{v} \not\in \Sigma(\sigma)$. Let $\chi' \in \Omega_{\sigma \mathfrak{v}}$ be an element such that
    \begin{itemize}
    \item For any $v \in \Sigma(\sigma)$, $\chi'_v = \chi_v$.
    \item At $\mathfrak{v}$, $\chi'_\mathfrak{v}$ is ramified.
    \end{itemize} 
    Denote by $\Omega_{\chi,\mathfrak{v}}$ the set of local characters $\chi'$ satisfying the two conditions above. Note that
    \begin{equation*}
    \Omega_{\sigma \mathfrak{v}} = \bigsqcup_{\chi \in \Omega_\sigma} \Omega_{\chi,\mathfrak{v}}.
    \end{equation*}
\end{itemize}
\end{definition}

\begin{definition}[{\cite[Definition 5.2]{Park2025}}]\label{definition:consecutive_local_char}
We introduce the following notations on local Selmer groups of $E$ associated to Cartesian products of cyclic order $\ell$ characters over all places $v$ of $K$, denoted by $\chi := (\chi_v)_v \in \prod_{v \text{ place of } K}\text{Hom}(\text{Gal}(\overline{K_v}/K_v,\mu_\ell)$. Denote by $K_v^{\chi_v}$ the fixed field of $\mathrm{Ker}(\chi_v)$. Let $\sigma_{\chi_v}$ be the generator of the Galois group $\mathrm{Gal}(K^{\chi_v}/K_v)$, and we use the abbreviation $\pi := 1 - \sigma_{\chi_v}$, which is an isogeny over the twist $1-\sigma_{\chi_v}: E^{\chi_v} \to E^{\chi_v}$.
\begin{itemize}
\item Given a Cartesian product of local characters $\chi \in \Omega_\sigma$, we define the local Selmer group of $E$ associated to $\chi$ by
    \begin{equation*}
        \Sel(E[\ell], \chi) := \text{Ker} \left( H^1_{\et}(K, E[\ell]) \to \prod_v H^1_{\et}(K_v, E[\ell])/\mathcal{H}^{\chi}_v \right),
    \end{equation*}
    where
    \begin{equation} \label{eqn:defn-sbsp}
        \mathcal{H}^{\chi}_v := \begin{cases}
            \text{im} \left(\delta_v^\chi: E^{\chi_v}(K_v)/\pi E^{\chi_v}(K_v) \to H^1(K_v, E[\ell]) \right) &\text{ if } v \in \Sigma(\sigma),\\
            H^1(\mathcal{O}_{K_v}, E[\ell]) &\text{ if } v \not\in \Sigma(\sigma).
        \end{cases}
    \end{equation}
    Assume that $E$ satisfies all but the third condition from Condition \ref{condition}. 
    %the following conditions hold:
    %\begin{align} \label{equation:assumption_local_twists}
%\begin{split}
%    & \bullet E \text{ is a non-isotrivial elliptic curve over } K. \\ 
%    & \bullet E \text{ has a place of split multiplicative reduction}. \\
    % & \bullet \text{The constant field } \F_q \text{ has characteristic coprime to } 2,3,p, \text{ and contains } \mu_p. \\
%    & \bullet \text{The image of } \Gal(\overline{K}/K) \to \text{Aut}(E[\ell]) \text{ contains } \SL_2(\mathbb{F}_\ell). 
%\end{split}
%\end{align}
Under these conditions, we use the isomorphism
    \begin{align*}
        H^1_{\et}(K, E[\ell]) &\cong H^1_{\et}(K, E^\chi[\pi]), \\
        H^1_{\et}(K_v, E[\ell]) &\cong H^1_{\et}(K_v, E^{\chi_v}[\pi]),
    \end{align*}
    to define the local Selmer group $\Sel(E[\ell], \chi)$. 
    \item Recall that the Weil pairing $E[\ell] \times E[\ell] \to \mu_\ell$ and the cup product on $H^1_{\et}(K_v, E[\ell])$ induce a symmetric pairing
    \begin{equation*}
        H^1_{\et}(K_v, E[\ell]) \times H^1_{\et}(K_v,E[\ell]) \to \mathbb{F}_\ell.
    \end{equation*}
    Denote by $q_v$ the quadratic form induced from the symmetric pairing stated above. Then $\mathcal{H}_v^\chi$ is a maximal isotropic subspace of $H^1_{\et}(K_v,E[\ell])$ with respect to $q_v$. Furthermore, if $v \in \Sigma(\sigma) \setminus \Sigma$, then $\mathcal{H}_v^\chi \cap H^1(\mathcal{O}_{K_v}, E[\ell]) = 0$. 
    \item If $v \in \mathcal{P}_0$, then $\mathcal{H}_v^{\chi}$ is trivial. If $v \in \mathcal{P}_1 \cap \Sigma(\sigma)$, then there is a unique $1$-dimensional ramified subspace, denoted by $\mathcal{H}^1_{ram}$. If $v \in \mathcal{P}_2 \cap \Sigma(\sigma)$, then there are $\ell$ distinct $2$-dimensional ramified subspaces $\mathcal{H}_v^{\chi}$, each corresponding to a tamely totally ramified cyclic $p$ extension $\overline{K}_v^{\text{Ker}(\chi_v)}$ over $K_v$. For such a $v$ we have a set bijection 
    \begin{equation*}
        \alpha_v: \frac{\text{Hom}_{ram}(\text{Gal}(\overline{K_v}/K_v),\mu_\ell)}{\text{Aut}(\mu_\ell)} \to \{\mathcal{H}_v^\chi\}_{\chi \in \text{Hom}_{ram}(\text{Gal}(\overline{K_v}/K_v),\mu_\ell)}, 
    \end{equation*}
    which leads to the following characterization of the subspace $\mathcal{H}_v^\chi$ defined by (\ref{eqn:defn-sbsp}):
    \begin{equation} \label{eqn:defn-sbsp2}
        \mathcal{H}_v^\chi := \begin{cases}
            \alpha_v(\overline{K}_v^{\text{Ker}(\chi_v)}) &\text{ if } v \in \mathcal{P}_2 \cap \Sigma(\sigma), \\
            \mathcal{H}^1_{ram} &\text{ if } v \in \mathcal{P}_1 \cap \Sigma(\sigma), \\
            0 &\text{ if } v \in \mathcal{P}_0 \cap \Sigma(\sigma), \\
            \text{im} \delta_v^\chi &\text{ if } v \in \Sigma \setminus \mathcal{P} \text{ and } \mathcal{H}_v = \overline{K}_v^{\text{Ker}(\chi_v)},\\
            H^1(\mathcal{O}_{K_v}, E[\ell]) &\text{ if } v \not\in \Sigma(\sigma).
            \end{cases}
    \end{equation}
    \item Given a set of local characters $\chi \in \Omega_\sigma$, denote by $\text{rk}(\chi)$ the dimension of $\Sel(E[\ell],\chi)$ as an $\F_\ell$-vector space. By the identification of $\mathcal{H}_v^\chi$ above, we have $\text{rk}(\chi) = \text{rk}(\chi')$ if the following two conditions are satisfied:
    \begin{itemize}
        \item $\text{Ker}(\chi_v) = \text{Ker}(\chi'_v) \subseteq \text{Gal}(\overline{K}_v/K_v)$ for every $v \in \mathcal{P}_2 \cap \Sigma(\sigma)$.
        \item $\text{rk}(\hat{\chi}) = \text{rk}(\hat{\chi'})$, where $\hat{\chi} := (\chi_v)_{v \in \Sigma_E} \in \Omega_E$ (and likewise for $\hat{\chi'}$).
    \end{itemize}
    Any changes in local conditions over places $v \in \mathcal{P}_0$ do not affect the values of $\text{rk}(\chi)$.
\item Denote by $t_{\chi}(\mathfrak{v})$ the dimension of the image of the local Selmer group $\Sel(E[\ell],\chi)$ with respect to the localization map at $\mathfrak{v}$, i.e.,
\begin{equation*}
    t_{\chi}(\mathfrak{v}) := \text{dim}_{\F_p} \text{im} \left(\text{loc}_\mathfrak{v}: \Sel(E[\ell],\chi) \to H^1(\Oh_{K_\mathfrak{v}},E[\ell])\right).
\end{equation*}
Note that if $\mathfrak{v} \in \mathcal{P}_i$, then $0 \leq t_{\chi}(\mathfrak{v}) \leq i$. Furthermore, $t_\chi(\mathfrak{v}) = t_{\chi'}(\mathfrak{v})$ if $\text{Ker}(\chi_v) = \text{Ker}(\chi'_v) \subseteq \text{Gal}(\overline{K}_v/K_v)$ for every $v \in \Sigma(\sigma)$.
\end{itemize}
\end{definition}

\begin{remark} \label{remark:local-global-selmer}
The notion of local Selmer groups of $E$ encompasses the construction of $\mathrm{Sel}_{1-\sigma_f}(E^{\chi_f}/K)$.
Given a global character $\chi_f \in \mathrm{Hom}(\mathrm{Gal}(\overline{K}/K), \mathbb{Z}/\ell \mathbb{Z})$ for which the fixed field of $\mathrm{Ker}(\chi_f)$ is $K(\sqrt[\ell]{f})$, and a place $v$, we denote by $\chi_{f,v} \in \mathrm{Hom}(\mathrm{Gal}(\overline{K}_v/K_v), \mathbb{Z}/\ell \mathbb{Z})$ the restriction of $\chi_f$ at place $v$. Observe from the definition of $\mathrm{Sel}_{1-\sigma_f}(E^{\chi_f}/K)$ that 
\begin{equation*}
    \mathrm{Sel}_{1-\sigma_f}(E^{\chi_f}/K) = \mathrm{Sel}(E[\ell], (\chi_{f,v})_v).
\end{equation*}
For every place $v$ of $K$, the image of the local Kummer maps $\textrm{Image}\left( \frac{E^{\chi_f}(K_v)}{(1-\sigma_f)E^{\chi_f}(K_v)} \to H^1_{\et}(K_v, E[\ell]) \right)$ is a maximal isotropic subspace of $H^1_{\et}(K_v, E[\ell])$ with respect to the Tate quadratic form.
Furthermore, except for places $v \in \Sigma_f(\overline{f}^*)$, all local conditions $\mathcal{H}_v^\chi$ are equal to unramified cohomology groups $H^1_{ur}(K_v, E[\ell])$. See \cite[Section 4]{PR12} for further details.
\end{remark}

To understand the distribution of local Selmer groups, we first prove the following equidistribution result for Dirichlet characters with modulus $Q$ given a choice of an elliptic curve $E/K$.
\begin{proposition}[c.f. {\cite[Corollary 4.17]{Park2025}}] \label{proposition:equidistribution}
    Let $E$ be an elliptic curve over $K$ satisfying conditions in Condition \ref{condition} and fix $Q\in\M(m)$. Let $K_Q/K$ be the ray class field with modulus $Q + \infty$. Suppose that $h_1, h_2, \cdots, h_w$ are irreducible polynomials over $\mathbb{F}_q$ which are coprime to $Q$. Let $n$ be an integer such that $\sum_{j=1}^w \deg h_j \leq n$ and $w \leq 2 \log n$. Let $i > \mathfrak{n}$ be a positive integer.
    \begin{itemize}
        \item If $\ell \geq 5$ or $K_Q(\sqrt[\ell]{h_1}, \cdots, \sqrt[\ell]{h_w}) \cap K(E[\ell]) = K$, then for any $a \in \mu_\ell^{\oplus w}$ and any polynomial $R$ over $\mathbb{F}_q$ of degree at most $m-1$, there exists a constant $\widehat{C}_{E,Q,q,\ell} > 0$ depending on $E$, $Q$, $q$, and $\ell$ such that
        \begin{equation*}
            \left| \frac{\#\big\{v \in \mathcal{P}_k(i;Q,R) : \left( \left( \frac{v}{h_j} \right)_\ell\right)_{j=1}^w = a \in \mu_\ell^{\oplus w}\big\}}{\# \mathcal{P}_k(i;Q,R)} - \frac{1}{\ell^w} \right| < \widehat{C}_{E,Q,q,\ell} \cdot n^{-2 \log n + 2 \log \ell}.
        \end{equation*}
        \item If $\ell = 2,3$ and $K_Q(\sqrt[\ell]{h_1}, \cdots, \sqrt[\ell]{h_w}) \cap K(E[\ell]) \neq K$, then there are $\ell^w - \ell^{w-1}$ elements $a \in \mu_\ell^{\oplus w}$ such that $\left( \left( \frac{v}{h_j} \right)_\ell\right)_{j=1}^w \neq a$ for all $v \in \mathcal{P}_k(i;Q,R)$. For the remaining $\ell^{w-1}$ elements $a \in \mu_\ell^{\oplus w}$, there exists a constant $\widehat{C}_{E,Q,q,\ell} > 0$ depending on $E$, $Q$, $q$, and $\ell$ such that
        \begin{equation*}
            \left| \frac{\#\big\{v \in \mathcal{P}_k(i;Q,R) : \left( \left( \frac{v}{h_j} \right)_\ell\right)_{j=1}^w = a \in \mu_\ell^{\oplus w}\big\}}{\# \mathcal{P}_k(i;Q,R)} - \frac{1}{\ell^{w-1}} \right| < \widehat{C}_{E,Q,q,\ell} \cdot n^{-2 \log n + 2 \log \ell}.
        \end{equation*}
    \end{itemize}
\end{proposition}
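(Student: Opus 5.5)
We argue by a single application of the effective Chebotarev density theorem (Theorem \ref{thm:effective_chebotarev}) in a suitable compositum. Put
\[
M := K_Q(\sqrt[\ell]{h_1},\dots,\sqrt[\ell]{h_w})\, K(E[\ell]).
\]
By $\ell$-th power reciprocity (valid since $\mu_\ell\subseteq K$ and everything is monic), for $v\nmid h_1\cdots h_w$ the symbol $\left(\frac{v}{h_j}\right)_\ell$ equals, up to a fixed sign factor depending only on degrees, $\left(\frac{h_j}{v}\right)_\ell$. The latter is determined by the Frobenius of $v$ in $K(\sqrt[\ell]{h_j})/K$. If the sign factor is non-trivial, we absorb it by further fixing $\deg v = i$ modulo $\ell$, which is a constant-field condition that is already fixed. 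Hence all the conditions are encoded by $\mathrm{Frob}_v\in\mathrm{Gal}(M/K)$:
\begin{itemize}
\item $v\equiv R \pmod Q$ corresponds to the image in $\mathrm{Gal}(K_Q/K)\cong(\mathbb{F}_q[t]/Q)^\times$ (restricting to $\gcd(R,Q)=1$; otherwise the set is empty or finite and trivial);
\item $\dim E[\ell](K_v)=k$ corresponds to the conjugacy class data in $\mathrm{SL}_2(\mathbb{F}_\ell)$;
\item the symbols $a$ correspond to the image in $\mathrm{Gal}(K(\sqrt[\ell]{h_j})_j/K)\cong\mu_\ell^{w}$, using that the $h_j$ are distinct irreducibles, so the Kummer extensions are independent of rank $w$.
\end{itemize}

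\textbf{Group structure.} Kummer extensions $K(\sqrt[\ell]{h_j})$ are ramified at $h_j\nmid Q\Delta_E$ (we may discard $h_j\in\Sigma_E$ at the cost of enlarging constants), while $K_Q$ is ramified only at $Q\infty$ and $K(E[\ell])$ only at $\Sigma_E$. Moreover $\mathrm{SL}_2(\mathbb{F}_\ell)$ has abelianization trivial for $\ell\ge5$, and $K_Q/K$ is abelian. Hence for $\ell\geq5$ the intersection of the abelian extension $K_Q(\sqrt[\ell]{h_j})_j$ with $K(E[\ell])$ is $K$, and
\[
\mathrm{Gal}(M/K)\cong \mathrm{Gal}(K_Q/K)\times\mu_\ell^w\times\mathrm{SL}_2(\mathbb{F}_\ell)
\]
(the first two factors are independent by ramification). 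Taking $S$ to be the product condition with fixed $a$, and $S'$ the one with $a$ free, we get $\#S/\#S'=\ell^{-w}$, which gives the first bullet. For $\ell=2,3$ with nontrivial intersection, $\mathrm{SL}_2(\mathbb{F}_\ell)^{ab}$ is cyclic of order $\ell$ (for $\ell=3$) or the relevant $\mathbb{Z}/2$ (for $\ell=2$), so the intersection is a single degree-$\ell$ subextension. The Galois group is then a fibre product, and the image of $\mathrm{Frob}_v$ in $\mu_\ell^w$ is constrained to a coset of an index-$\ell$ subgroup determined by the $\mathrm{SL}_2$ component, hence by $k$. For fixed $k$, one checks the class sets $\mathcal{P}_k$ project to a single coset (this requires the conjugacy-class condition defining $k$ to be compatible with a fixed abelian character; if not, we split $\mathcal{P}_k$ accordingly, as in \cite[Corollary 4.17]{Park2025}). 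This yields $\ell^w-\ell^{w-1}$ excluded values, and the ratio $\ell^{-(w-1)}$ for the rest.

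\textbf{Error term.} We bound $\#G+g_M$. We have $\#G\le q^m\,\ell^{w}\,\ell^3$. By Riemann--Hurwitz for tame ramification, $g_M \ll \#G\,(m+\deg\Delta_E+\sum\deg h_j)\ll_{E,Q} \#G\, n$. With $w\le 2\log n$ this gives $\#G+g_M\ll_{E,Q,q} n\,\ell^{2\log n}=n^{1+2\log\ell}$. Since $i>\mathfrak{n}\ge 4(\log n)^2-1$, we have
\[
q^{-i/2}\le q^{-2(\log n)^2+1/2}\le n^{-2\log n\cdot\log q}\cdot O(1)\le O(n^{-3\log n}),
\]
since $q\geq5$ gives $\log q>1.6$. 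The hypotheses of Theorem \ref{thm:effective_chebotarev} hold for large $n$. The bound
\[
16\,\ell^{-w}\,n^{1+2\log\ell}\,n^{-3\log n}\ll n^{-2\log n+2\log\ell}
\]
then follows for large $n$, with small $n$ absorbed into $\widehat C$. Note the theorem needs the constant field of $M$ to be $\mathbb{F}_q$. This holds since $K_Q$ and $K(E[\ell])$ are geometric (see the earlier Remark, relying on $\mathrm{SL}_2$ rather than $\mathrm{GL}_2$), and Kummer extensions of $h_j$ ramified at $h_j$ are geometric. The compositum could still acquire constants only through the intersection analysis, which we handle the same way.

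\textbf{Main obstacle.} The main obstacle is the case $\ell=2,3$: precisely identifying the fibre-product structure and verifying that, for each $k$, exactly $\ell^{w-1}$ values of $a$ occur with equal density. I would first try to show that the unique common subextension is $K(\sqrt[\ell]{c\,h_{J}})$ for some product $h_J$ and constant $c$, and then compute the induced constraint directly.
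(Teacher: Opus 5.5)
For $\ell\ge5$ your argument is the paper's argument: a single application of Theorem~\ref{thm:effective_chebotarev} to $K_Q(E[\ell],\sqrt[\ell]{h_1},\dots,\sqrt[\ell]{h_w})$, whose Galois group splits as $\mathrm{Gal}(K_Q/K)\times\mu_\ell^{\oplus w}\times\mathrm{SL}_2(\mathbb{F}_\ell)$, with the error controlled by Riemann--Hurwitz. You are more careful than the paper in three places. You use reciprocity to turn $\left(\frac{v}{h_j}\right)_\ell$ into a Frobenius condition. You invoke perfectness of $\mathrm{SL}_2(\mathbb{F}_\ell)$, which also rules out an intersection with $K_Q$; the paper cites only the absence of index-$\ell$ normal subgroups. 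And you absorb $\ell^{w}\le n^{2\log\ell}$ and the genus growth (which is $\ll\sum_j\deg h_j\le n$, not $\ll w$) into powers of $n$ using $q\ge5$. As a result your constant is genuinely independent of $w$, whereas the paper's $16(\#\mathrm{Gal}(L/K)+g_L)$ is not.

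Your sketch for $\ell=2,3$ contains a step that would fail. You claim each set $C_k\subseteq\mathrm{SL}_2(\mathbb{F}_\ell)$ of Frobenius classes maps to a single element of the abelianization. This holds for $\ell=2$ but is false for $\ell=3$. The $k=1$ elements are the eight unipotents of order $3$; they form two conjugacy classes, which map to the two \emph{different} nontrivial elements of $\mathbb{Z}/3\mathbb{Z}$. The $k=0$ set contains $-I$ and the order-$4$ elements (trivial image) together with the order-$6$ elements (nontrivial image). There is a second failure: if the degree-$\ell$ common subfield lies inside $K_Q$, the fibre-product constraint does not involve the Kummer component. Then, for fixed $R$, all $\ell^{w}$ values of $a$ occur with density $\ell^{-w}$. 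So the second bullet cannot be proved as stated, and ``splitting $\mathcal{P}_k$'' would only prove a different statement. This is harmless, because Condition~\ref{condition} requires $\ell$ to be coprime to $6q$, so $\ell\ge5$ and only the first bullet is ever in force. The paper's own proof likewise just asserts a product decomposition for $\ell=2,3$ without addressing these points.
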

\begin{proof}
    The proposition follows from the effective Chebotarev density theorem (Theorem \ref{thm:effective_chebotarev}), as elaborated in \cite[Corollary 4.17]{Park2025}. When $\ell \ge5$, consider the Galois extension $L/K$, where
    \begin{equation*}
        L = K_Q\big(E[\ell], \sqrt[\ell]{h_1}, \cdots, \sqrt[\ell]{h_w}\big).
    \end{equation*}
    Here we crucially use the fact that if $\ell \geq 5$, then $\mathrm{SL}_2(\mathbb{F}_\ell)$ has no normal subgroup of index $\ell$, which implies
    \begin{equation*}
        \mathrm{Gal}(L/K) = \mathrm{Gal}(K_Q/K) \times \mu_\ell^{\oplus w} \times \mathrm{SL}_2(\mathbb{F}_\ell).
    \end{equation*}
    The constant $\widehat{C}_{E,Q,q,\ell}$ arises from an application of the effective Chebotarev density theorem and the Riemann--Hurwitz theorem to the field extension $L/K$: 
    \begin{equation*}
        \widehat{C}_{E,Q,q,\ell} = 16(\# \mathrm{Gal}(L/K) + g_L) \leq 16 q^m \ell^{w} (\ell^3-\ell) (1 + (\deg \Delta_E + m + w)).
    \end{equation*}
    For the other cases, we use the fact that there exists a unique normal subgroup of $\mathrm{SL}_2(\mathbb{F}_\ell)$ of index $\ell$, which implies
    \begin{equation*}
        \mathrm{Gal}(L/K) = \mathrm{Gal}(K_Q/K) \times \mu_\ell^{\oplus w-1} \times \mathrm{SL}_2(\mathbb{F}_\ell).
    \end{equation*}
    Applying the effective Chebotarev density theorem to the field extension $L/K$, we arrive at the asserted inequality with the same constant $\widehat{C}_{E,Q,q,\ell}$ as defined above.
\end{proof}

With Proposition \ref{proposition:equidistribution} at hand, we examine variations in the dimensions of local Selmer groups with respect to changing local conditions at a single place in arithmetic progression. We first discuss the Chebotarev conditions that determine the values of $t_\chi(\mathfrak{v})$.

\begin{proposition}[c.f. {\cite[Proposition 5.4]{Park2025}}] \label{prop:tchiv_density}
    Let $E$ be an elliptic curve over $K$ satisfying Condition \ref{condition}. Fix $Q\in\M(m)$ of degree $m$ and a square-free product of places $\sigma$ coprime to elements in $\Sigma_E$. Fix $\chi \in \Omega_\sigma$ and define $d_{k,j}$ by
    \begin{equation*}
        d_{k,j} := \begin{cases}
            1 &\text{ if } k =0, j = 0, \\
            1 - {\ell^{-\mathrm{rk}(\chi)}} &\text{ if } k=1, j = -1, \\
            {\ell^{-\mathrm{rk}(\chi)}} &\text{ if } k= 1, j = 1, \\
            1 - (\ell+1) {\ell^{-\mathrm{rk}(\chi)}} + \ell \cdot {\ell^{-2 \cdot \mathrm{rk}(\chi)}} &\text{ if } k=2, j=-2, \\
            (\ell + 1) {\ell^{-\mathrm{rk}(\chi)}} - (\ell+1) {\ell^{-2 \cdot \mathrm{rk}(\chi)}} &\text{ if } k=2, j=0, \\
            {\ell^{-2 \cdot \mathrm{rk}(\chi)}} &\text{ if } k=2, j=2. \\
            0 &\text{ otherwise }.
        \end{cases}
    \end{equation*}
    Let $i > (12 + 2 \max_{\chi \in \Omega_E} \mathrm{rk}(\chi) + 6 \# \Sigma_E(\sigma)) \log \ell$ be a positive integer. Let $R$ be a polynomial of degree at most $m-1$ such that $\#\mathcal{P}_k(i;Q,R) \neq 0$. Then there exists a constant $C_{E,Q,q,\ell} > 0$ depending on $E$, $Q$, $q$, and $\ell$ such that
    \begin{equation*}
        \left| \frac{\#\{\mathfrak{v} \in \mathcal{P}_k(i;Q,R): \mathfrak{v} \not\in \Sigma_E(\sigma) \text{ and } t_\chi(\mathfrak{v}) = j\}}{\#\{\mathfrak{v} \in \mathcal{P}_k(i;Q,R) : \mathfrak{v} \not\in \Sigma_E(\sigma)\}} - d_{k,j} \right| < C_{E,Q,q,\ell} \cdot \ell^{3 \# \Sigma_E(\sigma)} q^{-i/2}.
    \end{equation*}
\end{proposition}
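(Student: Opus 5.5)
The value of $t_\chi(\mathfrak{v})$ is governed by a Chebotarev condition in a finite Galois extension. We follow \cite[Proposition 5.4]{Park2025} and add the ray class field $K_Q$ to encode the congruence $\mathfrak{v}\equiv R \pmod Q$.

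\textbf{The governing extension.} Let $S:=\mathrm{Sel}(E[\ell],\chi)$, an $\mathbb{F}_\ell$-space of dimension $r:=\mathrm{rk}(\chi)$. Every $c\in S$ is a class in $H^1_{\et}(K,E[\ell])$ that is unramified outside $\Sigma_E(\sigma)$. Let $M_\chi$ be the compositum of $K(E[\ell])$ with the fields cut out by the restrictions of all $c\in S$ to $\mathrm{Gal}(\overline{K}/K(E[\ell]))$. Under Condition \ref{condition} (in particular the $\mathrm{SL}_2(\mathbb{F}_\ell)$ image, so $H^1(\mathrm{SL}_2(\mathbb{F}_\ell),E[\ell])=0$ for $\ell\ge5$, with the usual modification for small $\ell$), restriction gives an injection
\[S\hookrightarrow \mathrm{Hom}_{\mathrm{Gal}}\bigl(\mathrm{Gal}(\overline{K}/K(E[\ell])),E[\ell]\bigr).\]
Hence
\[\mathrm{Gal}(M_\chi/K)\cong \mathrm{Hom}(S,E[\ell])\rtimes \mathrm{SL}_2(\mathbb{F}_\ell)\cong E[\ell]^{r}\rtimes \mathrm{SL}_2(\mathbb{F}_\ell).\]

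\textbf{Reading off $t_\chi(\mathfrak{v})$.} For $\mathfrak{v}\notin\Sigma_E(\sigma)$, the localization $\mathrm{loc}_{\mathfrak{v}}(c)\in H^1(\mathcal{O}_{K_\mathfrak{v}},E[\ell])\cong E[\ell]/(\mathrm{Frob}_\mathfrak{v}-1)E[\ell]$ is the image of $c(\mathrm{Frob}_\mathfrak{v})$. Write $\mathrm{Frob}_\mathfrak{v}=(\phi,\tau)$ with $\phi\in\mathrm{Hom}(S,E[\ell])$ and $\tau\in \mathrm{SL}_2(\mathbb{F}_\ell)$, where $\dim E[\ell]^{\tau}=k$. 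Then $t_\chi(\mathfrak{v})$ is the rank of the composite $S\xrightarrow{\phi}E[\ell]\to E[\ell]/(\tau-1)E[\ell]$.

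The target quotient is $k$-dimensional. As $\phi$ ranges uniformly over $\mathrm{Hom}(S,E[\ell])$, the composite is a uniformly random linear map $\mathbb{F}_\ell^r\to\mathbb{F}_\ell^k$. Its rank distribution is:
\begin{itemize}
\item for $k=1$: rank $0$ with probability $\ell^{-r}$;
\item for $k=2$: rank $0$ with probability $\ell^{-2r}$, rank $2$ with probability $(1-\ell^{-r})(1-\ell^{1-r})$.
\end{itemize}
This recovers the constants $d_{k,j}$. The index $j$ in the statement appears to encode $t_\chi(\mathfrak{v})$ through the parity convention of \cite{Park2025}, i.e.\ $j=k-2t$ up to sign. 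We match that convention, and for $k=0$ we get $t=0$ always.

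\textbf{Adding the congruence condition.} Set $L:=K_Q M_\chi$. We need $K_Q\cap M_\chi=K$. The abelianization of $E[\ell]^r\rtimes\mathrm{SL}_2(\mathbb{F}_\ell)$ is trivial for $\ell\ge5$, and $K_Q/K$ is abelian, so the intersection is trivial. For $\ell=2,3$ we argue separately, using the remark after the definitions and the hypothesis $\#\mathcal{P}_k(i;Q,R)\ne0$. Then
\[\mathrm{Gal}(L/K)\cong \mathrm{Gal}(K_Q/K)\times \mathrm{Gal}(M_\chi/K).\]
The condition $\mathfrak{v}\in\mathcal{P}_k(i;Q,R)$ with $t_\chi(\mathfrak{v})=j$ is a union of conjugacy classes $S_j$ in this group. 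The condition $\mathfrak{v}\in\mathcal{P}_k(i;Q,R)$ alone is a union $S'$. By the uniformity in $\phi$ above, $\#S_j/\#S'=d_{k,j}$.

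The constant field of $L$ is $\mathbb{F}_q$. Indeed, $K_Q$ is geometric by \cite[Theorem 9.25]{Rosen02}, and $M_\chi$ is geometric because $\mu_\ell\subseteq K$ and the Galois group surjects onto $\mathrm{SL}_2(\mathbb{F}_\ell)$ with no constant quotient.

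\textbf{Applying Theorem \ref{thm:effective_chebotarev}.} We obtain
\[\left|\frac{\#\{\mathfrak{v}\in\mathcal{P}(i):\mathrm{Frob}_\mathfrak{v}\in S_j\}}{\#\{\mathfrak{v}\in\mathcal{P}(i):\mathrm{Frob}_\mathfrak{v}\in S'\}}-d_{k,j}\right|<16\,(\#G+g_L)\,q^{-i/2}.\]
Places in $\Sigma_E(\sigma)$ do not matter: for $i$ this large they have wrong degree or contribute $O(q^{-i/2})$.

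It remains to bound $\#G+g_L$. We have
\[\#G\le q^m\ell^{2r}(\ell^3-\ell).\]
By Riemann--Hurwitz with tame ramification only at $\Sigma_E(\sigma)\cup\{Q,\infty\}$,
\[g_L\ll \#G\,(d_{\Sigma_E(\sigma)}+m).\]
We then need $r\le \max_{\Omega_E}\mathrm{rk}+O(\#\Sigma_E(\sigma))$. This follows because each place of $\sigma$ changes the Selmer rank by at most $2$, via the parity and comparison results of \cite{Park2025}. Hence $\ell^{2r}\ll_E \ell^{3\#\Sigma_E(\sigma)}$, giving the claimed error term with $C_{E,Q,q,\ell}$.

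The lower bound on $i$ is exactly what is needed for the hypotheses $q^{i/2}-q^{i/4}>2(\#G+g_L)$ and $i\ge 2\log(8(\#G+g_L))$ of the Chebotarev theorem.

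\textbf{Main obstacle.} The main obstacle is the disjointness and the precise Galois group computation for $\ell=2,3$, where $\mathrm{SL}_2(\mathbb{F}_\ell)$ has abelian quotients that could meet $K_Q$. This is where the hypothesis $\#\mathcal{P}_k(i;Q,R)\ne0$ and the case distinction of Proposition \ref{proposition:equidistribution} will be used: the intersection only restricts which pairs $(R,k)$ occur, and it leaves the conditional distribution of $\phi$ uniform.
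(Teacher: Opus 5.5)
Your argument is the paper's argument. You take the field cut out by $\mathrm{Sel}(E[\ell],\chi)$ over $K(E[\ell])$ (the paper's $F_{\sigma,\chi}$, via \cite[Proposition 9.4]{KMR14}), with group $E[\ell]^{r}\rtimes\mathrm{SL}_2(\mathbb{F}_\ell)$. You read off $t_\chi(\mathfrak{v})$ as the rank of a uniformly random map $\mathbb{F}_\ell^{r}\to\mathbb{F}_\ell^{k}$, which correctly gives $d_{k,j}$ with $j=k-2t_\chi(\mathfrak{v})$ (the only sensible reading of the statement's index). Then you take the compositum with $K_Q$ and apply Theorem \ref{thm:effective_chebotarev}.

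Your observation that $\mathrm{Gal}(M_\chi/K)$ has trivial abelianization supplies the disjointness from $K_Q$ that the paper leaves implicit. Condition \ref{condition} takes $\ell$ prime to $6q$, so $\ell\ge5$ and the $\ell=2,3$ discussion is unnecessary. One small repair: ``$K_Q$ and $M_\chi$ are both geometric'' does not by itself make $L$ geometric, since a compositum of geometric extensions can contain a constant extension. Argue instead that $\mathrm{Gal}(L/K)^{\mathrm{ab}}=\mathrm{Gal}(K_Q/K)$, so any constant subextension of $L$ lies in $K_Q$.

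\textbf{The gap is in the last step.} Your own correct estimates are
$\#G\le q^m\ell^{2r}(\ell^3-\ell)$, $g_L\ll\#G\,(d_{\Sigma_E(\sigma)}+m)$ and $r\le\max_{\Omega_E}\mathrm{rk}+2\#\{v\mid\sigma\}$. Fed into Theorem \ref{thm:effective_chebotarev}, they give an error $\ll_{E,Q,q,\ell}\ell^{4\#\Sigma_E(\sigma)}\bigl(d_{\Sigma_E(\sigma)}+1\bigr)q^{-i/2}$, not $C_{E,Q,q,\ell}\,\ell^{3\#\Sigma_E(\sigma)}q^{-i/2}$, for two reasons.
\begin{itemize}
\item A rank increase of $2$ per place of $\sigma$ costs a factor $\ell^{4}$ in $\ell^{2r}$, not $\ell^3$.
\item The degree sum $d_{\Sigma_E(\sigma)}$ is not bounded by any function of $\#\Sigma_E(\sigma)$.
\end{itemize}
For the same reason, the stated lower bound on $i$ does not imply the hypotheses of Theorem \ref{thm:effective_chebotarev}. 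Also, Riemann--Hurwitz is not tame at the primes of $Q$ when $Q$ is not squarefree, though that only changes constants depending on $Q$.

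The paper reaches the stated shape only by writing $\#\Sigma_E(\sigma)$ instead of the degree sum in Riemann--Hurwitz, and by using a smaller exponent in its bound on $\#\mathrm{Gal}(F_{\sigma,\chi}K_Q/K)$. So its proof has the same defect. What the method actually proves is the weaker bound above (the inequality being trivial whenever its right-hand side exceeds $1$). That suffices for every later use: in Proposition \ref{prop:key-prop} one has $d_{\Sigma_E(\sigma)}\le n+O_E(1)$, $\#\Sigma_E(\sigma)\le\#\Sigma_E+O(\log n)$ and $i>\mathfrak{n}=\lfloor 4(\log n)^2\rfloor$, so the extra factors are harmless there.
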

\begin{proof}
    The proof follows from an adaptation of \cite[Proposition 5.4]{Park2025}, but there are a number of changes in its proof that needs to be made.

    Recall the following restriction morphism of cohomology groups obtained from the inflation-restriction sequence:
    \begin{equation*}
        \mathrm{Res}: H^1_{\et}(K, E[\ell]) \hookrightarrow H^1_{\et}(K(E[\ell]), E[\ell])^{\mathrm{Gal}(K(E[\ell])/K)} \cong \mathrm{Hom}(\mathrm{Gal}(\overline{K}/K(E[\ell])), E[\ell])^{\mathrm{Gal}(K(E[\ell])/K)}.
    \end{equation*}
    The fact that the first map is an injection follows from the assumption that $\mathrm{Gal}(K(E[\ell])/K) \cong \mathrm{SL}_2(\mathbb{F}_\ell)$, i.e., $E[\ell]$ is an irreducible $\mathrm{Gal}(\overline{K}/K)$-module. Using this map, we can identify $c \in \mathrm{Sel}(E[\ell],\chi)$ with a homomorphism $c: \mathrm{Gal}(\overline{K}/K(E[\ell])) \to E[\ell]$.

    Let $F_{\sigma,\chi}$ be the fixed field of the following subgroup:
    \begin{equation*}
        \bigcap_{c \in \mathrm{Sel}(E[\ell],\chi)} \mathrm{Ker}\left( \mathrm{Res}(c) \right) \subseteq \mathrm{Gal}(\overline{K}/K(E[\ell])).
    \end{equation*}
    By \cite[Proposition 9.4]{KMR14}, the field $F_{\sigma,\chi}$ is a Galois extension over $K$ unramified outside of places in $\Sigma_E(\sigma)$ such that
    \begin{equation*}
        \mathrm{Gal}(F_{\sigma,\chi}/K(E[\ell])) \cong E[\ell]^{\oplus \mathrm{rk}(\chi)}.
    \end{equation*}
    By \cite[Proposition 5.4, p. 3298, \textit{Constant field of} $F_{\sigma,\chi}$]{Park2025}, the constant fields of $F_{\sigma,\chi}$ and $K$ are equal to each other. For this we use the condition that $E$ has a place of split multiplicative reduction. 

    Using the techniques of the proof of \cite[Proposition 9.4]{KMR14} and \cite[Proposition 5.4, p. 3298, \textit{Frobenius conjugacy class}]{Park2025}, the non-zero values of $d_{k,j}$ from the statement of the proposition are given by ratios of two subsets $S_{k,j}$, $S'_k$ of $\mathrm{Gal}(F_{\sigma,\chi}/K)$ which are stable under conjugation. In particular, we have $d_{k,j} = \# S_{k,j}/\# S'_k\in[0,1]$. One obtains that
    \begin{equation*}
        \begin{cases}
            \mathfrak{v} \in \mathcal{P}_k(i) &\iff \mathrm{Frob}_\mathfrak{v} \in S'_k, \\
            \dim_{\mathbb{F}_\ell} \mathrm{im} \; \mathrm{loc}_{\mathfrak{v}} = j \text{ and } \mathfrak{v} \in \mathcal{P}_k(i) &\iff \mathrm{Frob}_\mathfrak{v} \in S_{k,j}.
        \end{cases}
    \end{equation*}
    To obtain densities for subsets of primes in arithmetic progression $\mathcal{P}_k(i;Q,R)$, we consider the Galois extension $F_{\sigma,\chi} K_Q$ obtained from a compositum of $F_{\sigma,\chi}$ with the ray class field $K_Q$ with modulus $Q + \infty$. There exist two subsets $S^*_{k,j}$ and $S^{'*}_k$ of $\mathrm{Gal}(F_{\sigma,\chi}K_Q/K)$ which are stable under conjugation such that $\# S^*_{k,j} = \# S_{k,j}$, $\# S'_k = \# S^{'*}_k$, and
    \begin{equation*}
        \begin{cases}
            \mathfrak{v} \in \mathcal{P}_k(i;Q,R) &\iff \mathrm{Frob}_\mathfrak{v} \in S^{'*}_k, \\
            \dim_{\mathbb{F}_\ell} \mathrm{im} \; \mathrm{loc}_{\mathfrak{v}} = j \text{ and } \mathfrak{v} \in \mathcal{P}_k(i;Q,R) &\iff \mathrm{Frob}_\mathfrak{v} \in S^*_{k,j}.
        \end{cases}
    \end{equation*}
    We can now apply Theorem \ref{thm:effective_chebotarev} to the field extension $F_{\sigma,\chi} K_Q$ as outlined in \cite[Proposition 5.4, pp. 3298--3299, \textit{Effective error bounds}]{Park2025} to conclude the proof. The only difference from the case $Q = 1$ is the explicit constant for the error term $C_{E,Q,q,\ell}$ which now depends on $q$ and $Q$. This is because as computed in the proof of Proposition \ref{proposition:equidistribution}, the genus of the field extension $K_Q$ depends on $q$ and $\deg Q=m$. Indeed, we have
    \begin{equation*}
        \# \mathrm{Gal}(F_{\sigma,\chi}K_Q/K) \leq \ell^{\max_{\chi \in \Omega_E} \mathrm{rk}(\chi)} \ell^{2 \# \Sigma_E(\sigma)} (\ell^3-\ell) q^{m}.
    \end{equation*}
    By the Riemann--Hurwitz theorem, the field $F_{\sigma,\chi}K_Q$ has genus
    \begin{align}
    \begin{split}
        g_{F_{\sigma,\chi}K_Q} &\leq \ell^{\max_{\chi \in \Omega_E} \mathrm{rk}(\chi)}\ell^{2 \# \Sigma_E(\sigma)} (\ell^3-\ell)  q^{m} (\# \Sigma_E(\sigma) + m).
    \end{split}
    \end{align}
    Hence one obtains (using $\ell^{2 \# \Sigma_E(\sigma)} \# \Sigma_E(\sigma) \leq \ell^{3 \# \Sigma_E(\sigma)}$)
    \begin{equation*}
        \# \mathrm{Gal}(F_{\sigma,\chi}K_Q/K) + g_{F_{\sigma,\chi}K_Q} \leq \ell^{\max_{\chi \in \Omega_E} \mathrm{rk}(\chi)}  q^{m} (\ell^3-\ell) (m+2) \ell^{3 \# \Sigma_E(\sigma)}.
    \end{equation*}
    We apply Theorem \ref{thm:effective_chebotarev} by setting the field extension as $L := F_{\sigma, \chi}K_Q$, $G := \mathrm{Gal}(F_{\sigma, \chi}K_Q/K)$, $g_L := g_{F_{\sigma, \chi}K_Q}$, and the two subsets as $S^{'} := S^{'*}_k$ and $S := S_{k,j}^*$.
    The proof is complete upon taking
    \begin{equation*}
        C_{E,Q,q,\ell} := 16 \ell^{\max_{\chi \in \Omega_E} \mathrm{rk}(\chi)} q^{m} (\ell^3-\ell) (m+2).
    \end{equation*}
\end{proof}

Having obtained the Chebotarev conditions for $t_\chi(\mathfrak{v})$, we can estimate the probability that $\mathrm{rk}(\chi')$ differs from $\mathrm{rk}(\chi)$ by any given amount as $\chi'$ ranges over elements in $\Omega_{\chi, \mathfrak{v}}$ and $\mathfrak{v}$ ranges over elements in $\mathcal{P}_k(i; Q, R) \setminus \Sigma_E(\sigma)$ for sufficiently large $i$.
\begin{proposition}[c.f. {\cite[Proposition 9.5]{KMR14}}] \label{prop:changes_local_rank}
    Let $E$ be an elliptic curve over $K$ satisfying Condition \ref{condition}. Assume all notations and conditions from Proposition \ref{prop:tchiv_density}. Fix $\chi \in \Omega_\sigma$. Given integers $j$ and $0 \leq k \leq 2$, we define
    \begin{equation*}
        m(k,j) := \begin{cases}
            1 &\text{ if } k = 0, \; j = 0, \\
            1 - \ell^{-\mathrm{rk}(\chi)} &\text{ if } k = 1, \; j = -1, \\
            \ell^{-\mathrm{rk}(\chi)} &\text{ if } k = 1, \; j = 1, \\
            1 - (\ell + 1) \ell^{-\mathrm{rk}(\chi)} + \ell \cdot \ell^{-2 \cdot \mathrm{rk}(\chi)} &\text{ if } k = 2, \; j = -2, \\
            (\ell + 1) \ell^{-\mathrm{rk}(\chi)} + (\ell + 1/\ell) \ell^{-2 \cdot \mathrm{rk}(\chi)} &\text{ if } k = 2, \; j = 0, \\
            1/\ell \cdot \ell^{-2 \cdot \mathrm{rk}(\chi)} &\text{ if } k = 2, \; j = 2, \\
            0 &\text{ otherwise}.
        \end{cases}
    \end{equation*}
    Then for any integer $j$, 
    \begin{equation*}
        \left| \sum_{\substack{\mathfrak{v} \in \mathcal{P}_k(i; Q, R) \\ \mathfrak{v} \not\in \Sigma_E(\sigma) }} \frac{\# \{\chi' \in \Omega_{\chi, \mathfrak{v}} : \; \mathrm{rk}(\chi') - \mathrm{rk}(\chi) = j\}}{\# (\mathcal{P}_k(i; Q, R) \setminus \Sigma_E(\sigma)) \cdot \# \Omega_{\chi, \mathfrak{v}}} - m(k,j) \right| < C_{E,Q,q,\ell} \cdot \ell^{3 \# \Sigma_E(\sigma)} q^{-i/2},
    \end{equation*}
    where the constant $C_{E,Q,q,\ell}$ is the same constant appearing in Proposition \ref{prop:tchiv_density}.
\end{proposition}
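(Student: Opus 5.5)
Combining Proposition \ref{prop:tchiv_density} with the local analysis of \cite[Proposition 9.5]{KMR14}, I will reduce the count to a weighted sum over the values of $t_\chi(\mathfrak{v})$. Fix $\mathfrak{v} \in \mathcal{P}_k(i;Q,R)\setminus \Sigma_E(\sigma)$. The first step is to show that, for each such $\mathfrak{v}$, the quantity
\[
p_{\mathfrak{v}}(j) := \frac{\#\{\chi' \in \Omega_{\chi,\mathfrak{v}} : \mathrm{rk}(\chi') - \mathrm{rk}(\chi) = j\}}{\#\Omega_{\chi,\mathfrak{v}}}
\]
depends only on $k$ and $t := t_\chi(\mathfrak{v})$. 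This is a purely local-global linear algebra fact: $\mathrm{Sel}(E[\ell],\chi)$ and $\mathrm{Sel}(E[\ell],\chi')$ differ only in the local condition at $\mathfrak{v}$. There the unramified subspace $H^1(\mathcal{O}_{K_\mathfrak{v}},E[\ell])$ is replaced by a ramified maximal isotropic subspace $\mathcal{H}^{\chi'}_\mathfrak{v}$ meeting it trivially. Hence, by the standard parity and comparison argument (Poitou--Tate with both conditions maximal isotropic),
\[
\mathrm{rk}(\chi') = \mathrm{rk}(\chi) - t + s,
\]
where $s$ is the dimension of the image of the relaxed Selmer group in $\mathcal{H}^{\chi'}_\mathfrak{v}$.

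The case analysis then runs as follows:
\begin{itemize}
\item If $k=0$, then $H^1$ at $\mathfrak{v}$ vanishes, so $j=0$ always.
\item If $k=1$, the ramified subspace is unique. One gets $j=-1$ when $t=1$ and $j=+1$ when $t=0$.
\item If $k=2$, the $\ell$ ramified subspaces $\alpha_\mathfrak{v}(\cdot)$ are each hit by $\ell-1$ characters in $\Omega_{\chi,\mathfrak{v}}$, equally. If $t=2$ then $j=-2$. If $t=1$ then $j=0$. If $t=0$, the relaxed Selmer group maps onto a $2$-dimensional space (after quotienting), and the line/plane intersections distribute so that $j=2$ for a proportion $1/\ell$ of the ramified subspaces and $j=0$ for the rest.
\end{itemize}
This follows \cite[Proposition 9.5]{KMR14} verbatim, since none of it depends on $Q$.

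Next, I will average over $\mathfrak{v}$:
\[
\frac{1}{\#(\mathcal{P}_k(i;Q,R)\setminus\Sigma_E(\sigma))}\sum_{\mathfrak{v}} p_{\mathfrak{v}}(j) = \sum_{t} c_{k,t}(j)\,\frac{\#\{\mathfrak{v} : t_\chi(\mathfrak{v}) = t\}}{\#\{\mathfrak{v}\}},
\]
where the coefficients $c_{k,t}(j)\in\{0,1/\ell,1-1/\ell,1\}$ come from the local analysis. Here I index $t_\chi$ values according to the $d_{k,j}$ indexing of Proposition \ref{prop:tchiv_density}, with $j=-k$ corresponding to $t=k$. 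I then substitute the densities $d_{k,\cdot}$ from Proposition \ref{prop:tchiv_density}. The main terms combine linearly into $m(k,j)$; for instance, $m(2,2) = \frac{1}{\ell}\ell^{-2\mathrm{rk}(\chi)}$. Since $\sum_t c_{k,t}(j) \le 1$, the error is at most the maximum error of Proposition \ref{prop:tchiv_density}, namely $C_{E,Q,q,\ell}\,\ell^{3\#\Sigma_E(\sigma)}q^{-i/2}$, with the same constant.

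The main obstacle is the $k=2$, $t=0$ local count: showing exactly a $1/\ell$ fraction of ramified conditions yields $j=2$. I would follow KMR's argument that the image of the strict Selmer group in $H^1(K_\mathfrak{v},E[\ell])/H^1(\mathcal{O}_{K_\mathfrak{v}},E[\ell])$ pairs against the relaxed image, so exactly one of the $\ell+1$ isotropic lines (equivalently, one of the $\ell$ ramified planes) is compatible. I would also double-check that the stated $m(2,0)$ is consistent with summing to $1$ (it may contain a sign typo), and adjust the combination accordingly.
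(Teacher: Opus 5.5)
Your route is the paper's. The paper also takes the table of $\mathrm{rk}(\chi')-\mathrm{rk}(\chi)$ in terms of $k$ and $t_\chi(\mathfrak{v})$ from \cite[Proposition 7.2]{KMR14} and \cite[Proposition 5.3]{Park2025}, and writes $m(k,j)$ as a linear combination of the densities $d_{k,\cdot}$ of Proposition~\ref{prop:tchiv_density}. Your reading $j=k-2t$ of the indexing of $d_{k,j}$ is right. Your suspicion about $m(2,0)$ is also right: the combination is $d_{2,0}+(1-\ell^{-1})d_{2,2}=(\ell+1)\ell^{-\mathrm{rk}(\chi)}-(\ell+1/\ell)\ell^{-2\mathrm{rk}(\chi)}$, with a minus sign. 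That sign is what makes $m(2,-2)+m(2,0)+m(2,2)=1$.

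One step fails as written: the error term. You assert $\sum_t c_{k,t}(j)\le 1$, but for $(k,j)=(2,0)$ both $t=1$ (coefficient $1$) and $t=0$ (coefficient $1-1/\ell$) contribute, so the sum is $2-1/\ell$. The triangle inequality applied to the stated bound of Proposition~\ref{prop:tchiv_density} then only gives $(2-1/\ell)\,C_{E,Q,q,\ell}\,\ell^{3\#\Sigma_E(\sigma)}q^{-i/2}$, or $(1+1/\ell)$ times it if you use that the density errors over $t$ sum to zero. To keep the same constant, go back into the proof of Proposition~\ref{prop:tchiv_density}. There, Theorem~\ref{thm:effective_chebotarev} gives an error $16\frac{\#S}{\#S'}(\#G+g_L)q^{-i/2}$, so the error for the class $t$ is at most $d_{k,t}\,C_{E,Q,q,\ell}\,\ell^{3\#\Sigma_E(\sigma)}q^{-i/2}$. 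Summing gives at most $m(k,j)\,C_{E,Q,q,\ell}\,\ell^{3\#\Sigma_E(\sigma)}q^{-i/2}$, and $m(k,j)\le 1$. Alternatively, accept the constant $2C_{E,Q,q,\ell}$, which is harmless where it is used in Proposition~\ref{prop:key-prop}. The paper's one-line proof passes over this point too.

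Your sketch of the $k=2$, $t=0$ count does not work as phrased, since the strict Selmer group localizes to $0$ at $\mathfrak{v}$ by definition. The clean mechanism is as follows. By Poitou--Tate, the image $W$ of the relaxed Selmer group in the $4$-dimensional space $H^1(K_\mathfrak{v},E[\ell])$ is maximal isotropic. Moreover $t=\dim(W\cap H^1(\mathcal{O}_{K_\mathfrak{v}},E[\ell]))$ and $\mathrm{rk}(\chi')-\mathrm{rk}(\chi)=\dim(W\cap\mathcal{H}^{\chi'}_\mathfrak{v})-t$. The maximal isotropic planes form two families of $\ell+1$ planes each; distinct planes in one family meet in $0$, and planes in different families meet in a line. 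The $\ell$ ramified planes are transversal to the unramified one, so they are exactly the other members of its family. Hence $t=0$ forces $W$ to be one of them, and $j=2$ exactly when $\mathcal{H}^{\chi'}_\mathfrak{v}=W$, i.e.\ for $\ell-1$ of the $\ell(\ell-1)$ characters in $\Omega_{\chi,\mathfrak{v}}$.
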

\begin{proof}
    By \cite[Proposition 7.2]{KMR14} and \cite[Proposition 5.3]{Park2025}, we have, for $\chi' \in \Omega_{\chi, \mathfrak{v}}$ with $\mathfrak{v} \in \mathcal{P}_k(i; Q, R) \setminus \Sigma_E(\sigma)$,
    \begin{equation*}
        \mathrm{rk}(\chi') - \mathrm{rk}(\mathrm{\chi}) = \begin{cases}
            -2 &\text{ if } k = 2, t_\chi(\mathfrak{v}) = 2, \\
            0 &\text{ if } k = 2, t_\chi(\mathfrak{v}) = 1, \\
            0 &\text{ if } k = 2, t_\chi(\mathfrak{v}) = 0 \text{ for } (\ell - 1)^2 \text{ many characters } \chi' \in \Omega_{\chi, \mathfrak{v}}, \\
            2 &\text{ if } k = 2, t_\chi(\mathfrak{v}) = 0 \text{ for } (\ell - 1) \text{ many other characters } \chi' \in \Omega_{\chi, \mathfrak{v}}, \\
            -1 &\text{ if } k = 1, t_\chi(\mathfrak{v}) = 1, \\
            1 &\text{ if } k = 1, t_\chi(\mathfrak{v}) = 0, \\
            0 &\text{ if } k = 0.
        \end{cases}
    \end{equation*}
    The proposition follows now from Proposition \ref{prop:tchiv_density} and the computation that
    \begin{equation*}
        m(k,j) = \begin{cases}
            d_{k,j} &\text{ if } k \leq 1, \text{ or } k = 2, j = 0, \\
            d_{k,1} + (1-1/\ell) \cdot d_{k,2} &\text{ if } k=2, j=1, \\
            1/\ell \cdot d_{k,2} &\text{ if } k=2, j=2.
        \end{cases}
    \end{equation*}
\end{proof}

\subsection{Previous results on Markov operators}

Proposition \ref{prop:changes_local_rank} provides estimates for how likely two local Selmer structures obtained from changing twisting data at a single place differ from each other by a prescribed amount. An effective way to perform combinatorial book-keeping of changes in dimensions of local Selmer structures obtained from changing twisting data at multiple places is by building a stochastic model. We will use the Markov operators defined below to model these changes. The interested reader is referred to \cite[Section 6.1]{Park2025} for a more detailed discussion on these operators.

\begin{definition}[The mod $\ell$ Lagrangian Markov operator, {\cite[Definition 6.1]{Park2025}}]\label{defn:LMO}
Given a prime $\ell$, the \emph{mod $\ell$ Lagrangian Markov operator} $M_L = [m_{r,s}]$ on the state space of non-negative integers $\Z_{\geq 0}$ is defined by
\begin{equation*}
    m_{r,s} = \begin{cases}
    1 - \ell^{-r} &\text{ if } s = r-1 \geq 0, \\
    \ell^{-r} &\text{ if } s = r+1, \\
    0 &\text{ else}.
    \end{cases}
\end{equation*}
\end{definition}

\begin{lemma}[c.f. {\cite[Corollary 6.7]{Park2025}}]\label{corollary:uniform_markov}
Let $\mu: \Z_{\geq 0} \to [0,1]$ be a probability distribution on the (countable) state space $\Z_{\geq 0}$. For each prime $\ell$, define the operator
\begin{equation*}
    M := \left( 1 - \frac{\ell}{(\ell^2-1)} \right)I + \frac{1}{\ell} M_L + \frac{1}{(\ell^3-\ell)} M_L^2,
\end{equation*}
where $I$ is the identity operator on $\Z_{\geq 0}$, and let $\rho$ be the stationary probability distribution of the Markov operator given by
\begin{equation*}
    \rho(z) := \lim_{n\to\infty}(M^n\mu)(z)=\prod_{k=0}^\infty \frac{1}{1 + \ell^{-k}} \cdot \prod_{j=1}^z \frac{\ell}{\ell^j - 1}
\end{equation*}
for every $z\in \mathbb{Z}_{\geq 0}$. Then there exist a constant $\gamma_\ell\in[0,1)$, depending on $\ell$, and a constant $c > 0$ such that for all $n \in \mathbb{N}$,
\begin{equation*}
    \sup_{z \in \mathbb{Z}_{\geq 0}} \left| (M^n\mu)(z) - \rho(z) \right| < c \gamma_\ell^n (\mathbb{E}[\ell^\mu] + 1),
\end{equation*}
where $\mathbb{E}[\ell^\mu] := \sum_{z \in \mathbb{Z}_{\geq 0}} \ell^z \mu(z)$.
\end{lemma}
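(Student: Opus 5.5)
The plan is to derive the lemma from the general theory of geometrically ergodic Markov chains on a countable state space: verify reversibility to identify $\rho$, then prove a Foster--Lyapunov drift condition with Lyapunov function $V(z)=\ell^z$ and a minorization on a finite set, and invoke Harris' theorem in its weighted-norm form. Since the statement is essentially \cite[Corollary 6.7]{Park2025}, one could simply cite it. The argument below shows why the shape of the bound, in particular the factor $\mathbb{E}[\ell^\mu]+1$, is natural.

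\emph{Step 1: the stationary law.} I would first check that $M_L$ is a birth--death chain that is reversible with respect to $\rho$. Detailed balance $\rho(r)m_{r,r+1}=\rho(r+1)m_{r+1,r}$ reduces to
\[
\frac{\rho(r+1)}{\rho(r)}=\frac{\ell^{-r}}{1-\ell^{-r-1}}=\frac{\ell}{\ell^{r+1}-1},
\]
which is exactly the ratio given by the product formula for $\rho$. The constant $\prod_{k\ge0}(1+\ell^{-k})^{-1}$ is the normalization, and this follows from a standard $q$-series identity (Euler's identity). Since $M$ is a polynomial in $M_L$ with nonnegative coefficients summing to $1$, it is a stochastic operator, it is also reversible with respect to $\rho$, and so $\rho$ is stationary for $M$. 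Moreover $M$ is irreducible, because $M_L$ is, and aperiodic, because it has positive holding probability $1-\ell/(\ell^2-1)$. Hence $\rho$ is its unique stationary distribution.

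\emph{Step 2: drift.} For $V(z)=\ell^z$ one computes
\[
(M_LV)(r)=(1-\ell^{-r})\ell^{r-1}+\ell^{-r}\ell^{r+1}\le \ell^{-1}V(r)+\ell.
\]
Applying this twice and combining with the identity term gives $MV\le \beta V+b$ for some explicit $\beta<1$ and $b$ depending only on $\ell$. This holds because
\[
\beta=\Bigl(1-\tfrac{\ell}{\ell^2-1}\Bigr)+\tfrac{1}{\ell}\cdot\ell^{-1}+\tfrac{1}{\ell^3-\ell}\ell^{-2}<1 .
\]

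\emph{Step 3: minorization and conclusion.} Choose $z_0$ with $\beta V(z)+b\le \tfrac{1+\beta}{2}V(z)$ for $z>z_0$, so that the drift is strict off the small set $C=\{0,\dots,z_0\}$. On $C$ a Doeblin condition holds: from any $z\le z_0$ the chain reaches $0$ within $z_0$ steps with probability at least some $\epsilon>0$ depending only on $\ell$. Each downward step $r\to r-1$ has probability at least $\tfrac1\ell(1-\ell^{-r})\ge\tfrac1\ell(1-\ell^{-1})$, and holding steps cover parity mismatches. Harris' theorem (in the form of Hairer--Mattingly or Meyn--Tweedie) then yields $\gamma_\ell<1$ and $c>0$ with
\[
\|M^n\mu-\rho\|_{V}\le c\gamma_\ell^n\bigl(\mu(V)+\rho(V)\bigr).
\]
Here $\|\cdot\|_V$ is the $V$-weighted total variation norm, which dominates the supremum norm because $V\ge1$. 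Since $\rho(V)<\infty$ is a constant depending only on $\ell$, it can be absorbed into $c$, and $\mu(V)=\mathbb{E}[\ell^\mu]$. This gives the stated bound.

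The main obstacle is making the constants uniform, in particular the minorization constant $\epsilon$ on the small set. I expect this to be routine here because every transition probability involved is bounded below by an explicit function of $\ell$ alone. If $\mathbb{E}[\ell^\mu]=\infty$ the bound is vacuous, so no further case analysis is needed.
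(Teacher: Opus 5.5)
Your proof is correct. Detailed balance plus Euler's identity pins down $\rho$. The drift is in fact exact, $M_LV=\ell^{-1}V+(\ell-\ell^{-1})$, so $MV=\beta V+b$ with $1-\beta=(\ell^2-\ell+1)/\ell^3>0$, and Harris' theorem with a finite small set then gives the bound once $\rho(V)<\infty$ is absorbed into $c$. The paper does not reprove this lemma; it imports it from \cite[Corollary 6.7]{Park2025}. Its later appeal to geometric ergodicity, together with the bound on $\mathbb{E}[\ell^{\delta_{h_*}}]$ in the proof of Proposition~\ref{prop:global_selmer}, indicates that the cited result is this same $V$-uniform ergodicity statement with $V(z)=\ell^z$, so your route is essentially the intended one.
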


\subsection{Relating Markov operators with local Selmer structures}

We can extend the description of variations in dimensions of local Selmer groups from the set of irreducible polynomials in arithmetic progression $\mathcal{P}_k(i;Q,R)$ to subsets of polynomials $\mathcal{M}(\lambda,\eta;Q,A)$ by using the maps $\phi_{d_a}$ and $\phi_{d^*,j^*,k^*,R^*}$ from previous sections. But we first recall a few more additional definitions.

\begin{definition}[{\cite[Definition 5.7]{Park2025}}]
For each $f \in \mathcal M(n)$, denote by $\overline{f}$, $\overline{f}_*$, and $\overline{f}^*$ the square-free polynomial over $\F_q$ defined by
\begin{align}
\begin{split}
    \overline{f} := \prod_{\substack{g \mid f \\ g \in \mathcal{P}_1 \cup \mathcal{P}_2}} g, \qquad \overline{f}_* := \prod_{\substack{g \mid f_* \\ g \in \mathcal{P}_1 \cup \mathcal{P}_2}} g, \qquad \overline{f}^* := \prod_{\substack{g \mid f^* \\ g \in \mathcal{P}_1 \cup \mathcal{P}_2}} g.
\end{split}
\end{align}
In other words, they are products of irreducible factors of $f$ (and $f_*$ and $f^*$, respectively) of degree greater than $\mathfrak{n}$ which lies in $\mathcal{P}_1$ or $\mathcal{P}_2$.
\end{definition}

\begin{definition}[{\cite[Definition 5.9]{Park2025}}]
For each $f \in \mathcal M(n)$, denote by $\Sigma_f$ the set of places,
    \begin{equation*}
        \Sigma_f := \Sigma_E \cup \{v \in \mathcal{P}: v\mid f_* \}.
    \end{equation*}
Note that if $f \in F_{(n,N),(w,w')}^{(\lambda,\eta)}$, then $\# \Sigma_f = \# \Sigma_E + (w-w')$.
\end{definition}

\begin{definition}[{\cite[Definition 5.10]{Park2025}}]
    Given a polynomial $f \in \mathcal{M}(\lambda, \eta ; Q,A)$ (see Definition \ref{defn:M(lambda,eta)}), we use the shorthand notation $\Omega_{\overline{f}^*}$ for the set of finite Cartesian products of local characters,
    \begin{align}
    \begin{split}
        \Omega_1 &= \prod_{v \in \Sigma_f} \text{Hom}(\text{Gal}(\overline{K}_v/K_v),\mu_\ell), \\
        \Omega_{\overline{f}^*} &= \prod_{v \in \Sigma_f} \text{Hom}(\text{Gal}(\overline{K}_v/K_v),\mu_\ell) \times \prod_{\substack{v \mid f^* \\ v \nmid f_a}} \text{Hom}_{ram}(\text{Gal}(\overline{K}_v/K_v),\mu_\ell),
    \end{split}
    \end{align}
    such that the component $\chi_v$ is ramified if $v \mid f^*$, and we ignore the local characters at any places $v$ dividing the auxiliary factor $f_a$ of $f$. In particular, we enlarge the set $\Sigma$ from Definition \ref{definition:basic_local_def} to include places $v \mid f_*$ and set $\Sigma = \Sigma_f$, even though $\chi_{f,v}$ is ramified at such places.
\end{definition}

\begin{proposition}[c.f.{\cite[Proposition 5.13]{Park2025}}] \label{prop:key-prop}
    Assume the notations and conditions as stated in Definition \ref{defn:important_maps}. Let $E/K$ be an elliptic curve satisfying Condition \ref{condition}. Given $f \in (\phi_{d^*,j^*,k^*,R^*} \circ \phi_{d_a})^{-1}(h)$, define the characters 
    \begin{equation*}
        \psi_f := (\chi_{f,v})_{v \in \Sigma_f(\overline{h}^*)} \in \Omega_{\overline{h}^*}, \qquad \psi'_f := (\chi_{f,v})_{v \in \Sigma_f(\overline{f}^*)} \in \Omega_{\overline{f}^*}.
    \end{equation*}
    Let $\delta_h: \mathbb{Z}_{\geq 0} \to [0,1]$ denote the probability distribution
    \begin{equation*}
        \delta_h(J) := \frac{\#\{f \in (\phi_{d^*,j^*,k^*,R^*} \circ \phi_{d_a})^{-1}(h) : \mathrm{rk}(\psi_f) = J\}}{\# (\phi_{d^*,j^*,k^*,R^*} \circ \phi_{d_a})^{-1}(h)}.
    \end{equation*}
    Denote by $\tilde{k} := \lambda_{d^*,j^*,k^*,R^*} \cdot k^*$. Then for any $n$ such that $n > \exp(\max\{\deg \Delta_E, 3 \log \ell\})$ and $\omega(h) \leq 2 \log n$, there exists a constant $B_{E,Q,q,\ell}$ depending on $E,Q,q,\ell$ such that
    \begin{equation*}
        \left|\frac{\#\{f \in (\phi_{d^*,j^*,k^*,R^*} \circ \phi_{d_a})^{-1}(h) : \mathrm{rk}(\psi'_f) = J\}}{\# (\phi_{d^*,j^*,k^*,R^*} \circ \phi_{d_a})^{-1}(h)} - (M_L^{\tilde{k}} \delta_h)(J) \right| < \lambda_{d^*,j^*,k^*,R^*} \cdot B_{E,Q,q,\ell} \cdot n^{-2 \log n + 6 \log \ell}.
    \end{equation*}
\end{proposition}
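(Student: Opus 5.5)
We prove Proposition \ref{prop:key-prop} by adding the $\lambda^* := \lambda_{d^*,j^*,k^*,R^*}$ places of $\mathrm{Conf}_{\lambda^*}(\mathcal{P}_{k^*}(d^*;Q,R^*))$ one at a time. Each addition is a single step of the form analysed in Proposition \ref{prop:changes_local_rank}. The Markov operator $M_L$ then records the resulting change of rank, and the error terms telescope.

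Parametrize the fibre $(\phi_{d^*,j^*,k^*,R^*}\circ\phi_{d_a})^{-1}(h)$ via the bijection in Definition \ref{defn:important_maps} by pairs (auxiliary factor, unordered $\lambda^*$-tuple $\{\mathfrak{v}_1,\dots,\mathfrak{v}_{\lambda^*}\}$). Pass to ordered tuples in $\mathrm{PConf}_{\lambda^*}$; the factor $\lambda^*!$ cancels in all ratios. For $0\le s\le\lambda^*$ let $\psi^{(s)}_f$ be the restriction of $\chi_f$ to $\Sigma_f(\overline{h}^*\mathfrak{v}_1\cdots\mathfrak{v}_s)$. Thus $\psi^{(0)}_f=\psi_f$ and $\psi^{(\lambda^*)}_f=\psi'_f$ when $k^*\ge1$. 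If $k^*=0$, then $\tilde k=0$ and the claim is trivial, since places in $\mathcal{P}_0$ do not affect $\mathrm{rk}$ by Definition \ref{definition:consecutive_local_char}.

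The key input is that the global character $\chi_f$, restricted to $\Sigma_f(\overline{h}^*)$ and to $\mathfrak{v}_{s+1}$, is governed by $\ell$-th power residue symbols $(\mathfrak{v}_{s+1}/g)_\ell$ and $(g/\mathfrak{v}_{s+1})_\ell$ for the primes $g$ of $h$, together with the auxiliary factor. The auxiliary factor $f_a$ is a product of primes in $\mathcal{P}_0(d_a)$. Its local conditions are irrelevant, but its residue class and its residue symbols at the other places are free. We therefore use it, together with $\ell$-th power reciprocity over $\mathbb{F}_q[t]$ (recall $\mu_\ell\subset K$), to absorb the constraint $f\equiv A \pmod Q$ and the product formula. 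This makes the induced local character $\chi'_{\mathfrak{v}_{s+1}}$ equidistributed over $\Omega_{\psi^{(s)}_f,\mathfrak{v}_{s+1}}$ as the free data vary.

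Quantitatively, apply Proposition \ref{proposition:equidistribution} with $w\le\omega(h)+s\le 2\log n$ primes $h_j$ and $i=d^*>\mathfrak{n}$, or $i=d_a$ for the auxiliary primes. This gives joint equidistribution of the residue-symbol vector among primes in $\mathcal{P}_k(i;Q,R)$ with error $\ll n^{-2\log n+2\log\ell}$. In the cases $\ell=2,3$, where $K_Q(\sqrt[\ell]{h_j})\cap K(E[\ell])\ne K$, the auxiliary places restore uniformity, as in \cite[Proposition 5.13]{Park2025}.

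Combining this equidistribution with Proposition \ref{prop:changes_local_rank}, for fixed $\psi^{(s)}_f$ of rank $r$, the proportion of extensions with $\mathrm{rk}(\psi^{(s+1)}_f)=r+j$ equals $m(k^*,j)$ up to error $C\ell^{3\#\Sigma}q^{-d^*/2}+O(n^{-2\log n+2\log\ell})$. Since $d^*>\mathfrak{n}\approx 4(\log n)^2$, the term $q^{-d^*/2}$ is $\le n^{-2\log n}$. Moreover, $\ell^{3\#\Sigma}\le \ell^{3(\deg\Delta_E+2\log n)}$ is absorbed into $n^{6\log\ell}$ using $n>\exp(\max\{\deg\Delta_E,3\log\ell\})$.

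For $k^*=1$ the transition kernel $m(1,\cdot)$ is exactly a row of $M_L$. For $k^*=2$ the kernel $m(2,\cdot)$ (with $j\in\{-2,0,2\}$) is exactly a row of $M_L^2$; this one checks by computing $(M_L^2)_{r,r\pm2},(M_L^2)_{r,r}$. So one step maps the distribution $\delta^{(s)}$ of $\mathrm{rk}(\psi^{(s)}_f)$ to $M_L^{k^*}\delta^{(s)}$ up to an $\ell^1$-error $\varepsilon\le B n^{-2\log n+6\log\ell}$. Since $M_L$ is stochastic and hence $\ell^1$-contractive, iterating $\lambda^*$ times gives $\|\delta^{(\lambda^*)}-M_L^{\tilde k}\delta_h\|\le\lambda^*\varepsilon$, which is the claim.

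The main obstacle is the equidistribution step. We must show that the local characters of the genuinely global $\chi_f$ at the new place, and their effect on the previously fixed places, are uniform conditionally on the congruence $f\equiv A\pmod Q$ and on distinctness. Uniformity at the level of individual primes is what Proposition \ref{proposition:equidistribution} provides. The congruence, however, is a global condition on the product. I would handle it by counting with the auxiliary factor free and checking that the congruence restriction merely restricts the residue classes of $f_a$'s primes, leaving enough freedom for their residue symbols. The distinctness constraints cost only $O(\lambda^*/\#\mathcal{P}(d^*))$, which is negligible.
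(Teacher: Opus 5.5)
Your argument is correct in outline, but it is organized differently from the paper's proof.

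\textbf{How the paper proceeds.} The paper never runs the chain on the actual fibre. It fixes the initial restriction $\psi$ and passes to ordered tuples $\overline{g}\in\mathrm{PConf}_{\lambda^*}(\mathcal{P}_{k^*}(d^*;Q,R^*))$. It then uses Proposition \ref{proposition:equidistribution} a bounded number of times to show three things:
\begin{enumerate}
\item $\psi_f$ is nearly uniform on $\widetilde{\Omega_{\overline{h}^*}}$;
\item given $\psi_f=\psi$, the tuple $\overline{g}$ is nearly uniform;
\item given $(\psi,\overline{g})$, $\psi'_f$ is nearly uniform over all extensions of $\psi$ in $\Omega_{\overline{g}\cdot\overline{h}^*}$.
\end{enumerate}
This reduces everything to counting extensions in a product model, where the characters at the new places are independent and uniform. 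Applying Proposition \ref{prop:changes_local_rank} $\lambda^*$ times there yields $M_L^{\tilde k}\delta_\psi$; the details are deferred to \cite{Park2025}. The paper then averages over $\psi$ and uses linearity to reach $M_L^{\tilde k}\delta_h$. The factor $\lambda^*$ in the error enters at this last step, through the bound $2\tilde k+1\le 5\lambda^*$ on the number of starting states $J_0$.

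\textbf{How your route compares.} You instead need, at each of the $\lambda^*$ steps, conditional equidistribution of the new coordinate $(\mathfrak{v}_{s+1},\chi_{f,\mathfrak{v}_{s+1}})$ given the history. This is the same joint input, invoked stepwise. In exchange, the iteration and the error accumulation are explicit and self-contained. The factor $\lambda^*$ comes from telescoping via $\ell^1$-contractivity of $M_L$. The empirical law $\delta_h$ serves directly as the initial state, so no separate linearity step is needed. Your check that $m(2,\cdot)$ is a row of $M_L^2$ is right. It requires $m(2,0)=(\ell+1)\ell^{-r}-(\ell+1/\ell)\ell^{-2r}$, so the plus sign printed in Proposition \ref{prop:changes_local_rank} is a typo.

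\textbf{Minor corrections.}
\begin{enumerate}
\item The congruence $f\equiv A\pmod Q$ need not be absorbed by the auxiliary factor. In $\mathcal{M}(\lambda,\eta;Q,A)$ the residue class of every prime, auxiliary ones included, is fixed by the partition. So the congruence holds automatically, and the residue classes of the primes of $f_a$ are not free. You only need equidistribution of residue symbols within each fixed class $\mathcal{P}_k(i;Q,R)$, which Proposition \ref{proposition:equidistribution} supplies through $K_Q$.
\item Condition \ref{condition} forces $\ell\nmid 6q$, so the case $\ell=2,3$ does not arise. Where the paper discusses that case anyway, it obtains uniformity only on the index-$\ell$ subset $\widetilde{\Omega_{\overline{h}^*}}$; the auxiliary places do not restore full uniformity.
\item The factor $\ell^{3\#\Sigma_E}$ cannot be absorbed into $n^{6\log\ell}$: using $\deg\Delta_E<\log n$ would only give $n^{9\log\ell}$. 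Put it into $B_{E,Q,q,\ell}$, as the paper does.
\item The exponent $6\log\ell$ comes from converting the absolute error $n^{-2\log n+2\log\ell}$ of Proposition \ref{proposition:equidistribution} into conditional probabilities. This costs a factor $\#\Omega_{\overline{h}^*}\le\ell^{2\#\Sigma_E}n^{4\log\ell}$.
\end{enumerate}
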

\begin{proof}
    The proof adapts that of \cite[Proposition 5.13]{Park2025}, but we explain what adjustments need to be made. 

    For the sake of notation, we abbreviate $\overline{\lambda} := \lambda_{d^*,j^*,k^*,R^*}$. We recall from Definition \ref{defn:important_maps} that there is a set bijection
    \begin{align*}
        (\phi_{d^*,j^*,k^*,R^*} \circ \phi_{d_a})^{-1}(h) &\cong \mathrm{Conf}_{\overline{\lambda}}(\mathcal{P}_{k^*}(d^*; Q, R^*)) \times \phi_{d_a}^{-1}(h) \\
        &\cong \frac{\mathrm{PConf}_{\overline{\lambda}}(\mathcal{P}_{k^*}(d^*; Q, R^*))}{S_{\overline{\lambda}}} \times \phi_{d_a}^{-1}(h).
    \end{align*}
    We have a natural surjection of degree $\# S_{\overline{\lambda}}$ given by
    \begin{equation*}
        \mathrm{PConf}_{\overline{\lambda}}(\mathcal{P}_{k^*}(d^*; Q, R^*)) \times \phi_{d_a}^{-1}(h) \to (\phi_{d^*,j^*,k^*,R^*} \circ \phi_{d_a})^{-1}(h),
    \end{equation*}
    which is obtained by forgetting the enumeration of irreducible factors of polynomials represented by an element in $\mathrm{Conf}_{\overline{\lambda}}(\mathcal{P}_{k^*}(d^*; Q, R^*))$. Hence, given a polynomial $f \in \phi_{d_a}^{-1}(h) \to (\phi_{d^*,j^*,k^*,R^*} \circ \phi_{d_a})^{-1}(h)$, all the fibers of $f$ with respect to the above surjection have identical Selmer structures $\mathrm{Sel}(E[\ell], \psi'_f)$. This allows us to rewrite the expression for any two non-negative integers $J_0, J$ (where we will use $J_{\overline{\lambda}}$ in place of $J$ for clarification)
    \begin{equation} \label{eqn:key-quantity-1}
        \#\{f \in (\phi_{d^*,j^*,k^*,R^*} \circ \phi_{d_a})^{-1}(h) : \mathrm{rk}(\psi'_f) = J_{\overline{\lambda}}, \mathrm{rk}(\psi_f) = J_0\}
    \end{equation}
    as 
    \begin{align*}
        \# S_{\overline{\lambda}} \cdot (\ref{eqn:key-quantity-1}) = \#\{(g, f_a) \in \mathrm{PConf}_{\overline{\lambda}}(\mathcal{P}_{k^*}(d^*; Q, R^*)) \times \phi_{d_a}^{-1}(h) : \mathrm{rk}(\psi'_f) = J_{\overline{\lambda}}, \mathrm{rk}(\psi_f) = J_0\}.
    \end{align*}
    Given a fixed choice of a Cartesian product of characters $\psi \in \Omega_{\overline{h}^*}$, we consider the expression
    \begin{equation} \label{eqn:key-quantity-2}
        \#\{(g, f_a) \in \mathrm{PConf}_{\overline{\lambda}}(\mathcal{P}_{k^*}(d^*; Q, R^*)) \times \phi_{d_a}^{-1}(h) : \psi_f = \psi, \; \mathrm{rk}(\psi'_f) = J_{\overline{\lambda}}, \mathrm{rk}(\psi) = J_0\}.
    \end{equation}
    Observe that
    \begin{equation*}
        \# S_{\overline{\lambda}} \cdot (\ref{eqn:key-quantity-1}) = \sum_{\psi \in \Omega_{\overline{h}^*}} (\ref{eqn:key-quantity-2}).
    \end{equation*}
    Hence
    % , we can reformulate the following expression appearing in the inequality appearing in the statement of the proposition as
    \begin{align} \label{eqn:double_summation}
    \begin{split}
    & \hspace{14pt} \frac{\#\{f \in (\phi_{d^*,j^*,k^*,R^*} \circ \phi_{d_a})^{-1}(h) : \mathrm{rk}(\psi'_f) = J\}}{\# (\phi_{d^*,j^*,k^*,R^*} \circ \phi_{d_a})^{-1}(h)} \\
    &= \frac{\sum_{J_0 = -\infty}^\infty (\ref{eqn:key-quantity-1})}{\# (\phi_{d^*,j^*,k^*,R^*} \circ \phi_{d_a})^{-1}(h)} = \frac{1}{\# S_{\overline{\lambda}}} \cdot \frac{\sum_{J_0 = -\infty}^\infty \sum_{\psi \in \Omega_{\overline{h}^*}} (\ref{eqn:key-quantity-2})}{\# (\phi_{d^*,j^*,k^*,R^*} \circ \phi_{d_a})^{-1}(h)} \\
    &= \sum_{J_0 = -\infty}^\infty \sum_{\psi \in \Omega_{\overline{h}^*}} \frac{\# \{(g,f_a) \in  \mathrm{PConf}_{\overline{\lambda}}(\mathcal{P}_{k^*}(d^*; Q, R^*)) \times \phi_{d_a}^{-1}(h) : \psi_f = \psi\}}{\# \mathrm{PConf}_{\overline{\lambda}}(\mathcal{P}_{k^*}(d^*; Q, R^*)) \times \phi_{d_a}^{-1}(h)} \\
    & \hspace{60pt} \times \frac{(\ref{eqn:key-quantity-2})}{\# \{(g,f_a) \in  \mathrm{PConf}_{\overline{\lambda}}(\mathcal{P}_{k^*}(d^*; Q, R^*)) \times \phi_{d_a}^{-1}(h) : \psi_f = \psi\}}.
    \end{split}
    \end{align}
    
    Now we simplify the first fraction appearing in the double sum \eqref{eqn:double_summation}. Let $h_1, h_2, \cdots, h_\omega$ be distinct irreducible factors of $h$, and write 
    \begin{equation*}
        \hat{\omega} := \begin{cases}
            \omega(h) &\text{ if } \ell \geq 5 \text{ or } K_Q(\sqrt[\ell]{h_1}, \cdots, \sqrt[\ell]{h_\omega}) \cap K(E[\ell]) = K, \\
            \omega(h) - 1 &\text{ if } \ell \leq 3 \text{ and } K_Q(\sqrt[\ell]{h_1}, \cdots, \sqrt[\ell]{h_\omega}) \cap K(E[\ell]) \neq K.
        \end{cases}
    \end{equation*}
    Let us denote by $\widetilde{\Omega_{\overline{h}^*}}$ the following set of Cartesian products of local characters:
    \begin{equation*}
        \widetilde{\Omega_{\overline{h}^*}} := \left\{ \psi \in \Omega_{\overline{h}^*} : \exists (g, f_a) \in \mathrm{PConf}_{\overline{\lambda}}(\mathcal{P}_{k^*}(d^*; Q, R^*)) \times \phi_{d_a}^{-1}(h) \text{ s.t. } \psi_f = \psi \right\}.
    \end{equation*}
    Proposition \ref{proposition:equidistribution} shows that 
    \begin{equation*}
        \# \widetilde{\Omega_{\overline{h}^*}} = \begin{cases}
            \# \Omega_{\overline{h}^*} &\text{ if } \ell \geq 5, \text{ or } K_Q(\sqrt[\ell]{h_1}, \cdots, \sqrt[\ell]{h_{\omega(h)}}) \cap K(E[\ell]) = K, \\
            (1/\ell) \cdot \# \Omega_{\overline{h}^*} &\text{ if } \ell \leq 3, \text{ and } K_Q(\sqrt[\ell]{h_1}, \cdots, \sqrt[\ell]{h_{\omega(h)}}) \cap K(E[\ell]) \neq K.
        \end{cases}
    \end{equation*}
    Then by Proposition \ref{proposition:equidistribution}, for any $\psi \in \widetilde{\Omega_{\overline{h}^*}}$ we have:
    \begin{equation*}
        \left| \frac{\# \{(g,f_a) \in  \mathrm{PConf}_{\overline{\lambda}}(\mathcal{P}_{k^*}(d^*; Q, R^*)) \times \phi_{d_a}^{-1}(h) : \psi_f = \psi\}}{\# \mathrm{PConf}_{\overline{\lambda}}(\mathcal{P}_{k^*}(d^*; Q, R^*)) \times \phi_{d_a}^{-1}(h)} - \frac{1}{\# \widetilde{\Omega_{\overline{h}^*}}} \right| < \hat{C}_{E,Q,q,\ell} \cdot n^{-2 \log n + 2 \log \ell}.
    \end{equation*}
    Using the trivial bound that (\ref{eqn:key-quantity-2}) $\leq 1$ and the fact that
    \begin{equation*}
         \# \widetilde{\Omega_{\overline{h}^*}} \leq \# \Omega_{\overline{h}^*} = (\ell(\ell-1))^{\# \Sigma_E + \omega(h)} \leq \ell^{2 \# \Sigma_E} \cdot n^{4 \log \ell},
    \end{equation*}
    we obtain
    \begin{align} \label{eqn:double_summation:rewrite1}
    \begin{split}
        & \hspace{15pt} \left|(\ref{eqn:double_summation}) - \sum_{J_0 = -\infty}^\infty \sum_{\psi \in \widetilde{\Omega_{\overline{h}^*}}} \frac{1}{\# \widetilde{\Omega_{\overline{h}^*}}} \cdot \frac{(\ref{eqn:key-quantity-2})}{\# \{(g,f_a) \in  \mathrm{PConf}_{\overline{\lambda}}(\mathcal{P}_{k^*}(d^*; Q, R^*)) \times \phi_{d_a}^{-1}(h) : \psi_f = \psi\}} \right| \\
        &\leq \ell^{2 \# \Sigma_E} \cdot \hat{C}_{E,Q,q,\ell} \cdot n^{-2 \log n + 6 \log \ell}.
    \end{split}
    \end{align}
    
    Next, we deal with the second fraction appearing in the double sum (\ref{eqn:double_summation}). Writing $\overline{g}$ for a fixed choice of an element in $\mathrm{PConf}_{\overline{\lambda}}(\mathcal{P}_{k^*}(d^*; Q, R^*))$, we see that the second fraction is (given fixed choices of $\psi \in \widetilde{\Omega_{\overline{h}^*}}$)
    \begin{align*}
        & \hspace{14pt} \frac{(\ref{eqn:key-quantity-2})}{\# \{(g,f_a) \in  \mathrm{PConf}_{\overline{\lambda}}(\mathcal{P}_{k^*}(d^*; Q, R^*)) \times \phi_{d_a}^{-1}(h) : \psi_f = \psi\}} \\
        &= \sum_{\overline{g} \in \mathrm{PConf}_{\overline{\lambda}}(\mathcal{P}_{k^*}(d^*; Q, R^*))}\frac{\# \{(\overline{g},f_a) \in  \mathrm{PConf}_{\overline{\lambda}}(\mathcal{P}_{k^*}(d^*; Q, R^*)) \times \phi_{d_a}^{-1}(h) : \psi_f = \psi\}}{\# \{(g,f_a) \in  \mathrm{PConf}_{\overline{\lambda}}(\mathcal{P}_{k^*}(d^*; Q, R^*)) \times \phi_{d_a}^{-1}(h) : \psi_f = \psi\}} \\
        & \hspace{50pt} \times \frac{\#\{(\overline{g}, f_a) \in \mathrm{PConf}_{\overline{\lambda}}(\mathcal{P}_{k^*}(d^*; Q, R^*)) \times \phi_{d_a}^{-1}(h) : \psi_f = \psi, \; \mathrm{rk}(\psi'_f) = J_{\overline{\lambda}}, \mathrm{rk}(\psi) = J_0\}}{\# \{(\overline{g},f_a) \in  \mathrm{PConf}_{\overline{\lambda}}(\mathcal{P}_{k^*}(d^*; Q, R^*)) \times \phi_{d_a}^{-1}(h) : \psi_f = \psi\}}.
    \end{align*}
    To handle the second factor in the sum in the last expression, we need to rewrite the difference between $\mathrm{rk}(\psi'_f)$ and $\mathrm{rk}(\psi)$ for a fixed choice of $\overline{g} \in \mathrm{PConf}_{\overline{\lambda}}(\mathcal{P}_{k^*}(d^*; Q, R^*))$ using Cartesian products of local characters. Because elements in $\phi_{d_a}^{-1}(h)$ are primes in $\mathcal{P}_0$, two elements $(\overline{g}, f_{a,1})$ and $(\overline{g}, f_{a,2}) \in \mathrm{PConf}_{\overline{\lambda}}(\mathcal{P}_{k^*}(d^*; Q, R^*))$ with $\psi_{\overline{g} \cdot f_{a,1}} = \psi_{\overline{g} \cdot f_{a,2}})$ also satisfy $\mathrm{rk}(\psi'_{\overline{g} \cdot f_{a,1}}) = \mathrm{rk}(\psi'_{\overline{g} \cdot f_{a,2}})$ if $\psi'_{\overline{g} \cdot f_{a,1}} = \psi'_{\overline{g} \cdot f_{a,2}}$, both of which are elements of $\Omega_{\overline{g} \cdot \overline{h}^*}$. Hence 
    \begin{align*}
        & \hspace{14pt} \frac{(\ref{eqn:key-quantity-2})}{\# \{(g,f_a) \in  \mathrm{PConf}_{\overline{\lambda}}(\mathcal{P}_{k^*}(d^*; Q, R^*)) \times \phi_{d_a}^{-1}(h) : \psi_f = \psi\}} \\
        &= \sum_{\overline{g} \in \mathrm{PConf}_{\overline{\lambda}}(\mathcal{P}_{k^*}(d^*; Q, R^*))} \frac{\# \{(\overline{g},f_a) \in  \mathrm{PConf}_{\overline{\lambda}}(\mathcal{P}_{k^*}(d^*; Q, R^*)) \times \phi_{d_a}^{-1}(h) : \psi_f = \psi\}}{\# \{(g,f_a) \in  \mathrm{PConf}_{\overline{\lambda}}(\mathcal{P}_{k^*}(d^*; Q, R^*)) \times \phi_{d_a}^{-1}(h) : \psi_f = \psi\}} \\
        & \hspace{15pt} \times \sum_{\psi' \in \Omega_{\overline{g} \cdot \overline{h}^*}} \frac{\#\{(\overline{g}, f_a) \in \mathrm{PConf}_{\overline{\lambda}}(\mathcal{P}_{k^*}(d^*; Q, R^*)) \times \phi_{d_a}^{-1}(h) : \psi_f = \psi, \; \psi'_f = \psi', \; \mathrm{rk}(\psi') = J_{\overline{\lambda}}\}}{\# \{(\overline{g},f_a) \in  \mathrm{PConf}_{\overline{\lambda}}(\mathcal{P}_{k^*}(d^*; Q, R^*)) \times \phi_{d_a}^{-1}(h) : \psi_f = \psi\}}.
    \end{align*}
    We apply Proposition \ref{proposition:equidistribution} once again to the ratios in the two sums. The first satisfies
    \begin{align*}
        & \left| \frac{\# \{(\overline{g},f_a) \in  \mathrm{PConf}_{\overline{\lambda}}(\mathcal{P}_{k^*}(d^*; Q, R^*)) \times \phi_{d_a}^{-1}(h) : \psi_f = \psi\}}{\# \{(g,f_a) \in  \mathrm{PConf}_{\overline{\lambda}}(\mathcal{P}_{k^*}(d^*; Q, R^*)) \times \phi_{d_a}^{-1}(h) : \psi_f = \psi\}} - \frac{1}{\# \mathrm{PConf}_{\overline{\lambda}}(\mathcal{P}_{k^*}(d^*; Q, R^*))} \right| \\
        & \leq 2 \cdot \hat{C}_{E,Q,q,\ell} \cdot n^{-2 \log n + 2 \log \ell},
    \end{align*}
    while the second satisfies
    \begin{align*}
        \biggl| \sum_{\psi' \in \Omega_{\overline{g} \cdot \overline{h}^*}} & \frac{\#\{(\overline{g}, f_a) \in \mathrm{PConf}_{\overline{\lambda}}(\mathcal{P}_{k^*}(d^*; Q, R^*)) \times \phi_{d_a}^{-1}(h) : \psi_f = \psi, \; \psi'_f = \psi', \; \mathrm{rk}(\psi'_f) = J_{\overline{\lambda}}\}}{\# \{(\overline{g},f_a) \in  \mathrm{PConf}_{\overline{\lambda}}(\mathcal{P}_{k^*}(d^*; Q, R^*)) \times \phi_{d_a}^{-1}(h) : \psi_f = \psi\}} \\
        &- \frac{\# \{\psi' \in \Omega_{\overline{g} \cdot \overline{h}^*} : (\psi'_v)_{v \in \Sigma_f(\overline{h}^*)} = \psi, \; \mathrm{rk}(\psi') = J_{\overline{\lambda}}\}}{\# \{\psi' \in \Omega_{\overline{g} \cdot \overline{h}^*} : (\psi'_v)_{v \in \Sigma_f(\overline{h}^*)} = \psi\}} \biggr| \leq 2 \cdot \hat{C}_{E,Q,q,\ell} \cdot n^{-2 \log n + 2 \log \ell}.
    \end{align*}
    Combining both expressions, we obtain
    \begin{align*}
            & \biggl| \frac{(\ref{eqn:key-quantity-2})}{\# \{(g,f_a) \in  \mathrm{PConf}_{\overline{\lambda}}(\mathcal{P}_{k^*}(d^*; Q, R^*)) \times \phi_{d_a}^{-1}(h) : \psi_f = \psi\}} \\
            & \; - \frac{1}{\# \mathrm{PConf}_{\overline{\lambda}}(\mathcal{P}_{k^*}(d^*; Q, R^*))} \cdot \sum_{\overline{g} \in \mathrm{PConf}_{\overline{\lambda}}(\mathcal{P}_{k^*}(d^*; Q, R^*))}  \frac{\# \{\psi' \in \Omega_{\overline{g} \cdot \overline{h}^*} : (\psi'_v)_{v \in \Sigma_f(\overline{h}^*)} = \psi, \; \mathrm{rk}(\psi') = J_{\overline{\lambda}}\}}{\# \{\psi' \in \Omega_{\overline{g} \cdot \overline{h}^*} : (\psi'_v)_{v \in \Sigma_f(\overline{h}^*)} = \psi\}} \biggr| \\
            &\leq 8 \cdot \hat{C}_{E,Q,q,\ell}^2 \cdot n^{-2 \log n + 2 \log \ell}.
    \end{align*}
    Notice that the expression
    \begin{align*}
        \frac{\# \{\psi' \in \Omega_{\overline{g} \cdot \overline{h}^*} : (\psi'_v)_{v \in \Sigma_f(\overline{h}^*)} = \psi, \; \mathrm{rk}(\psi') = J_{\overline{\lambda}}\}}{\# \{\psi' \in \Omega_{\overline{g} \cdot \overline{h}^*} : (\psi'_v)_{v \in \Sigma_f(\overline{h}^*)} = \psi\}} = \frac{\# \{\psi' \in \Omega_{\overline{g} \cdot \overline{h}^*} : (\psi'_v)_{v \in \Sigma_f(\overline{h}^*)} = \psi, \; \mathrm{rk}(\psi') = J_{\overline{\lambda}}\}}{(\ell(\ell-1))^{\overline{\lambda}}}
    \end{align*}
    can be computed applying Proposition \ref{prop:changes_local_rank} by $\overline{\lambda}$ many times. To elaborate, denote by $\delta_\psi: \mathbb{Z}_{\geq 0} \to [0,1]$ the probability distribution defined as
    \begin{equation*}
        \delta_\psi(J) := \begin{cases}
            1 &\text{ if } J = \mathrm{rk}(\psi), \\
            0 &\text{ otherwise}.
        \end{cases}
    \end{equation*}
    Then we have
    \begin{align*}
        & \biggl| \sum_{\overline{g} \in \mathrm{PConf_{\overline{\lambda}}}(\mathcal{P}_{k^*}(d^*; Q, R^*))}\frac{\# \{\psi' \in \Omega_{\overline{g} \cdot \overline{h}^*} : (\psi'_v)_{v \in \Sigma_f(\overline{h}^*)} = \psi, \; \mathrm{rk}(\psi') = J_{\overline{\lambda}}\}}{\# \mathrm{PConf}_{\overline{\lambda}}(\mathcal{P}_{k^*}(d^*; Q, R^*)) \cdot \# \{\psi' \in \Omega_{\overline{g} \cdot \overline{h}^*} : (\psi'_v)_{v \in \Sigma_f(\overline{h}^*)} = \psi\}} \\
        & \hspace{15pt} - (M_L^{\tilde{k}} \delta_\psi)(J_{\overline{\lambda}}) \biggr| < 2 \cdot C_{E,Q,q,\ell} \cdot \ell^{3 \# \Sigma_E(\sigma)} \cdot n^{-2 \log n},
    \end{align*}
    where $C_{E,Q,q,\ell}$ is the constant appearing in Proposition \ref{prop:tchiv_density} (A detailed proof of this can be found in \cite[Proof of Proposition 5.13, pp. 3306--3308]{Park2025}). We hence have
    \begin{align*}
            & \biggl| \frac{(\ref{eqn:key-quantity-2})}{\# \{(g,f_a) \in  \mathrm{PConf}_{\overline{\lambda}}(\mathcal{P}_{k^*}(d^*; Q, R^*)) \times \phi_{d_a}^{-1}(h) : \psi_f = \psi\}} - (M_L^{\tilde{k}} \delta_\psi)(J_{\overline{\lambda}}) \biggr| \\
            & \hspace{15pt} \leq (2 \cdot C_{E,Q,q,\ell} \cdot \ell^{3 \# \Sigma_E(\sigma)} + 8 \cdot \hat{C}_{E,Q,q,\ell}^2) \cdot n^{-2 \log n + 2 \log \ell}.
    \end{align*}
    Combining this with (\ref{eqn:double_summation:rewrite1}) yields
    \begin{align*}
        \left| (\ref{eqn:double_summation}) - \sum_{J_0 = -\infty}^\infty \sum_{\psi \in \widetilde{\Omega_{\overline{h}^*}}} \frac{1}{\# \widetilde{\Omega_{\overline{h}^*}}} \cdot (M_L^{\tilde{k}} \delta_\psi)(J_{\overline{\lambda}}) \right| \leq (2C_{E,Q,q,\ell} + 8\hat{C}_{E,Q,q,\ell}^2)\cdot\ell^{3 \# \Sigma_E} n^{-2 \log n + 6 \log \ell}.
    \end{align*}
    By Proposition \ref{proposition:equidistribution}, for any $J_0 \geq 0$ we have
    \begin{equation*}
        \left| \delta_h(J_0) - \sum_{J_0 = -\infty}^\infty \sum_{\psi \in \widetilde{\Omega_{\overline{h}^*}}} \frac{1}{\# \widetilde{\Omega_{\overline{h}^*}}} \cdot \delta_\psi(J_0) \right| < \hat{C}_{E,Q,q,\ell} \cdot n^{-2 \log n + 2 \log \ell}.
    \end{equation*}
    Since there are $2 \tilde{k} + 1$ possible values of $J_0$ which can reach a given state $J_{\overline{\lambda}}$ after the operator $M_L^{\tilde{k}}$ is applied, we obtain
    \begin{equation*}
        \left| (M_L^{\tilde{k}} \delta_h)(J_{\overline{\lambda}}) - \sum_{J_0 = -\infty}^\infty \sum_{\psi \in \widetilde{\Omega_{\overline{h}^*}}} \frac{1}{\# \widetilde{\Omega_{\overline{h}^*}}} \cdot (M_L^{\tilde{k}} \delta_\psi)(J_{\overline{\lambda}}) \right| < (2 \tilde{k} + 1) \cdot \hat{C}_{E,Q,q,\ell} \cdot n^{-2 \log n + 2 \log \ell}.
    \end{equation*}
    Using the fact that $2\tilde{k} + 1 \leq 5 \lambda_{d^*,j^*,k^*,R^*}$ whenever $\lambda_{d^*,j^*,k^*,R^*} \geq 1$ and $k^* \leq 2$, we get
    \begin{align*}
        \left| (\ref{eqn:double_summation}) - (M_L^{\tilde{k}} \delta_h)(J_{\overline{\lambda}}) \right| \leq \lambda_{d^*,j^*,k^*,R^*} \cdot (10 \cdot C_{E,Q,q,\ell} + 40 \cdot \hat{C}_{E,Q,q,\ell}^2) \cdot \ell^{3 \# \Sigma_E} \cdot n^{-2 \log n + 6 \log \ell}.
    \end{align*}
    Taking 
    \[B_{E,Q,q,\ell} = (10 \cdot C_{E,Q,q,\ell} + 40 \cdot \hat{C}_{E,Q,q,\ell}^2) \cdot \ell^{3 \# \Sigma_E}\] 
    finishes the proof.
\end{proof}

\subsection{Global Selmer structures}

In the previous subsection, we saw that Markov models can still be utilized to understand variations in local Selmer structures decorated with conditions pertaining to irreducible polynomials in arithmetic progressions. We proceed to prove the following result on the distribution of global Selmer groups $\mathrm{Sel}_{1-\sigma_f}(A_f/K)$ as $f$ varies over $\mathcal{M}(n, w : N, w'; Q,A) $.

\begin{proposition}[c.f. {\cite[Proposition 6.11]{Park2025}}] \label{prop:global_selmer}
    Let $n > N$, $w' < w < 2 \log n$ be four positive integers, such that $w' = (1-\epsilon)w$ for some small enough $0 < \epsilon < 1$. Fix a monic polynomial $Q$ and a polynomial $A$ of degree less than $\deg Q$. Suppose $n$ satisfies $$n > \max \left\{\exp(\exp(\exp(e))), 6 (\ell^3 + g_{E[\ell]}), \exp(\deg \Delta_E) \right\}.$$ Then there exists a constant $\tilde{B}_{E,Q,q,\ell}$ depending on $E, Q, q, \ell$ and a constant $\gamma_\ell \in (0,1)$ (from Lemma \ref{corollary:uniform_markov}) such that
    \begin{align*}
        & \hspace{15pt} \biggl| \frac{\# \{f \in \mathcal{M}(n, w : N, w'; Q,A) : \dim_{\mathbb{F}_\ell} \mathrm{Sel}_{1-\sigma_f}(E^{\chi_f}/K) = J\}}{\# \mathcal{M}(n, w : N, w'; Q,A)} - \rho(J) \biggr| \\
        &< \tilde{B}_{E,Q,q,\ell} \cdot \left(n^{-2 \log n + 6 \log \ell + 1} + \gamma_\ell^{w'} \cdot n^{4 \epsilon \log \ell} \right).
    \end{align*}
\end{proposition}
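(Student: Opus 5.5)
We decompose $\mathcal{M}(n,w:N,w';Q,A)$ as the disjoint union over splitting partitions $\lambda \in \Lambda^{\mathrm{la}}_{N,w',Q}$ and $\eta \in \Lambda^{\mathrm{for}}_{n-N,w-w',Q}$ of the sets $\mathcal{M}(\lambda,\eta;Q,A)$ from Definition \ref{defn:M(lambda,eta)}. It then suffices to prove the asserted estimate uniformly for each nonempty $\mathcal{M}(\lambda,\eta;Q,A)$ and average. By Remark \ref{remark:local-global-selmer}, $\dim_{\mathbb{F}_\ell}\mathrm{Sel}_{1-\sigma_f}(E^{\chi_f}/K) = \mathrm{rk}((\chi_{f,v})_v)$. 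Places dividing $f_a$ lie in $\mathcal{P}_0$ and do not affect the rank, so this rank equals $\mathrm{rk}(\psi'_f)$ for $\psi'_f \in \Omega_{\overline{f}^*}$.

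\textbf{Step 1 (peeling off factors of $f^*$).} Fix $\lambda,\eta$. Enumerate the indices $(d^*,j^*,k^*,R^*)$ with $d^* \neq d_a$, $j^* \not\equiv 0 \pmod \ell$ and $\lambda_{d^*,j^*,k^*,R^*}\neq 0$. We add these blocks one at a time, starting from $h_0$, the part of $f$ coming from $f_*$ together with the bad places. At each stage we apply Proposition \ref{prop:key-prop} with $h$ the product of the blocks already added. This gives that the distribution of $\mathrm{rk}(\psi'_f)$ over the fibre equals $M_L^{\lambda_{d^*,j^*,k^*,R^*}k^*}$ applied to the previous distribution, up to an error $\lambda_{d^*,j^*,k^*,R^*} B_{E,Q,q,\ell}\, n^{-2\log n+6\log\ell}$. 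The hypothesis $\omega(h)\le 2\log n$ holds because $w<2\log n$.

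Summing over the fibres and over the blocks, the errors add up to at most $w' B_{E,Q,q,\ell}\, n^{-2\log n+6\log\ell} \ll n^{-2\log n+6\log\ell+1}$. The result is that the distribution of ranks on $\mathcal{M}(\lambda,\eta;Q,A)$ is approximately
\[\prod_{(i,k)} M_L^{k\lambda_{i,\cdot,k,\cdot}}\,\mu_{\lambda,\eta},\]
where $\mu_{\lambda,\eta}$ is the rank distribution of the initial local structure supported on $\Sigma_f$.

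\textbf{Step 2 (averaging over partitions to obtain powers of $M$).} Next we average over $\lambda$ with the weights $\#\mathcal{M}(\lambda,\eta;Q,A)/\#\mathcal{M}(n,w:N,w';Q,A)$. By the Chebotarev densities of the Remark following Condition \ref{condition}, a large prime factor in a fixed residue class lies in $\mathcal{P}_0,\mathcal{P}_1,\mathcal{P}_2$ with probabilities $1-\frac{\ell}{\ell^2-1}$, $\frac1\ell$, $\frac1{\ell^3-\ell}$, up to $O(q^{-\mathfrak{n}/2})$. This holds uniformly in $R$, which is exactly where the residue class condition is absorbed. Consequently the mixture of products of $M_L^{k}$ over the $w'$ large primes equals $M^{w'}\mu$ up to an error of size $w' q^{-\mathfrak{n}/2}\ll n^{-2\log n+1}$. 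Here $\mathfrak{n}=\lfloor 4(\log n)^2\rfloor$, so $q^{-\mathfrak{n}/2}\le n^{-2\log n}$.

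The one auxiliary prime class does not count, since it is in $\mathcal{P}_0$ and $M$ already includes the identity weight. Lemma \ref{corollary:uniform_markov} then gives $|M^{w'}\mu(J)-\rho(J)|<c\,\gamma_\ell^{w'}(\mathbb{E}[\ell^\mu]+1)$.

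\textbf{Step 3 (bounding the initial moment), the main obstacle.} The initial distribution $\mu$ is supported on ranks of local structures on $\Sigma_f$, which has $\#\Sigma_E+(w-w')$ places. Each such place can raise the rank by at most $2$, so $\mathrm{rk}\le \max_{\chi\in\Omega_E}\mathrm{rk}(\chi)+2\epsilon w$. Since $w\le 2\log n$, this gives
\[\mathbb{E}[\ell^\mu]\le \ell^{O_E(1)}\,\ell^{2\epsilon w}\ll_E n^{4\epsilon\log\ell}.\]
This produces the term $\gamma_\ell^{w'}n^{4\epsilon\log\ell}$.

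The delicate point, which I expect to be the main obstacle, is justifying the replacement in Step 2. The conditional distribution of Frobenius types of the large primes, given $f\equiv A \pmod Q$, must factor as an independent product uniformly in $R$. This is what lets the partition average collapse to $M^{w'}$. I would handle it by conditioning on the residues $R$ of each factor: the product constraint only restricts the tuple of residues, and within each fixed residue class the type densities are uniform by Theorem \ref{thm:effective_chebotarev} applied to $K_QK(E[\ell])$.

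Collecting the errors and choosing $\tilde B_{E,Q,q,\ell}$ to absorb $B_{E,Q,q,\ell}$, $c$ and the constants coming from $\Sigma_E$ completes the proof.
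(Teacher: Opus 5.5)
Your proposal follows the paper's proof essentially step for step. With $(\lambda,\eta)$ and $f_*$ fixed, you iterate Proposition~\ref{prop:key-prop} over the blocks of $f^*$ to obtain $M_L^{d_\lambda}$ up to $O(n^{-2\log n+6\log\ell+1})$. You then average over $\lambda$ using the residue-uniform Chebotarev densities to produce a power of $M$. Next you apply Lemma~\ref{corollary:uniform_markov} with the moment bound $\ll n^{4\epsilon\log\ell}$ coming from the $w-w'<2\epsilon\log n$ places of $f_*$, and finally you average over $\eta$. Averaging over $f_*$ before $\lambda$, as you do, is harmless: $\mathcal{M}(\lambda,\eta;Q,A)$ is a Cartesian product by Lemma~\ref{lemma:characterization_M(lambda,eta)}, so every choice of $f_*$ carries equal weight.

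Two small corrections. First, the comparison with $\rho$ is valid only after the average over $\lambda$, as your Step~2 does. It does not hold for each $\mathcal{M}(\lambda,\eta;Q,A)$ separately, as your opening paragraph asserts, since $M_L^{d_\lambda}\mu$ alone does not approach $\rho$. Second, your justification for $M^{w'}$ rather than the paper's $M^{w'-1}$ overlooks that the auxiliary factor is forced into $\mathcal{P}_0$ by the definition of $\Lambda^{\mathrm{la}}_{N,w',Q}$. This perturbs the $\lambda$-average by an amount exponentially small in $w'$, not $O(n^{-2\log n+1})$ as you claim. The perturbation is harmless: it is absorbed into the $\tilde{B}_{E,Q,q,\ell}\,\gamma_\ell^{w'}n^{4\epsilon\log\ell}$ term, enlarging $\gamma_\ell$ if necessary, just as the paper absorbs its lost factor of $\gamma_\ell$ via $2c/\gamma_\ell$.
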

\begin{proof}
The proof adapts that of \cite[Proposition 6.11]{Park2025} to our setup, but the subset of primes $\mathcal{P}_k(i)$ gets replaced by $\mathcal{P}_k(i;Q,R)$ for any residues $R$ mod $Q$. The overall strategy is to extend the statistics of local Selmer structures obtained from Proposition \ref{prop:changes_local_rank} by enlarging the domain of global characters in the following fashion:
\begin{enumerate}
    \item Start with statistics of local Selmer structures over a \textit{subset} of elements $f \in M(\lambda, \eta; Q, A)$, given fixed choices of splitting partitions $\lambda \in \Lambda_{N,w',Q}^{\emph{la}}$, $\eta \in \Lambda_{n-N,w-w',Q}^{\emph{for}}$, and a fixed choice of $f_*$ (which we denote by $h_*$). 
    \item Obtain statistics of local Selmer structures over $\bigsqcup_{\lambda} \{f \in M(\lambda, \eta; Q, A) : f_* = h_*\}$, as $\lambda$ varies over $\Lambda_{N,w',Q}^{\emph{la}}$.
    \item Obtain statistics of local Selmer structures over $\bigsqcup_{\lambda} M(\lambda, \eta; Q, A)$ by varying choices of $h_*$.
    \item Obtain statistics of local Selmer structures over $M(n,w : N,w'; Q, A)$, which is identical to $\bigsqcup_{\lambda, \eta} M(\lambda, \eta; Q, A)$ with $\eta$ varying over $\Lambda_{n-N,w-w',Q}^{\emph{for}}$.
\end{enumerate}

Given a global character $\chi_f \in \mathrm{Hom}(\mathrm{Gal}(\overline{K}/K), \mathbb{Z}/\ell \mathbb{Z})$ such that the fixed field of its kernel is $K(\sqrt[\ell]{f})$, and a place $v$, we recall from Remark \ref{remark:local-global-selmer} that
\begin{equation*}
    \mathrm{Sel}_{1-\sigma_f}(E^{\chi_f}/K) = \mathrm{Sel}(E[\ell], (\chi_{f,v})_v).
\end{equation*}
We shall obtain the desired statistics on dimensions of $\mathrm{Sel}_{1-\sigma_f}(E^{\chi_f}/K)$ from statistics on dimensions of local Selmer structures $\mathrm{Sel}(E[\ell], (\chi_{f,v})_v)$ that we obtained from the previous subsection.

Taking four integers $n, N, w, w'$ as in the statement of the proposition, we fix splitting partitions $\lambda \in \Lambda_{N,w',Q}^{\emph{la}}$ and $\eta \in \Lambda_{n-N,w-w',Q}^{\emph{for}}$. Without loss of generality, assume that the subset of polynomials $M(\lambda, \eta; Q, A)$ is nonempty. By Lemma \ref{lemma:characterization_M(lambda,eta)}, there exists a surjection
\begin{equation*}
    \Phi_{\lambda, \eta, Q, A}: M(\lambda, \eta; Q, A) \to \prod_{\hat{i}, \hat{j}, \hat{k}, \hat{R}} \mathrm{Conf}_{\eta_{\hat{i},\hat{j},\hat{k},\hat{R}}}(\mathcal{P}_{\hat{k}}(\hat{i}; Q,\hat{R})),
\end{equation*}
which forgets all the irreducible factors lying in $\prod_{i,j,k,R} \mathrm{Conf}_{\lambda_{i,j,k,R}}(\mathcal{P}_k(i; Q,R))$. By definition, we have
\begin{equation*}
    \Phi_{\lambda, \eta, Q, A} = \left(\underset{\lambda_{d^*,j^*,k^*,R^*}}{\circ} \phi_{d^*,j^*,k^*,R^*} \right) \circ \phi_{d_a}.
\end{equation*}
In other words, $\Phi_{\lambda, \eta, Q, A}$ can be decomposed as the map $\phi_{d_a}$ followed by iterated compositions of maps $\phi_{d^*,j^*,k^*,R^*}$, as one vary over the set of all possible tuples $(d^*,j^*,k^*,R^*)$ such that $\lambda_{d^*,j^*,k^*,R^*} \neq 0$. 

\textbf{Step (1):} Given a fixed element $h_* \in \prod_{\hat{i}, \hat{j}, \hat{k}, \hat{R}} \mathrm{Conf}_{\eta_{\hat{i},\hat{j},\hat{k},\hat{R}}}(\mathcal{P}_{\hat{k}}(\hat{i}; Q,\hat{R}))$, consider the following Cartesian products of local characters, associated probability distribution $\delta_{h_*}: \mathbb{Z}_{\geq 0} \to [0,1]$, and quantity $d_\lambda$:
\begin{align}
    \begin{split}
        \Omega_{\overline{h_*}} &:= \prod_{v \in \Sigma_E} \mathrm{Hom}(\mathrm{Gal}(\overline{K}_v/K_v), \mu_\ell) \times \prod_{\substack{v \mid h_* \\ v \not\in \Sigma_E}} \mathrm{Hom}_{ram}(\mathrm{Gal}(\overline{K}_v/K_v), \mu_\ell), \\
        \delta_{h_*}(z) &:= \frac{\# \{\omega \in \Omega_{\overline{h_*}} : \mathrm{rk}(\omega) = z\}}{\# \Omega_{\overline{h_*}}}, \\
        d_\lambda &:= \sum_{d^*,j^*,R^*} (\lambda_{d^*,j^*,1,R^*} + 2 \cdot \lambda_{d^*,j^*,2,R^*}).
    \end{split}
\end{align}
Note that $w < 2 \log n$ implies $d_\lambda < 2 \log n$.
Given a non-negative integer $J$, consider the ratio
\begin{equation*}
    \frac{\# \{f \in \Phi_{\lambda, \eta, Q, A}^{-1}(h_*): \dim_{\mathbb{F}_\ell} \mathrm{Sel}_{1-\sigma_f}(E^{\chi_f}/K) = J\}}{\# \Phi_{\lambda, \eta, Q, A}^{-1}(h_*)}.
\end{equation*}
Applying Proposition \ref{prop:key-prop} iteratively over the collections of maps $\phi_{d^*,j^*,k^*,R^*}$ where $\lambda_{d^*,j^*,k^*,R^*} \neq 0$, and identifying $\mathrm{Sel}_{1-\sigma_f}(E^{\chi_f}/K)$ as a local Selmer structure, we obtain
\begin{align*}
    \begin{split}
        \biggl| \frac{\# \{f \in \Phi_{\lambda, \eta, Q, A}^{-1}(h_*): \dim_{\mathbb{F}_\ell} \mathrm{Sel}_{1-\sigma_f}(E^{\chi_f}/K) = J\}}{\# \Phi_{\lambda, \eta, Q, A}^{-1}(h_*)} - (M_L^{d_\lambda} \delta_{h_*})(J) \biggr| &< B_{E,Q,q,\ell} \cdot d_\lambda \cdot n^{-2 \log n + 6 \log \ell} \\ &< B_{E,Q,q,\ell} \cdot n^{-2 \log n + 6 \log \ell + 1},
    \end{split}
\end{align*}
where $B_{E,Q,q,\ell} > 0$ is the constant from Proposition \ref{prop:key-prop}.

%\textbf{[TODO: Swap order of Step(2) and Step(3), to allow room for generalization of results for number of fixed prime factors.]}

\textbf{Step (2):} We consider the statistics over the unions of subsets as we vary over $\lambda \in \Lambda_{N, w', Q}^{\emph{la}}$:
\begin{equation*}
    \frac{ \sum_{\lambda \in \Lambda_{N,w',Q}^{\emph{la}}}\# \{f \in \Phi_{\lambda, \eta, Q, A}^{-1}(h_*): \dim_{\mathbb{F}_\ell} \mathrm{Sel}_{1-\sigma_f}(E^{\chi_f}/K) = J\}}{\sum_{\lambda \in \Lambda_{N,w',Q}^{\emph{la}}}\# \Phi_{\lambda, \eta, Q, A}^{-1}(h_*)}.
\end{equation*}
Using the effective Chebotarev density theorem over the field extension $K(E[\ell])/K$, we obtain 
\begin{align*}
    \begin{split}
        & \hspace{15pt} \biggl| \frac{\sum_{\lambda \in \Lambda_{N,w',Q}^{\emph{la}}} \# \{f \in \Phi_{\lambda, \eta, Q, A}^{-1}(h_*): \dim_{\mathbb{F}_\ell} \mathrm{Sel}_{1-\sigma_f}(E^{\chi_f}/K) = J\}}{\sum_{\lambda \in \Lambda_{N,w',Q}^{\emph{la}}} \# \Phi_{\lambda, \eta, Q, A}^{-1}(h_*)} - (M^{w'-1} \delta_{h_*})(J) \biggr| \\
        &\leq \sum_{\lambda \in \Lambda_{N,w',Q}^{\emph{la}}} \frac{\# \Phi_{\lambda, \eta, Q, A}^{-1}(h_*)}{\sum_{\lambda \in \Lambda_{N,w',Q}^{\emph{la}}} \# \Phi_{\lambda, \eta, Q, A}^{-1}(h_*)} \cdot \biggl| \frac{\# \{f \in \Phi_{\lambda, \eta, Q, A}^{-1}(h_*): \dim_{\mathbb{F}_\ell} \mathrm{Sel}_{1-\sigma_f}(E^{\chi_f}/K) = J\}}{\# \Phi_{\lambda, \eta, Q, A}^{-1}(h_*)} \\
        & \hspace{200pt} - (M^{w'-1} \delta_{h_*})(J) \biggr| \\
        &< B_{E,Q,q,\ell} \cdot n^{-2 \log n + 6 \log \ell + 1}.
    \end{split}
\end{align*}
Here, $M$ is the Markov operator defined in Lemma \ref{corollary:uniform_markov}:
\begin{equation*}
    M := \left(1 - \frac{\ell}{\ell^2-1} \right) I + \frac{1}{\ell} M_L + \frac{1}{\ell^3-\ell} M_L^2.
\end{equation*}
As stated in \cite[p. 3315]{Park2025}, the Markov operator $M$ is applied $w'-1$ times, instead of $w'$ times, because we use one of the auxiliary places of $f$ to obtain equidistribution over Cartesian products of local characters over places $v \in \Sigma_f(\overline{f}^*)$.

\textbf{Step (3):} Now consider statistics as we vary over $h_* \in \prod_{\hat{i}, \hat{j}, \hat{k}, \hat{R}} \mathrm{Conf}_{\eta_{\hat{i},\hat{j},\hat{k},\hat{R}}}(\mathcal{P}_{\hat{k}}(\hat{i}; Q,\hat{R}))$. For this, we use geometric ergodicity of Markov operators with stationary distribution
\begin{equation*}
    \rho(z) := \prod_{k=0}^\infty \frac{1}{1+\ell^{-k}} \prod_{j=1}^z \frac{\ell}{\ell^j-1}.
\end{equation*}
from Lemma \ref{corollary:uniform_markov} to simplify $M^{w'-1} \delta_{h_*}$.
Since $w - w' \leq 2\epsilon \log n$, the proof of Proposition \ref{prop:changes_local_rank} (more precisely, \cite[Proposition 7.2]{KMR14} and \cite[Proposition 5.3]{Park2025}) gives the trivial bound
\begin{equation*}
    \mathbb{E}[\ell^{\delta_{h_*}}] \leq \ell^{\max_{\chi \in \Omega_E} \mathrm{rk}(\chi)} \cdot n^{4 \epsilon \log \ell} \leq B_{E,Q,q,\ell} \cdot n^{4 \epsilon \log \ell}.
\end{equation*}
Let $\gamma_\ell$ be the constant appearing in the conclusion of Lemma \ref{corollary:uniform_markov}. Then there exists some constant $c > 0$ such that
\begin{equation*}
    \sup_{z \in \mathbb{Z}_{\geq 0}} \biggl| (M^{w'-1} \delta_{h_*})(z) - \rho(z) \biggr| < 2c \cdot B_{E,Q,q,\ell} \cdot \gamma_\ell^{w'-1} \cdot n^{4 \epsilon \log \ell}.
\end{equation*}
Note that the right hand side of the above inequality is independent of $h_*$. Hence, we can use the above inequality to obtain
\begin{align*}
    \begin{split}
        & \hspace{15pt} \biggl| \frac{\sum_{\lambda \in \Lambda_{N,w',Q}^{\emph{la}}} \# \{f \in \mathcal{M}(\lambda, \eta; Q, A): \dim_{\mathbb{F}_\ell} \mathrm{Sel}_{1-\sigma_f}(E^{\chi_f}/K) = J\}}{\sum_{\lambda \in \Lambda_{N,w',Q}^{\emph{la}}} \# \mathcal{M}(\lambda, \eta; Q, A)} - \rho(J) \biggr| \\
        &\leq \sum_{h_*}\frac{\sum_{\lambda \in \Lambda_{N,w',Q}^{\emph{la}}} \Phi_{\lambda, \eta, Q, A}^{-1}(h_*)}{\sum_{\lambda \in \Lambda_{N,w',Q}^{\emph{la}}} \# \mathcal{M}(\lambda, \eta; Q, A)} \Bigg[ \biggl| \frac{\sum_{\lambda \in \Lambda_{N,w',Q}^{\emph{la}}} \# \{f \in \Phi_{\lambda, \eta, Q, A}^{-1}(h_*): \dim_{\mathbb{F}_\ell} \mathrm{Sel}_{1-\sigma_f}(E^{\chi_f}/K) = J\}}{\sum_{\lambda \in \Lambda_{N,w',Q}^{\emph{la}}} \# \Phi_{\lambda, \eta, Q, A}^{-1}(h_*)} \\
        &\hspace{200pt} - (M^{w'-1} \delta_{h_*})(J) \biggr| + \biggl| (M^{w'-1} \delta_{h_*})(J) - \rho(J) \biggr| \Bigg] \\
        &< B_{E,Q,q,\ell} \left( n^{-2 \log n + 6 \log \ell + 1} + 2c \cdot \gamma_\ell^{w'-1} \cdot n^{4 \epsilon \log \ell} \right) \\
        &< (B_{E,Q,q,\ell} + 2 c / \gamma_\ell)\left( n^{-2 \log n + 6 \log \ell + 1} + \gamma_\ell^{w'} \cdot n^{4 \epsilon \log \ell} \right).
    \end{split}
\end{align*}
Set $\tilde{B}_{E,Q,q,\ell} := B_{E,Q,q,\ell} + 2c / \gamma_\ell$.

\textbf{Step (4):} Lastly, we consider statistics as we range over $\eta \in \Lambda_{n-N,w-w',Q}^{\emph{for}}$. The relation
\begin{equation*}
    \mathcal{M}(n, w: N, w'; Q, A) = \bigsqcup_{\lambda \in \Lambda_{N,w',Q}^{\emph{la}}} \bigsqcup_{\eta \in \Lambda_{n-N,w-w',Q}^{\emph{for}}} \mathcal{M}(\lambda, \eta; Q, A)
\end{equation*}
implies
\begin{align*}
        & \hspace{15pt} \biggl| \frac{\# \{f \in \mathcal{M}(n, w : N, w'; Q,A) : \dim_{\mathbb{F}_\ell} \mathrm{Sel}_{1-\sigma_f}(E^{\chi_f}/K) = J\}}{\# \mathcal{M}(n, w : N, w'; Q,A)} - \rho(J) \biggr| \\
        &\leq \sum_{\eta \in \Lambda_{n-N,w-w',Q}^{\emph{for}}} \frac{\sum_{\lambda \in \Lambda_{N,w',Q}^{\emph{la}}} \# \mathcal{M}(\lambda, \eta; Q, A)}{\# \mathcal{M}(n, w : N, w'; Q, A)} \\
        & \hspace{30pt} \times \biggl| \frac{\sum_{\lambda \in \Lambda_{N,w',Q}^{\emph{la}}} \# \{f \in \mathcal{M}(\lambda, \eta; Q, A): \dim_{\mathbb{F}_\ell} \mathrm{Sel}_{1-\sigma_f}(E^{\chi_f}/K) = J\}}{\sum_{\lambda \in \Lambda_{N,w',Q}^{\emph{la}}} \# \mathcal{M}(\lambda, \eta; Q, A)} - \rho(J) \biggr| \\
        &< \tilde{B}_{E,Q,q,\ell}\left(n^{-2 \log n + 6 \log \ell + 1} + \gamma_\ell^{w'} \cdot n^{4 \epsilon \log \ell} \right),
\end{align*}
proving the proposition.
\end{proof}

\subsection{Proof of main results}
We are now ready to prove Theorem \ref{theorem:main2}.
\begin{proof}[Proof of Theorem \ref{theorem:main2}]
    Recall from Theorem \ref{theorem:decomposition} that
    \begin{align*}
    \begin{split}
        & \hspace{15pt} \#\mathcal{M}(n ; Q, A) - \sum_{w = \lfloor(1-B) \log n \rfloor}^{\lceil(1+B) \log n \rceil} \hspace{5pt} \sum_{w' = \lfloor(1 - \frac{1}{\log \log \log n}) w \rfloor}^{w} \hspace{5pt} \sum_{N = w' \mathfrak{n}}^n \#\mathcal{M}(n,w:N,w'; Q, A) \\
        &\ll_{Q,q} \frac{\# \mathcal{M}(n ; Q, A)}{B^2 \log(2n)}.
    \end{split}
    \end{align*}
    We take $B = \frac{1}{2}$. In context of notations appearing in Proposition \ref{prop:global_selmer}, we set $\epsilon := \frac{1}{\log \log \log n}$. Then
    % one obtains over any subsets of polynomials $\mathcal{M}(n,w:N,w'; Q, A)$ appearing in the summation above,
    \begin{align*}
        & \hspace{15pt} \biggl| \frac{\# \{f \in \mathcal{M}(n, w : N, w'; Q,A) : \dim_{\mathbb{F}_\ell} \mathrm{Sel}_{1-\sigma_f}(E^{\chi_f}/K) = J\}}{\# \mathcal{M}(n, w : N, w'; Q,A)} - \rho(J) \biggr| \\
        &< \tilde{B}_{E,Q,q,\ell} \left( n^{-2 \log n + 6 \log \ell + 1} + \gamma_\ell^{\frac{1}{2} \log n - \frac{1}{2} \frac{\log n}{\log \log \log n}} \cdot n^{4 \frac{\log \ell}{\log \log \log n}} \right) \\
        &\leq 2 \tilde{B}_{E,Q,q,\ell} \cdot n^{\frac{1}{2} \log \gamma_\ell - \frac{1}{2} \frac{\log \gamma_\ell}{\log \log \log n} + \frac{4 \log \ell}{\log \log \log n}}.
    \end{align*}
    Note that the leading exponent of $n$, namely $\frac{1}{2} \log \gamma_\ell$, is negative because $\gamma_\ell \in (0,1)$ as in Lemma \ref{corollary:uniform_markov}. Summing the above inequality over all $w, w', N$, we obtain for any non-negative $J \geq 0$ and sufficiently large $n$,
    \begin{equation*}
        \biggl| \frac{\# \{f \in \mathcal{M}(n; Q, A) : \dim_{\mathbb{F}_\ell} \mathrm{Sel}_{1-\sigma_f}(E^{\chi_f}/K) = J\}}{\# \mathcal{M}(n; Q, A)} - \rho(J) \biggr| \ll_{Q,q} \frac{1}{\log n}.
    \end{equation*}
\end{proof}
 
Theorem \ref{theorem:main1} now follows almost immediately from Theorem \ref{theorem:main2}.
\begin{proof}[Proof of Theorem \ref{theorem:main1}]
    By \cite[Propositions 2.1, 6.3]{MR07}, we have
    \begin{equation*}
        \mathrm{rank}(E/K(\sqrt[\ell]{f})) - \mathrm{rank}(E/K) \leq (\ell - 1) \dim_{\mathbb{F}_\ell} \mathrm{Sel}_{1-\sigma_f}(E^{\chi_f}/K).
    \end{equation*}
    In particular, if $\dim_{\mathbb{F}_\ell} \mathrm{Sel}_{1-\sigma_f}(E^{\chi_f}/K) = 0$, then $\mathrm{rank}(E/K(\sqrt[\ell]{f})) = \mathrm{rank}(E/K)$. The limit inferior (as $n \to \infty$) of the probability of the event $\mathrm{rank}(E/K(\sqrt[\ell]{f})) = \mathrm{rank}(E/K)$
    is bounded below by the asymptotic probability (as $n \to \infty$) that $\dim_{\mathbb{F}_\ell} \mathrm{Sel}_{1-\sigma_f}(E^{\chi_f}/K) = 0$. The latter probability is provided by Theorem \ref{theorem:main2}:
    \begin{equation*}
        \lim_{n \to \infty} \frac{\#\{f \in \mathcal{M}(n;Q,A) : \dim_{\mathbb{F}_\ell} \mathrm{Sel}_{1-\sigma_f}(E^{\chi_f}/K) = 0 \}}{\# \mathcal{M}(n;Q,A)} = \prod_{i=0}^\infty \frac{1}{1+\ell^{-i}}.
    \end{equation*}
    This completes the proof of Theorem \ref{theorem:main1}.
\end{proof}

\nocite{*}
\bibliographystyle{alpha}
\bibliography{references}

\end{document}

%% file: preamble.tex
\usepackage[margin=0.75in]{geometry}
\usepackage[hidelinks]{hyperref}
\usepackage[OT2, T1]{fontenc}
\usepackage[english]{babel}
\usepackage[utf8]{inputenc}
\usepackage{csquotes}
\usepackage[final]{microtype}
\usepackage{lmodern}
\usepackage{amsthm}
\usepackage{amssymb}
\usepackage{mathrsfs}
\usepackage{enumerate}
\usepackage{tikz-cd} % commutative diagrams
\usepackage{tikz}
\usepackage{multicol}
\usepackage{multirow}
\usepackage{tikz-qtree}
\usepackage{rotating}
\usepackage{graphicx}
\usepackage{comment}
\usepackage{enumitem}
\usepackage{manfnt}
\usetikzlibrary{arrows,calc,matrix,trees,arrows.meta,positioning,decorations.pathreplacing,bending}

\newtheorem{theorem}{Theorem}[section]

\newtheorem{proposition}[theorem]{Proposition}
\newtheorem{lemma}[theorem]{Lemma}

\theoremstyle{definition}

\newtheorem{remark}[theorem]{Remark}
\newtheorem{definition}[theorem]{Definition}

\newtheorem{condition}[theorem]{Condition}

\newcommand{\Z}{\mathbb{Z}}
\newcommand{\N}{\mathbb{N}} %Integers or Norm Map
\newcommand{\F}{\mathbb{F}}
\newcommand{\E}{\mathbb{E}}
\newcommand{\R}{\mathbb{R}}

\newcommand{\M}{\mathcal{M}}
\newcommand{\PP}{\mathcal{P}}

\newcommand{\Oh}{\mathcal{O}} %Notation for ring of integers
\newcommand{\Sel}{\mathrm{Sel}} %Selmer Groups
\newcommand{\Gal}{\mathrm{Gal}} %Galois Groups
\newcommand{\et}{\text{\'et}}

\newcommand{\genlegendre}[4]{%
  \genfrac{(}{)}{}{#1}{#3}{#4}%
  \if\relax\detokenize{#2}\relax\else_{\!#2}\fi
}

\DeclareSymbolFont{cyrletters}{OT2}{wncyr}{m}{n}
\DeclareMathSymbol{\Sha}{\mathalpha}{cyrletters}{"58}

%% file: references.bib
@article{Park2025,
      title={On the prime {S}elmer ranks of cyclic prime twist families of elliptic curves over global function fields}, 
      author={Park, {S.\,W.}},
      year={2025},
      journal = {Compositio Mathematica},
      pages = {3277-3320},
      volume = {161},
      number = {12}
}

@article{AP19,
    author = {A. Afshar and S. Porritt},
    title = {The function field {Sathe–Selberg} formula in arithmetic progressions and ‘short intervals’},
    journal = {Acta Arithmetica},
    volume = {187},
    number = {2},
    year = {2019},
    pages = {101--124}
}

@misc{Park2026,
    title = {Secondary terms for first moments of {Selmer} groups of twists of elliptic curves over global function fields},
    author = {Park, {S.\,W.}},
    year = {2026},
    eprint = {2606.14274},
    archivePrefix={arXiv},
    primaryClass={math.NT},
    note = {arXiv:2606.14274}
}

@book{Rosen02,
  author    = {M. Rosen},
  title     = {Number Theory in Function Fields},
  series    = {Graduate Texts in Mathematics},
  volume    = {210},
  publisher = {Springer-Verlag, New York},
  year      = {2002}
}

@article{KMR14,
    author =     "Z. Klagsbrun and B. Mazur and K. Rubin",
    title =      "A {Markov model for Selmer ranks} in families of twists",
    journal =    "Compositio Mathematica",
    volume =     "150",
    pages =      "1077--1106",
    year =       "2014"
}

@article{MR07,
    author = "B. Mazur and K. Rubin",
    title = "Finding large {Selmer} rank via an arithmetic theory of local constants",
    journal = "Annals of Mathematics",
    volume = "166",
    pages = "579--612",
    year = "2007"
}

@article{MRS07,
    author = "B. Mazur and K. Rubin and A. Silverberg",
    title = "Twisting commutative algebraic groups",
    journal = "Journal of Algebra",
    volume = "314",
    number = "1",
    pages = "419--438",
    year = "2007"
}

@article{PR12,
    author =     "B. Poonen and E. Rains",
    title =      "Random maximal isotropic subspaces and {Selmer} groups",
    journal =    "Journal of the American Mathematical Society",
    volume =     "25",
    number =     "1",
    pages =      "245--269",
    year =       "2012"
}

@article{KP27,
    author = "P. Koymans and C. Pagano",
    title = "Hilbert's tenth problem via additive combinatorics",
    journal = "To appear at Journal of the American Mathematical Society",
    volume = "40",
    pages = "195--234",
    year = "2027"
}

@misc{KPS24,
    author = "P. Koymans and C. Pagano and E. Sofos",
    title = "Elliptic fibrations and $3 \cdot 2^k$",
    year = "2024",
    eprint = "2409.02080",
    archivePrefix = "arXiv",
    primaryClass = "math.NT",
    note = "arXiv:2409.02080"
}

@misc{KS26,
    author = "P. Koymans and A. Smith",
    title = "Tamagawa ratios and unbounded Selmer moments",
    year = "2026",
    eprint = "2606.31649",
    archivePrefix = "arXiv",
    primaryClass = "math.NT",
    note = "arXiv:2606.31649"
}

@misc{BM25,
    author = "A. Bartel and A. Morgan",
    title = "Galois module structures and the {Hasse Principle} in twist families via the distribution of {Selmer groups}",
    year = "2025",
    eprint = "2508.14026",
    archivePrefix = "arXiv",
    primaryClass = "math.NT",
    note = "arXiv:2508.14026"
}

@misc{EL23,
    title = {Homological stability for generalized {Hurwitz} spaces and {Selmer} groups in quadratic twist families over function fields},
    author = {J. Ellenberg and A. Landesman},
    year = {2023},
    eprint = {2310.16286},
    archivePrefix={arXiv},
    primaryClass={math.NT},
    note = {arXiv:2310.16286}
}

@misc{LL25,
    title = {The stable homology of {Hurwitz} modules and applications},
    author = {A. Landesman and I. Levy},
    year = {2025},
    eprint = {2510.02068},
    archivePrefix={arXiv},
    primaryClass={math.NT},
    note = {arXiv:2510.02068}
}

@article{Sm22_01,
      title={The distribution of $\ell^{\infty}$-{S}elmer groups in degree $\ell$ twist families {I}}, 
      author={A. Smith},
      year={2026},
      journal = {Journal of the American Mathematical Society},
      volume = {39},
      number = {1},
      pages = {1--72}
}

@article{Sm22_02,
      title={The distribution of $\ell^{\infty}$-{S}elmer groups in degree $\ell$ twist families {II}}, 
      author={A. Smith},
      year={2026},
      journal = {Journal of the American Mathematical Society},
      volume = {39},
      number = {2},
      pages = {453--514}
}
